\def\R{{\mathbb R}}
\def\E{{\mathbb E}}
\theoremstyle{plain}
\newtheorem{thm}{Theorem}[section]
\newtheorem{lem}[thm]{Lemma}
\newtheorem{prop}[thm]{Proposition}
\newtheorem{cor}[thm]{Corollary}
\theoremstyle{definition}
\newtheorem{rem}[thm]{Remark}
\newtheorem{exa}[thm]{Example}
\def\eq#1{{\rm(\ref{#1})}}
\def\Bbb#1{{\mathbb#1}}
\def\R{\Bbb R}
\numberwithin{equation}{section}
\newcommand\blfootnote[1]{%
 \begingroup
 \renewcommand\thefootnote{}\footnote{#1}%
 \addtocounter{footnote}{-1}%
 \endgroup
 }
\begin{document}

\title[Geometric flows and space-periodic solitons on the light-cone]{Geometric flows and space-periodic solitons \\ on the light-cone}

    \author[Y. Yang]{Yun Yang${}^*$}\blfootnote{${}^*$~Corresponding author: yangyun@mail.neu.edu.cn}
    \address{ Yun Yang\newline\indent
     Department of Mathematics, Northeastern University, Shenyang, 110819, P.R. China}
    \email{yangyun@mail.neu.edu.cn}

\begin{abstract}
This paper investigates curve flows on the light-cone in the 3-dimensional Minkowski space. We derive the Harnack inequality for the heat flow and present a detailed classification of space-periodic solitons for a third-order curvature flow. The nontrivial periodic solutions to this flow are expressed in terms of the Jacobi elliptic sine function. Additionally, the closed soliton solutions form a family of transcendental curves, denoted by $\mathrm{x}_{p,q}$, which are characterized by a rotation index $p$ and close after $q$ periods of their curvature functions. The ratio $p/q$ satisfies $p/q \in (\sqrt{2/3}, 1)$, where $p$ and $q$ are relatively prime positive integers. Guided by the classification process, we obtain the analytic solutions to a second-order nonlinear ordinary differential equation.
\end{abstract}

\subjclass[2020]{53A35, 53E40, 35Q51, 34A05.}

\keywords{light-cone;\; soliton;\; curvature flow;\;Killing vector field}


\maketitle
\section{Introduction}
The light-cone represents the boundary in which information can travel, with the speed of light serving as the ultimate limit, and
the idea arises directly from Einstein's theory of special relativity \cite{ein}, which states that the speed of light is constant for all observers, regardless of their motion.
Originating from a specific event at its vertex, the light-cone divides space-time into future and past cones, illustrating the spatial and causal relationships between events \cite{mtw, nak}. In mathematics, Lorentzian manifolds \cite{minkowski1908,vr1} provide a framework for modeling curved space-time, with the light-cone characterized using semi-Riemannian geometry to examine the metric and curvature properties of space-time \cite{nei, sw}.
In fact, the light-cone is a pivotal instrument in comprehending degenerate geometry of space-time manifolds \cite{duggal1996}. It can be envisioned as the light-speed hyperplane at a particular space-time point, profoundly influencing the overall metric structure of space-time. Furthermore, it serves as the foundation for defining geodesics \cite{bf1994}, which delineate the paths along which particles traverse the shortest conceivable routes within a given space-time geometry. Additionally, the light-cone is intricately linked to gravitational fields and the curvature of space-time, as elucidated by Einstein's field equations \cite{ps}.
In physics, the light-cone casts a long shadow, defining the ultimate velocity of information dissemination--the speed of light, a cornerstone of special relativity \cite{rin}.
Through the mathematical framework of light-cones, Hawking and Ellis systematically explores causal structures, singularities, black holes, and cosmology \cite{he}.
The light-cone has been instrumental in proving singularity theorems, analyzing black hole horizons, and gaining insights into the universe's expansion and structure  \cite{hawking1965,hawking1966a,hawking1966b,hawking1967,pen,penrose2020}.

Periodic solutions represent a vital mathematical tool in describing periodic phenomena and unraveling the behavior of nonlinear systems.
In mathematical physics, the study of periodic solutions not only elucidates periodic occurrences in nature but also lays the theoretical groundwork for comprehending nonlinear physical systems \cite{bir,kov,poi}.
These solutions are ubiquitous in fields like mechanics, vibration theory, optics, electromagnetism, chaos, and bifurcations (for more details see \cite{str}).
While exact solutions for numerous physical systems remain elusive, periodic solutions can be procured through numerical methods or approximations,
thereby aiding in the anticipation of system behavior and guiding experimental endeavors \cite{ptvf}.

Geometric flows are generally described by partial differential equations that govern the evolution of geometric structures, such as surfaces, manifolds, or other geometric objects (for more details refer to \cite{brakke1978, evans1991, hamilton1982, olv-isf}). These equations define how the geometric object evolves over time, shaping its properties in response to the flow.
A key concept in the study of geometric flows is that of soliton solutions, which arise from nonlinear partial differential equations. Solitons are stable solutions, particularly in the context of wave propagation. A distinctive property of solitons is that they maintain their shape during propagation \cite{gardner1967, zabusky1965}. This feature has found analogous applications in geometric flows, where solitons often serve as steady solutions that reflect the stability of certain geometric structures under the flow (refer to \cite{andrews2004, hs, perelman2002}).
In the study of geometric flows, one of the most crucial challenges is understanding the nature of singularities. These singularities are intimately connected to the topological structure of the manifold. Investigating them is essential for gaining a deeper understanding of the manifold's geometric and topological properties. Moreover, such investigations are critical for revealing the manifold's multifaceted behavior. Soliton solutions play a significant role in this context, as they provide a prototypical framework for both the emergence and resolution of singularities in geometric flows (see \cite{hui1, hui2} for further details).

In this paper, we primarily focus on the curve flow occurring on the right half of a light-cone, denoted as  $LC^*$, which can be expressed in spherical coordinates as $\displaystyle (\psi, \psi\cos\theta, \psi\sin\theta)$  subject to the constraint $\psi>0$.
Let $\mathbf{r}_0$ represent a curve situated on $LC^*$. We then consider the motion of curves on  $LC^*$ characterized as
\begin{align}\label{lightflow}
\frac{\partial \mathbf{r}}{\partial t}=U\mathbf{r}+W\mathbf{T},\qquad \mathbf{r}(\cdot\;,0)=\mathbf{r}_0,
\end{align}
where  $\mathbf{T}$ denotes the unit tangent vector field. Notably,  $U$ and $W$ remain invariant under Lorentz transformations in the Minkowski space $\E^3_1$.
In Section \ref{sec-ev},  we demonstrate that this flow guarantees that the motion of the curves will consistently remain confined to $LC^*$.

This flow induces the evolutionary equations for the following quantities
\begin{equation*}
  \frac{g_t}{g}=U+W_s,
\end{equation*}
and
\begin{equation*}
  (k_g)_t=U_{ss}-2k_gU+W(k_g)_s,
\end{equation*}
where $g$ is the metric of curve $\mathbf{r}$, which depends on the pseudo-scalar product introduced in Section \ref{sec-back}. Here,  $s$ denotes the corresponding arc length parameter, and
 $k_g$ signifies the curvature of the curve $\mathbf{r}$ on $LC^*$ as defined in \eqref{FS-eq-light}.

Geometric heat flow \cite{hamilton1989} refers to the evolution of a family of submanifolds
    $F:M\times(0,T)\rightarrow N$ that satisfies the partial differential equation
    \begin{equation*}
       \frac{\partial}{\partial t}F=\Delta F,
    \end{equation*}
where $F(\cdot, 0)=F_0:M\rightarrow N$ represents the initial condition. Here, $\Delta$ denotes the Laplace-Beltrami operator on  $(M,g)$, with $g$  being the metric on the manifold $M$.
The curve shortening flow (CSF) is one of the simplest and well-studied models \cite{gage1983,gage1984,gagehamilton1986}.
The higher dimensional analogue of CSF is the mean curvature flow (MCF) \cite{brakke1978,coldingminicozzi2003,hui1}.
The Ricci flow (RF) \cite{hamilton1982, hamilton1995} deforms an initial metric in the direction of its Ricci tensor, and shares many similarities with CSF and MCF.
All these flows are collectively referred to as geometric heat flows. There are some analogical extensions of the geometric heat flow in affine geometry \cite{ast,oqy,st,yang2023maximal}.

When we choose $U=k_g$ and $W=0$, the flow given by equation \eqref{lightflow} simplifies to the heat flow, which is expressed as
\begin{equation}\label{curve-evo-light}
  \frac{\partial \mathbf{r}}{\partial t}=k_g\mathbf{r}.
\end{equation}
Consequently, the evolutionary equations undergo a modification, becoming $\displaystyle \frac{g_t}{g}=k_g$ and $\displaystyle (k_g)_t=(k_g)_{ss}-2k_g^2$.
To gain further insight into this flow, consider the following illustrative example.
\begin{exa}The curves defined by
\begin{equation*}
 \mathbf{r}=\left(\sqrt{-t}, \sqrt{-t}\cos\theta, \sqrt{-t}\sin\theta\right),
\end{equation*}
evolve according to the heat flow. During this evolution, we have
\begin{equation*}
 k_g=\frac{1}{2t},
\end{equation*}
where time $t$ ranges in $(-\infty, 0)$.
\end{exa}
In \cite{silva2023}, several classification results for solitons of this heat flow on the light-cone have been obtained.

The Li-Yau Harnack inequality, introduced in \cite{li1986}, serves as a potent instrument in the study of geometric flows. It offers profound insights into various aspects of these flows, including the behavior of solutions, their regularity, the formation of singularities, and the long-term convergence of the flow dynamics. This inequality has found widespread application in analyzing Ricci flow, mean curvature flow, and numerous other geometric flows, as demonstrated in \cite{andrews1994, bryan2020, chow1991, hamilton1993, hamilton1995h}.
By taking the notation $k=-k_g$, we derive a Harnack inequality specific to the heat flow on the light-cone  $LC^*$ (see Section \ref{subsec-hk} for the proof).
\begin{thm}[Harnack Inequality]\label{thm-hk}
For an immersed solution to the curve flow \eqref{curve-evo-light} on the light-cone $LC^*$ with $k>0$ and defined on the time interval $[t_0,T)$,  we have the following Harnack inequality
\begin{equation*}
  k_t-\frac{k_s^2}{k}-k^2+\frac{k}{2(t-t_0)}\geq0.
\end{equation*}
\end{thm}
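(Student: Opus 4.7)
My plan is to adapt Hamilton's differential Harnack technique to the reaction--diffusion equation obeyed by $k := -k_g$. Substituting into the curvature evolution $(k_g)_t = (k_g)_{ss} - 2 k_g^2$ already derived in the paper gives $k_t = k_{ss} + 2 k^2$; setting $L := \log k$ (valid since $k > 0$) then yields $L_t = L_{ss} + L_s^2 + 2 k$. A short algebraic rearrangement, using $L_s = k_s / k$, shows that after dividing by the positive quantity $k$ the claimed inequality is equivalent to
\begin{equation*}
P := L_{ss} + k \;\geq\; -\frac{1}{2(t - t_0)},
\end{equation*}
which is what I will prove by the maximum principle.

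The main computation is the evolution equation for $P$. With $U = k_g = -k$ and $W = 0$, the evolution formula from Section~\ref{sec-ev} forces the metric to satisfy $g_t / g = -k$, which produces the nontrivial commutator $[\partial_t, \partial_s] = (k/2)\,\partial_s$; applying it twice when differentiating $L_{ss}$ in time is what makes this step nonroutine. Combining the resulting expression with $k_t = k_{ss} + 2 k^2$ and clearing higher $L$-derivatives using $k_s = k L_s$ and $k_{ss} = k L_s^2 + k(P - k)$ to reintroduce $P, P_s, P_{ss}$, I expect the clean identity
\begin{equation*}
P_t = P_{ss} + 2 L_s P_s + 2 P^2 - k P + k^2 + \frac{k L_s^2}{2}.
\end{equation*}

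The proof then finishes by a first-time maximum-principle argument applied to $\tilde P := P + \frac{1}{2(t - t_0)}$, whose evolution reads
\begin{equation*}
\tilde P_t = \tilde P_{ss} + 2 L_s \tilde P_s + 2 \tilde P^2 - \Big(\tfrac{2}{t - t_0} + k\Big)\tilde P + \frac{k}{2(t - t_0)} + k^2 + \frac{k L_s^2}{2}.
\end{equation*}
For $t$ slightly larger than $t_0$ the term $\frac{1}{2(t - t_0)}$ dominates, so $\tilde P > 0$ there. If $\tilde P$ had a first zero at some $(s_1, t_1)$, then $\tilde P_s = 0$, $\tilde P_{ss} \geq 0$, and $\tilde P_t \leq 0$ at that point, whereas the evolution equation forces $\tilde P_t \geq \frac{k}{2(t_1 - t_0)} + k^2 + \frac{k L_s^2}{2} > 0$, a contradiction. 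Hence $\tilde P \geq 0$ on $[t_0, T)$, which is exactly the stated inequality.

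The main obstacle I anticipate is the bookkeeping in deriving $P_t$: the non-commutativity $[\partial_t, \partial_s] = (k/2)\,\partial_s$ generates several correction terms, and the resulting polynomial in $L_s, L_{ss}, L_{sss}, L_{ssss}$ must be reorganised carefully into $P, P_s, P_{ss}$ plus manifestly nonnegative remainders such as $k^2$ and $(k L_s^2)/2$. A secondary point is compactness of the curve for the first-time argument; this is automatic for closed immersed solutions, and for non-closed ones one would need appropriate decay or boundary conditions.
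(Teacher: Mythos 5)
Your overall strategy coincides with the paper's: dividing the claimed inequality by $k>0$ reduces it to a lower bound for the Harnack quantity $P=(\log k)_{ss}+k=\frac{k_{ss}}{k}-\frac{k_s^2}{k^2}+k$, which is exactly the quantity $Q$ the paper works with, and both proofs then compute an evolution equation for this quantity and compare with $-\frac{1}{2(t-t_0)}$, the solution of $q'=2q^2$ blowing down to $-\infty$ at $t_0$. Your reduction step is correct.

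The genuine gap is in the central identity, which you only ``expect'' rather than verify, and which is in fact false. In the paper's conventions $g^2=\langle\mathbf r_p,\mathbf r_p\rangle_L$, i.e. $ds=g\,dp$, so $g_t/g=k_g=-k$ yields the commutator $[\partial_t,\partial_s]=-\frac{g_t}{g}\,\partial_s=k\,\partial_s$, not $\frac{k}{2}\,\partial_s$; your factor $\tfrac12$ would be appropriate only if $g$ were the coefficient of $dp^2$ in the metric, which it is not. Carrying the correct commutator through the computation gives
\begin{equation*}
P_t=P_{ss}+2L_sP_s+2P^2+kL_s^2,
\end{equation*}
which is the paper's identity $Q_t=Q_{ss}+\frac{2k_s}{k}Q_s+2Q^2+\frac{k_s^2}{k}$; your extra terms $-kP+k^2$ and the halved remainder $\frac{k}{2}L_s^2$ are artifacts of the wrong commutator (indeed, your identity and the correct one are compatible only if $kP-k^2+\frac{k}{2}L_s^2\equiv0$, which is no identity). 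So the equation on which your maximum-principle argument rests does not hold. The repair is routine: since $kL_s^2\ge0$, the correct identity gives $P_t\ge P_{ss}+2L_sP_s+2P^2$, and comparison with $q(t)=-\frac{1}{2(t-t_0)}$ yields $P\ge q$ on $(t_0,T)$, which is the theorem. Note, however, that after this correction the leftover reaction term at a first zero of your $\tilde P$ is $kL_s^2$, which may vanish at that point, so the strict-sign contradiction you invoke must be replaced by the standard $\epsilon$-perturbation or the ODE-comparison form of the maximum principle (as the paper does); and, as you rightly flag, one needs closed curves (or suitable behavior at infinity) for that argument.
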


Another noteworthy flow arises when we set $U=(k_g)_s$ and $W=-k_g$, that is,
\begin{equation}\label{soflow}
  \frac{\partial \mathbf{r}}{\partial t}=(k_g)_s\mathbf{r}-k_g\mathbf{T},
\end{equation}
leading to the evolutionary equations
\begin{equation*}
g_t=0,\qquad (k_g)_t=(k_g)_{sss}-3k_g(k_g)_s.
\end{equation*}
These equations embody a KdV (Korteweg-de Vries) equation,  a renowned nonlinear partial differential equation that is extensively utilized to model wave behaviors across various physical scenarios.
In this specific flow, the condition $g_t=0$ signifies that the flow exhibits non-stretching characteristics.
Consequently, we denominate this flow as the third-order centro-affine curvature flow.

In fact, when both  $g_t=0$ and $(k_g)_t=0$ are satisfied, it signifies that the curves retain their original shapes throughout the evolutionary process driven by the flow defined in \eqref{lightflow}.
Hence, these curves constitute a distinctive category of solitons for the flow. Through integration, we obtain
\begin{equation}\label{solitoneq}
   (k_g)_{ss}-\frac{3}{2}k_g^2+\frac{\lambda}{2}=0, \quad \left((k_g)_s\right)^2-k_g^3+\lambda k_g+\mu=0,
 \end{equation}
 where $\lambda$ and $\mu$ are the integration constants.
 Our primary focus is on studying nontrivial periodic soliton solutions. Notably, nontrivial periodic solutions to \eqref{solitoneq} may exist only when
 $\displaystyle \lambda>3\left(\mu/2\right)^{2/3}$.
 In this case, the cubic eqaution $x^3-\lambda x-\mu=0$ have three distinct real root, which we denote as  $x_1<x_2<x_3$.
 Furthermore, the periodic solutions for \eqref{solitoneq} can be expressed in terms of the Jacobi sine function
 \begin{equation*}
  k_g=x_1+(x_2-x_1)\mathrm{sn}^2\left(\frac{\sqrt{x_3-x_1}}{2}s,\;\sqrt{\frac{x_2-x_1}{x_3-x_1}}\right),
\end{equation*}
and the task to identify the periodic solitons is equivalent to solving the equation
\begin{equation}\label{secod-ode}
   \psi_{ss}-\frac{\psi_{s}^2+1}{2\psi}-\left(x_1+(x_2-x_1)\mathrm{sn}^2\left(\frac{\sqrt{x_3-x_1}}{2}s,\;\sqrt{\frac{x_2-x_1}{x_3-x_1}}\right)\right)\psi=0.
\end{equation}
For more detailed information, refer to Section \ref{sec-soliton}.
Equation \eqref{secod-ode} represents a complex second-order ordinary differential equation, and typically, finding its analytical solutions is a challenging task.
However, during our endeavor to classify periodic soliton solutions, we have been fortunate to uncover some analytical solutions to this equation.

In the Minkowski space $\E^3_1$, periodic curves can be created by rotating the points in the initial period around axes that can be either space-like, time-like, or light-like.
Our analysis of the periodic solitons related to \eqref{solitoneq} shows that the nature of their rotation hinges on the value of $\mu$.
Specifically,
\begin{itemize}
  \item When $\mu>0$, the periodic solitons rotate around time-like axis.
  \item When $\mu=0$, the periodic solitons rotate around light-like axis.
  \item When $\mu<0$, the periodic solitons rotate around space-like axis.
\end{itemize}
Importantly,  only when $\mu>0$ can closed solitons be formed .

For $\mu>0$,  an appropriate rotation can align the rotation axis with the $x$-axis,
 allowing us to derive one solution to \eqref{secod-ode} as
 $$ \psi=\frac{-k_g}{\sqrt{\mu}}.$$
Assuming the starting point is at  $\theta=0$, at the endpoint of the initial period, the progression angle $\theta(T,\lambda)$ monotonically increases from $\displaystyle 2\sqrt{2/3}\pi$ to $2\pi$
as $\lambda$ varies from $\displaystyle 3\left(\mu/2\right)^{2/3}$
to $+\infty$.
Here $T$ denotes the period and is dependent on $\lambda$.

Abresch and Langer \cite{al} studied the closed soliton solutions of normalized curve shortening flow by adding a tangential field to maintain constant speed along the curve in Euclidean plane. Analogously, Lopez De Lima and Montenegro \cite{lm} renormalized
the flow by adjusting the tangent component to  preserve constant affine speed.  Notably, in \cite{al,lm}, the rational number $q/p$ was confined to the open interval $\displaystyle (1/2,~\sqrt{2}/{2})$.
In the context of curve flow in centro-affine geometry, explored further in \cite{jiang2023, oqy, yang2023maximal}, the rational ratio  $q/p$ expands its scope to encompass the union of two open intervals $\displaystyle\left(0,\;1/2\right)\cup\left(1/2,\;+\infty\right)$ \cite{niu2025}. Transitioning to the focus of this paper, which examines solitons on the light-cone, we find that the rational ratio $q/p$ resides in the specific open interval  $\left(\sqrt{2/3}, 1\right)$ (refer to Section \ref{subsec-time} for more details).
\begin{thm}
Let $\mathbf{x}$ be a closed solitons for the flow \eqref{soflow} on the light-cone. Then we have the following possibilities for $\mathbf{x}$:
 \begin{itemize}
   \item[(1)] $\mathbf{x}$ is a planar ellipse;
   \item[(2)] $\mathbf{x} = \mathbf{x}_{p,q}$ has rotation index $p$ and closes up in $q$ periods of its curvature function.
    The pair $(p, q)$ is not arbitrary and must be such that $p/q$ is defined in the open interval $\left(\sqrt{2/3}, 1\right)$.
 \end{itemize}
\end{thm}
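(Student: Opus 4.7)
My strategy is to leverage the reduction already carried out in Section~\ref{sec-soliton}: every soliton of the flow \eqref{soflow} satisfies the conservation laws \eqref{solitoneq} with two integration constants $\lambda,\mu$, and its motion under the flow is by a one-parameter group of Lorentz rotations generated by a Killing vector field whose causal character is dictated by the sign of $\mu$. The classification will then split into a trivial branch (constant $k_g$) yielding the ellipses of part (1), and a nontrivial branch (periodic $k_g$) where closure is controlled by a single rotation number.

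First I would dispose of the case $(k_g)_s\equiv 0$. Then $k_g$ is a constant root of $x^3-\lambda x-\mu=0$, the curve has constant geodesic curvature on $LC^{*}$, and the soliton evolution is pure rigid motion. Such a curve is the intersection of $LC^{*}$ with an affine plane transverse to the cone, i.e.\ a planar ellipse; this gives case (1). Next, for $(k_g)_s\not\equiv 0$, periodicity of $k_g$ forces the cubic $x^3-\lambda x-\mu$ to have three distinct real roots $x_1<x_2<x_3$, and $k_g$ is the Jacobi-sn expression displayed just after \eqref{solitoneq}, with arclength period $T=T(\lambda,\mu)$. I would next appeal to the trichotomy in the bulleted list of the excerpt: if $\mu\le 0$, the associated Killing generator is light-like or space-like, so the rigid motion realising the soliton flow has a nontrivial translational component (in $\mu=0$) or is a loxodromic boost (in $\mu<0$); in either situation the image of any closed arc under the resulting one-parameter group cannot match the initial position. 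Hence closed nontrivial solitons can only occur when $\mu>0$.

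For the remaining case $\mu>0$, I would align the time-like rotation axis with the $x$-axis and use the formula $\psi=-k_g/\sqrt{\mu}$ for the $\psi$-coordinate given in the excerpt to reduce closure to a purely angular condition. Let $\theta(T,\lambda)$ denote the angular advance of the curve after one arclength period; by the statement recalled in the excerpt, $\theta(T,\lambda)$ is a strictly increasing continuous function of $\lambda$ with
\begin{equation*}
\lim_{\lambda\searrow 3(\mu/2)^{2/3}}\theta(T,\lambda)=2\sqrt{\tfrac{2}{3}}\,\pi,\qquad \lim_{\lambda\to\infty}\theta(T,\lambda)=2\pi.
\end{equation*}
A nontrivial soliton closes after $q$ curvature periods with rotation index $p$ if and only if $q\,\theta(T,\lambda)=2\pi p$, equivalently $p/q=\theta(T,\lambda)/(2\pi)$. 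The above limits show this ratio ranges over the open interval $\left(\sqrt{2/3},1\right)$; monotonicity combined with the density of rationals then produces, for every coprime pair $(p,q)$ with $p/q$ in this interval, exactly one value of $\lambda$ delivering a closed soliton $\mathbf{x}_{p,q}$, and conversely every closed nontrivial soliton arises this way. This yields case (2) and the stated restriction on $p/q$.

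The main obstacle is the analysis of the progression angle $\theta(T,\lambda)$: both the strict monotonicity in $\lambda$ and the precise boundary values $2\sqrt{2/3}\pi$ and $2\pi$ must be extracted from an elliptic-integral representation. Monotonicity demands differentiation under the integral sign together with a sign-definiteness argument, while the boundary asymptotics require careful handling of two degenerations, namely the collision $x_1\to x_2$ (where the sn-solution degenerates and the period blows up) and $\lambda\to\infty$ (where the amplitude of $k_g$ grows without bound). Once this is established, the rest of the classification is essentially bookkeeping via the conservation laws \eqref{solitoneq} and the explicit integration of \eqref{secod-ode}.
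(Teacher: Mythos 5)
Your proposal is correct and follows essentially the same route as the paper: the constant-curvature branch gives the planar ellipses via Proposition \ref{prop-plane}, the causal character of the Killing field (equivalently the sign of $\mu$) excludes closure unless $\mu>0$, and in that case closure after $q$ curvature periods with rotation index $p$ is governed by the progression angle $\Lambda^{\Theta}(\lambda)$, whose strict monotonicity together with the limits $2\sqrt{2/3}\,\pi$ (as $\lambda\to 3(\mu/2)^{2/3}$) and $2\pi$ (as $\lambda\to\infty$) forces $p/q\in\left(\sqrt{2/3},1\right)$. The only divergence is technical: you propose establishing monotonicity by differentiating the elliptic-integral representation under the integral sign, whereas the paper normalizes $\mu=2$ and expands $\Lambda^{\Theta}$ as a series in $(3/\lambda)^{3/2}$ via the expansion of the complete elliptic integral of the third kind, which is the same quantitative step carried out by a different computation.
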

\begin{figure}[hbtp]
	\centering
		\includegraphics[width=.30\textwidth]{./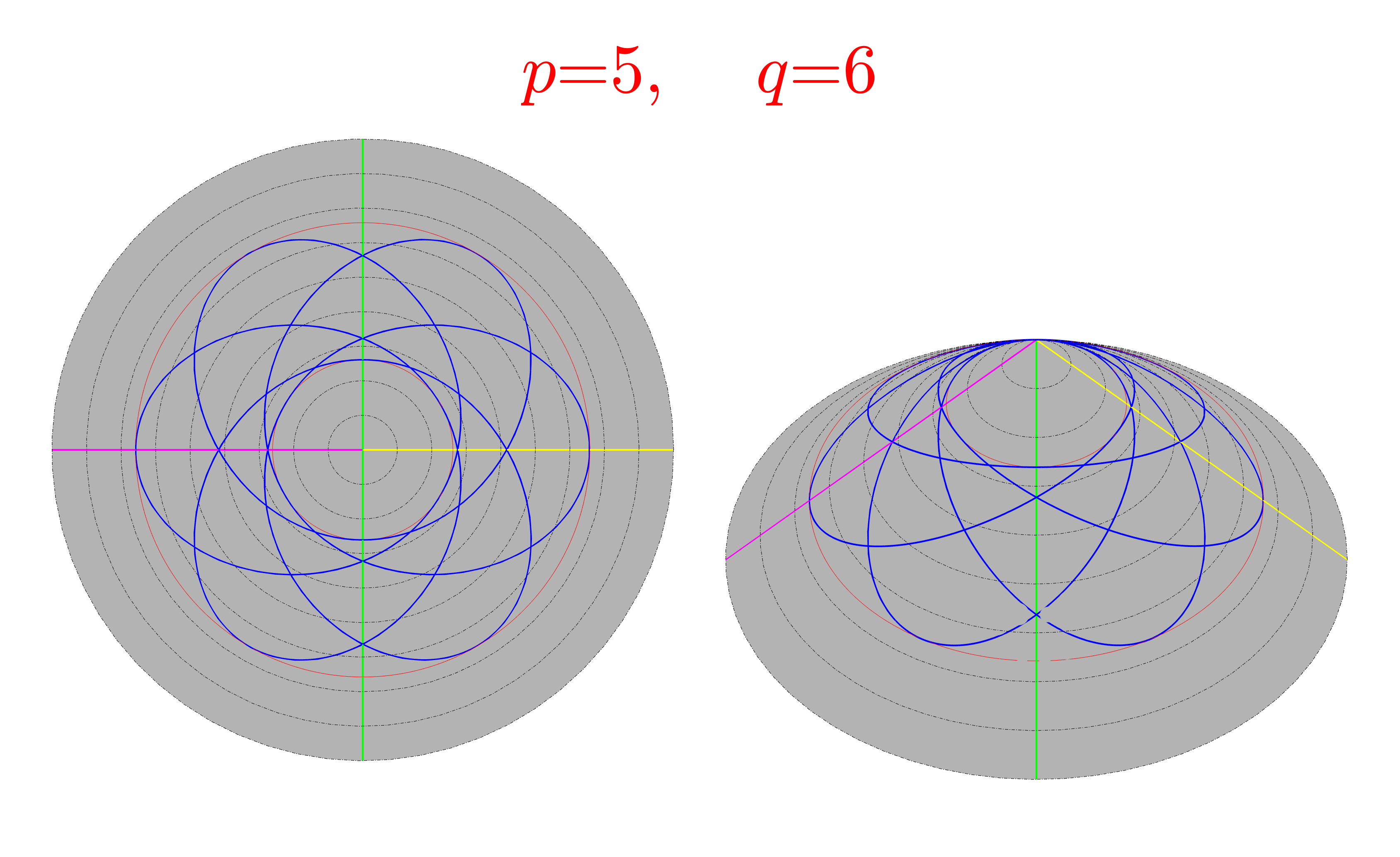}
        \includegraphics[width=.30\textwidth]{./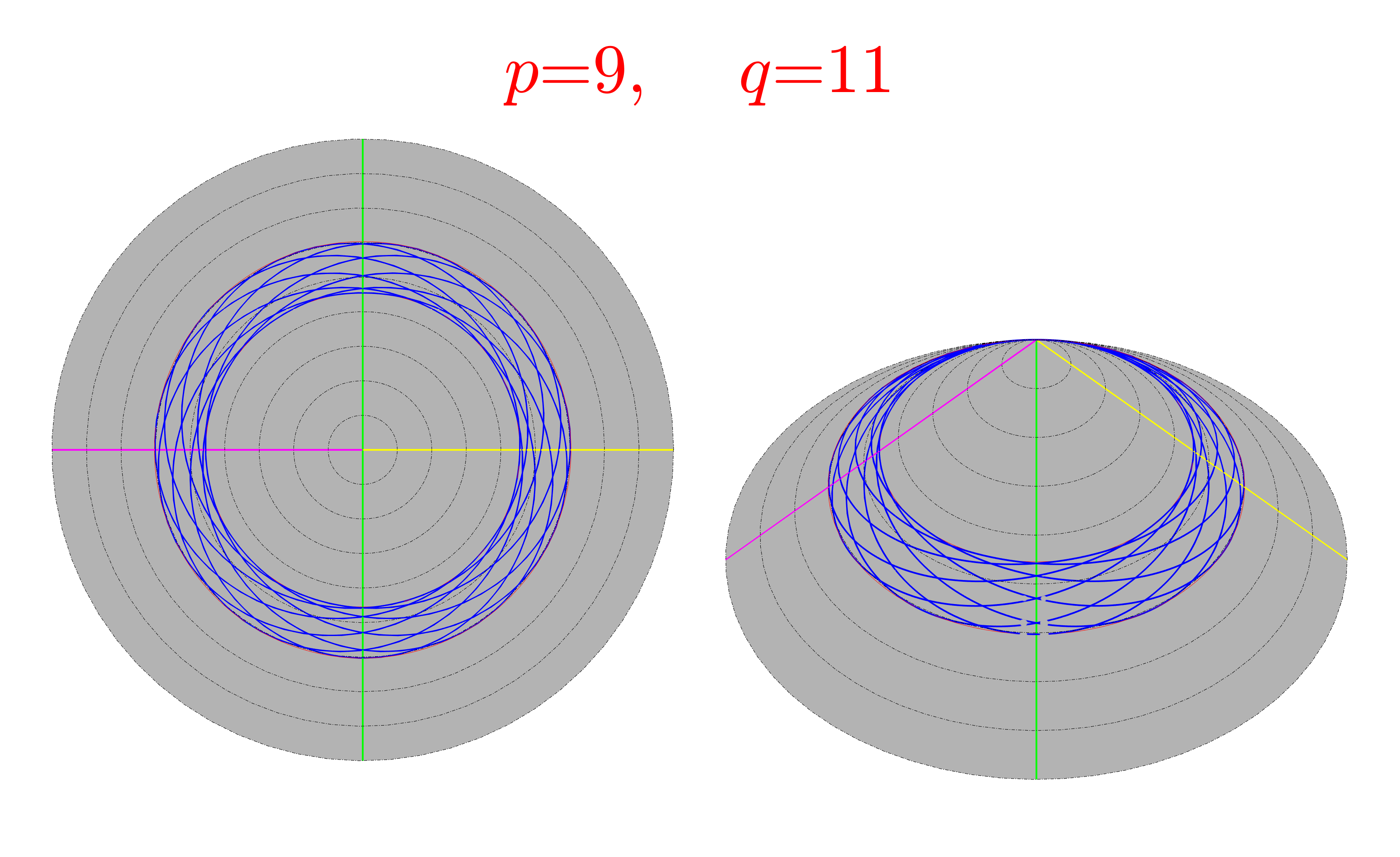}
        \includegraphics[width=.30\textwidth]{./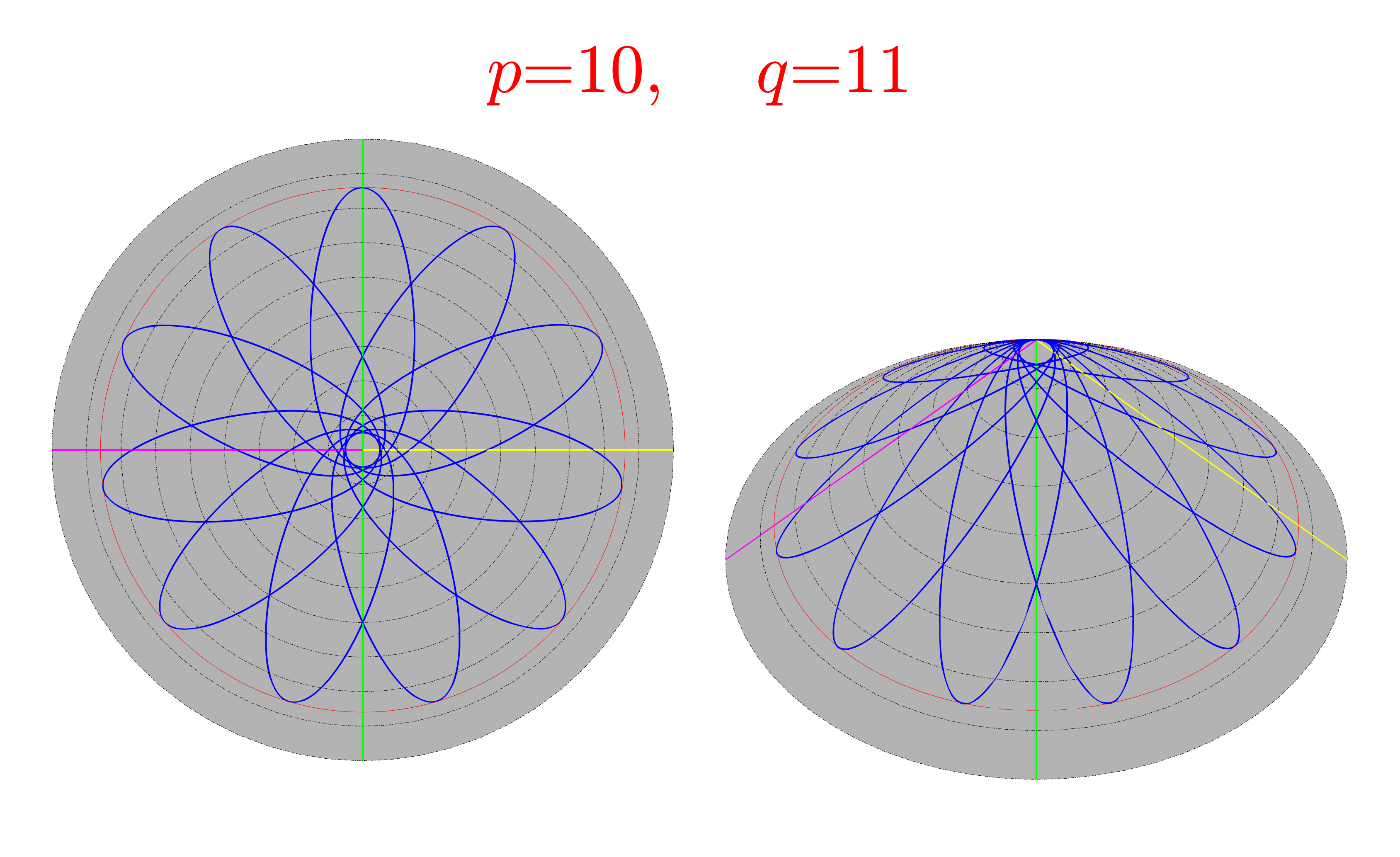}
        \\ \ \ \\
        \includegraphics[width=.30\textwidth]{./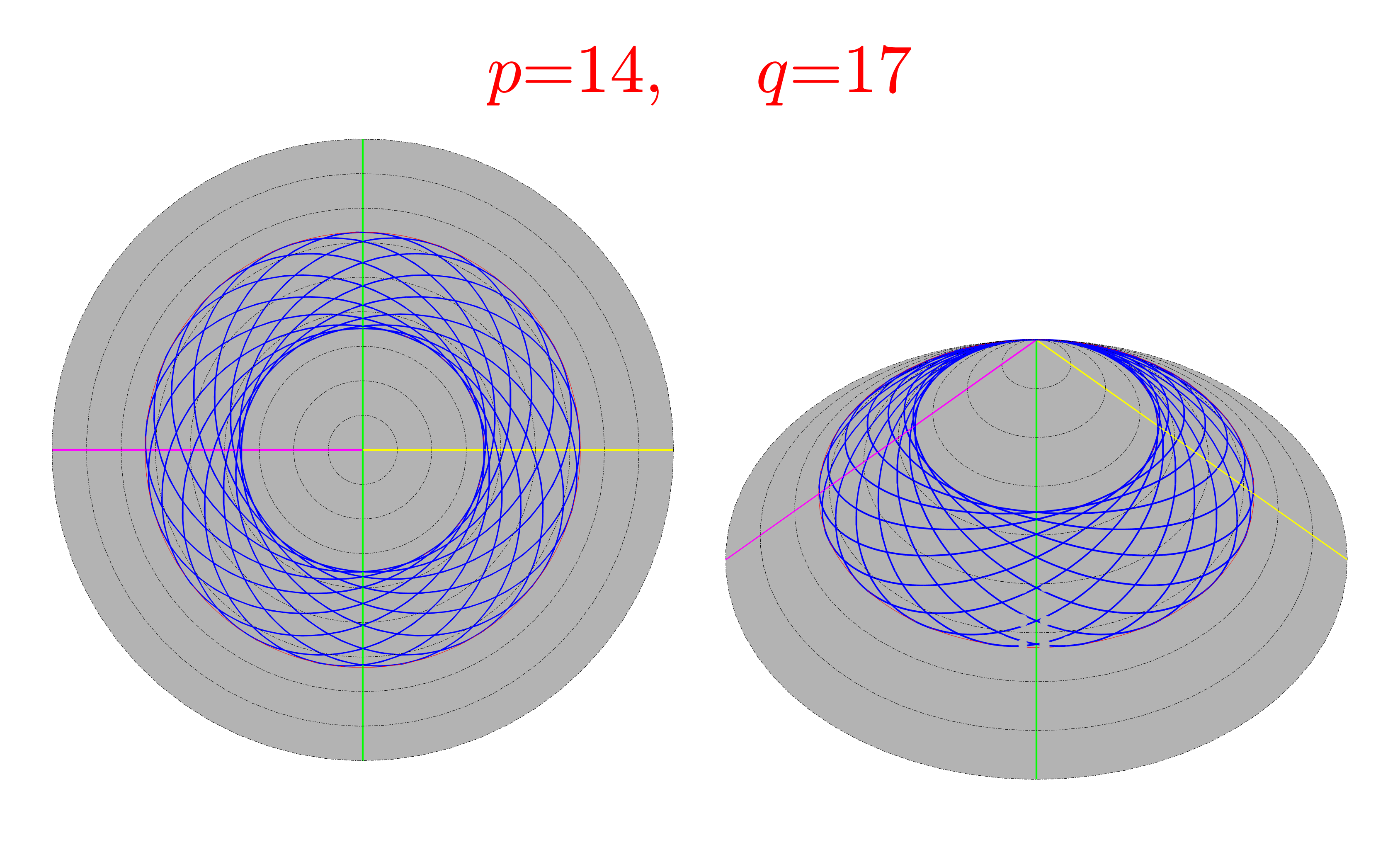}
        \includegraphics[width=.30\textwidth]{./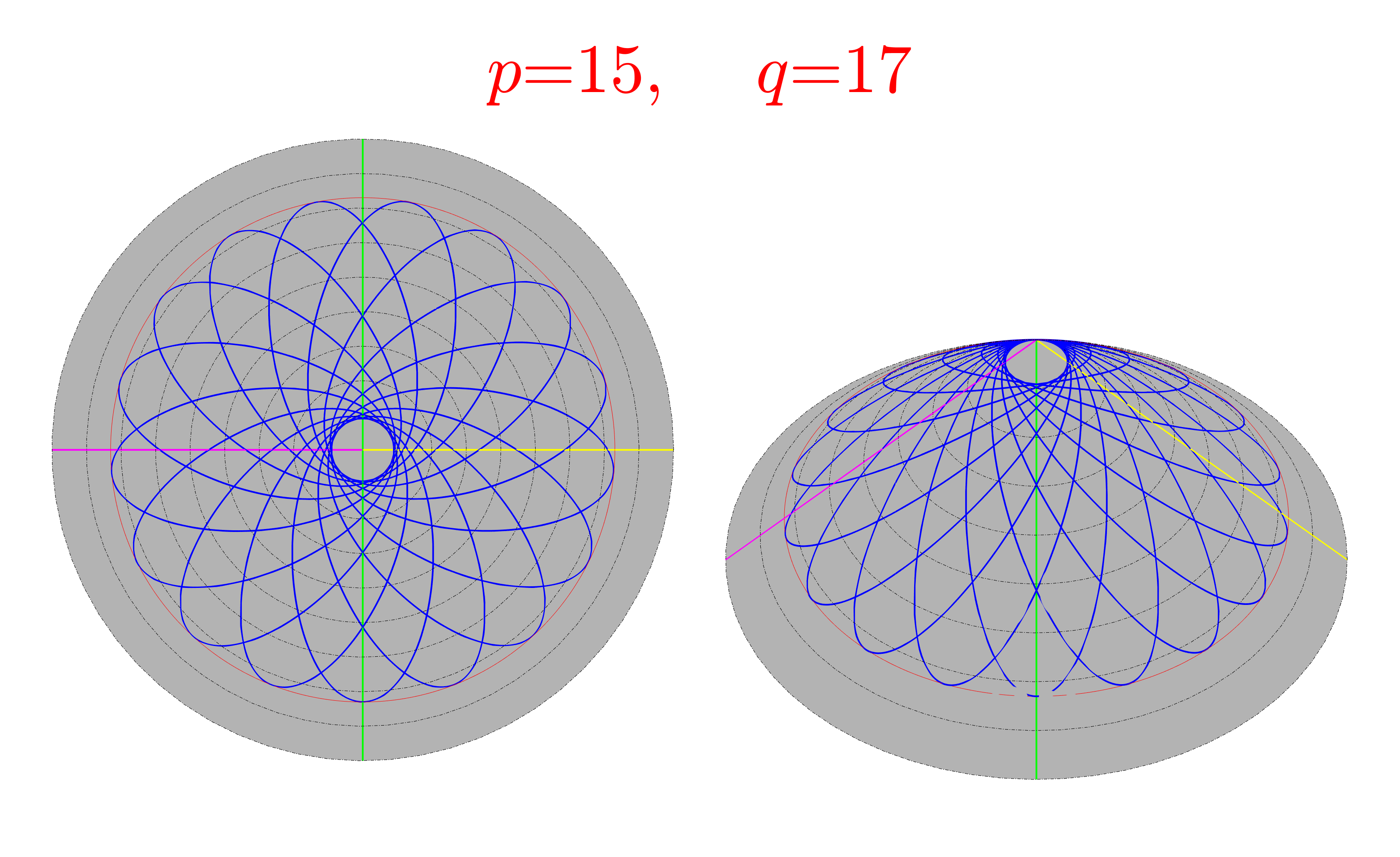}
        \includegraphics[width=.30\textwidth]{./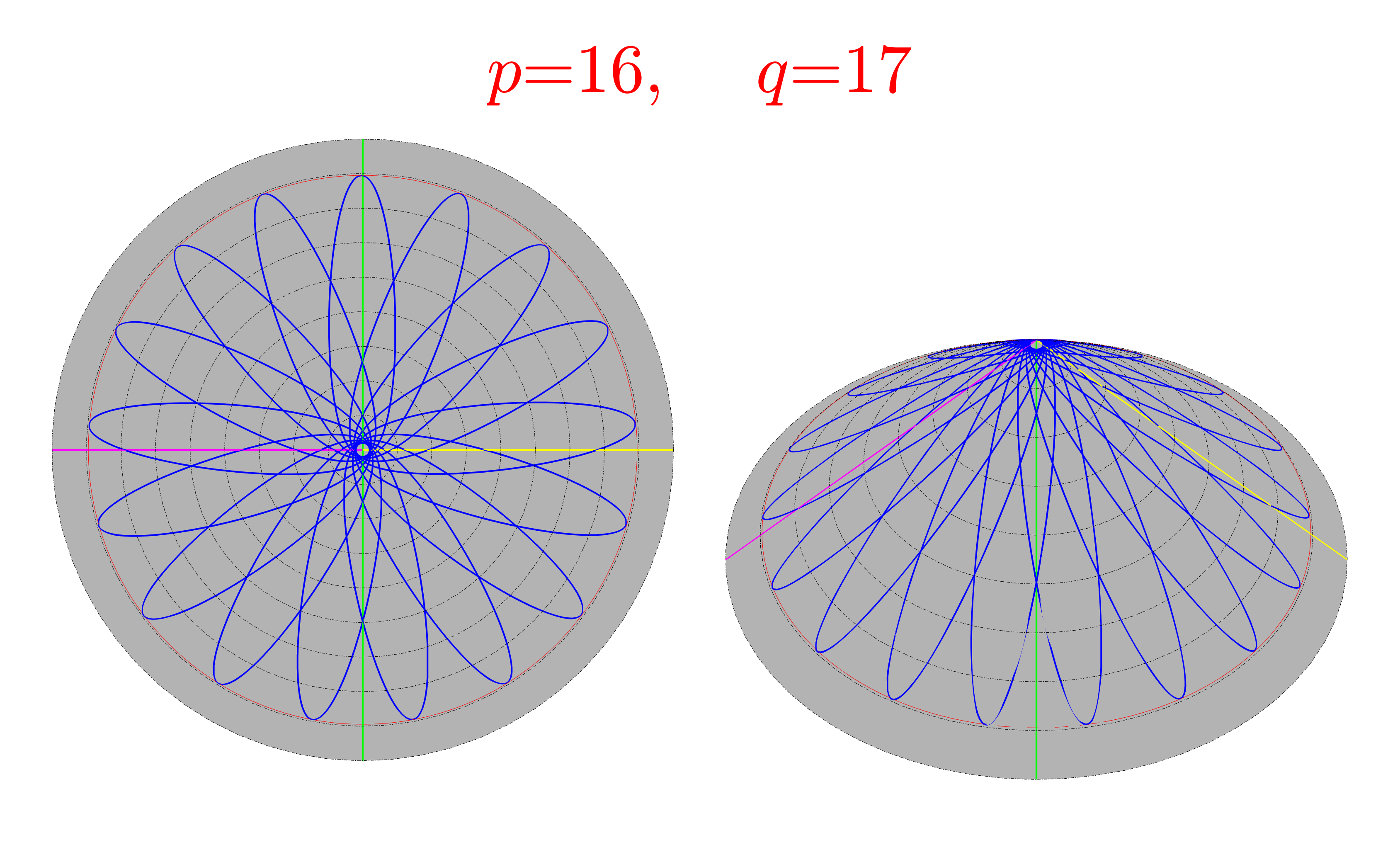}
	\setlength{\belowdisplayskip}{3pt}
	\caption{The closed solitons on light-cone.}
    \label{Fig:closed}
\end{figure}

In Fig. \ref{Fig:closed}, several examples of closed solitons are presented. These closed curves exhibit a circular motion up and down within the range  $\displaystyle \frac{-x_2}{\sqrt{\mu}}\leq\psi\leq\frac{-x_1}{\sqrt{\mu}}$.
The illustration demonstrates that as $p/q$ approaches $\sqrt{2/3}$, the difference  $x_2-x_1$ tends to $0$. Conversely, as $p/q$ approaches $1$, the difference $x_2-x_1$  tends to $+\infty$.

For $\mu=0$,  an appropriate rotation can align the rotation axis with the vector  $(1,1,0)$.
By setting the initial conditions $\psi(0)=-x_1$ and $\psi_s(0)=0$, we obtain another solution to \eqref{secod-ode} given by
$$ (1-\cos\theta)\psi=-2k_g$$ with the relation
$$\frac{d\theta}{ds}=\frac{1}{\psi}.$$
Due to the presence of divergent improper integrals at singular points, we are unable to derive a specific analytical expression for the variation of the angle $\theta$.
Nevertheless, numerical calculations (depicted in the left-hand side of  Fig. \ref{Fig:0lambda}) reveal that when the starting point is set at $\psi=-x_1$ and $\theta=\pi$, the angle $\theta(T,\lambda)$
 at the endpoint of the initial first period strictly monotonically increases from $2\pi$ to $3\pi$ as $\lambda$ varies from $0$ to $+\infty$.
This enables us to clearly comprehend the potential behavior of this type of soliton solution on the light-cone.

For $\mu<0$,  a suitable rotation can align the rotation axis with the $z$-axis, yielding another solution to  \eqref{secod-ode} is $$\psi\sin\theta=\frac{-k_g}{\sqrt{-\mu}}$$ and $$ \frac{d\theta}{ds}=\frac{1}{\psi}.$$
Again, due to the divergent improper integrals at singular points, we are unable to derive an explicit analytical expression for the variation of $\theta$.
Similarly, numerical calculations (depicted in the right-hand side of  Fig. \ref{Fig:0lambda}) offer further insights: when the starting point is set at $\psi=-x_1$ and $\displaystyle \theta=\frac{\pi}{2}$, the angle $\theta(T,\lambda)$
 at the endpoint of the initial first period strictly monotonically increases from $2\pi$ to $\frac{5\pi}{2}$ as $\lambda$ varies from $3\left(\mu/2\right)^{2/3}$ to $+\infty$.
This also allows us to clearly understand the potential behavior of this type of soliton solution on the light-cone.

\begin{figure}[hbtp]
	\centering
        \includegraphics[width=.30\textwidth]{./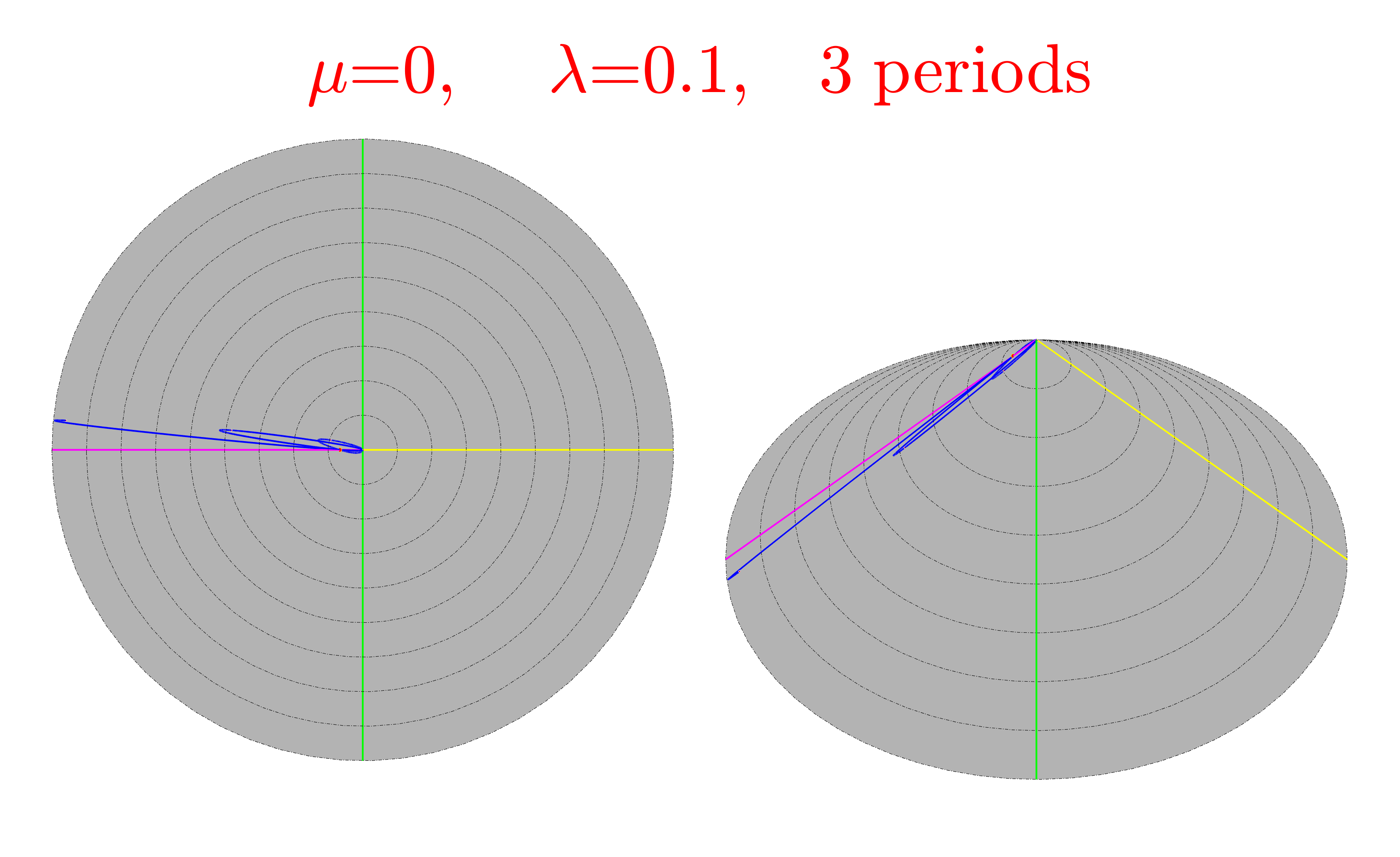}
        \includegraphics[width=.30\textwidth]{./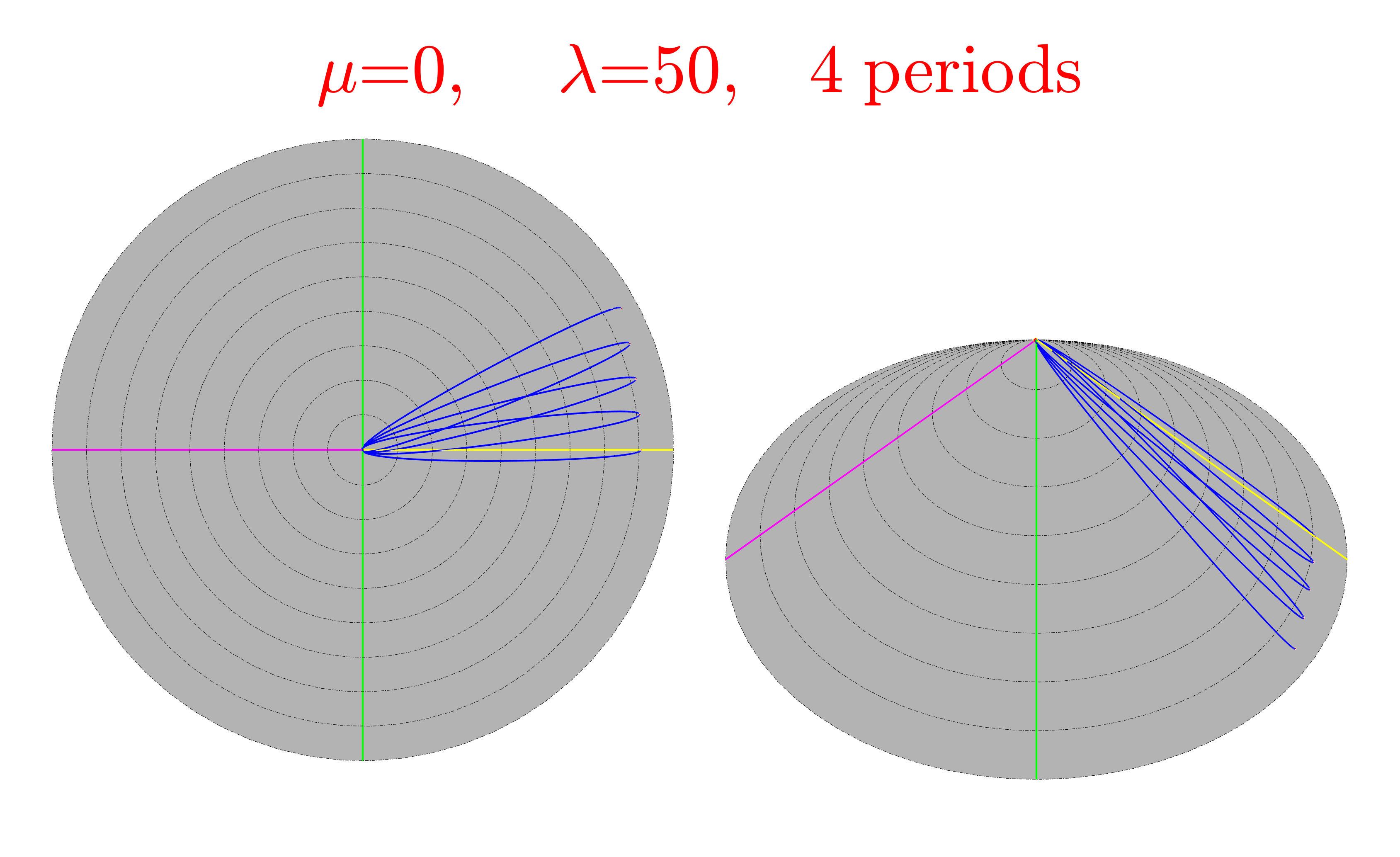}
        \includegraphics[width=.30\textwidth]{./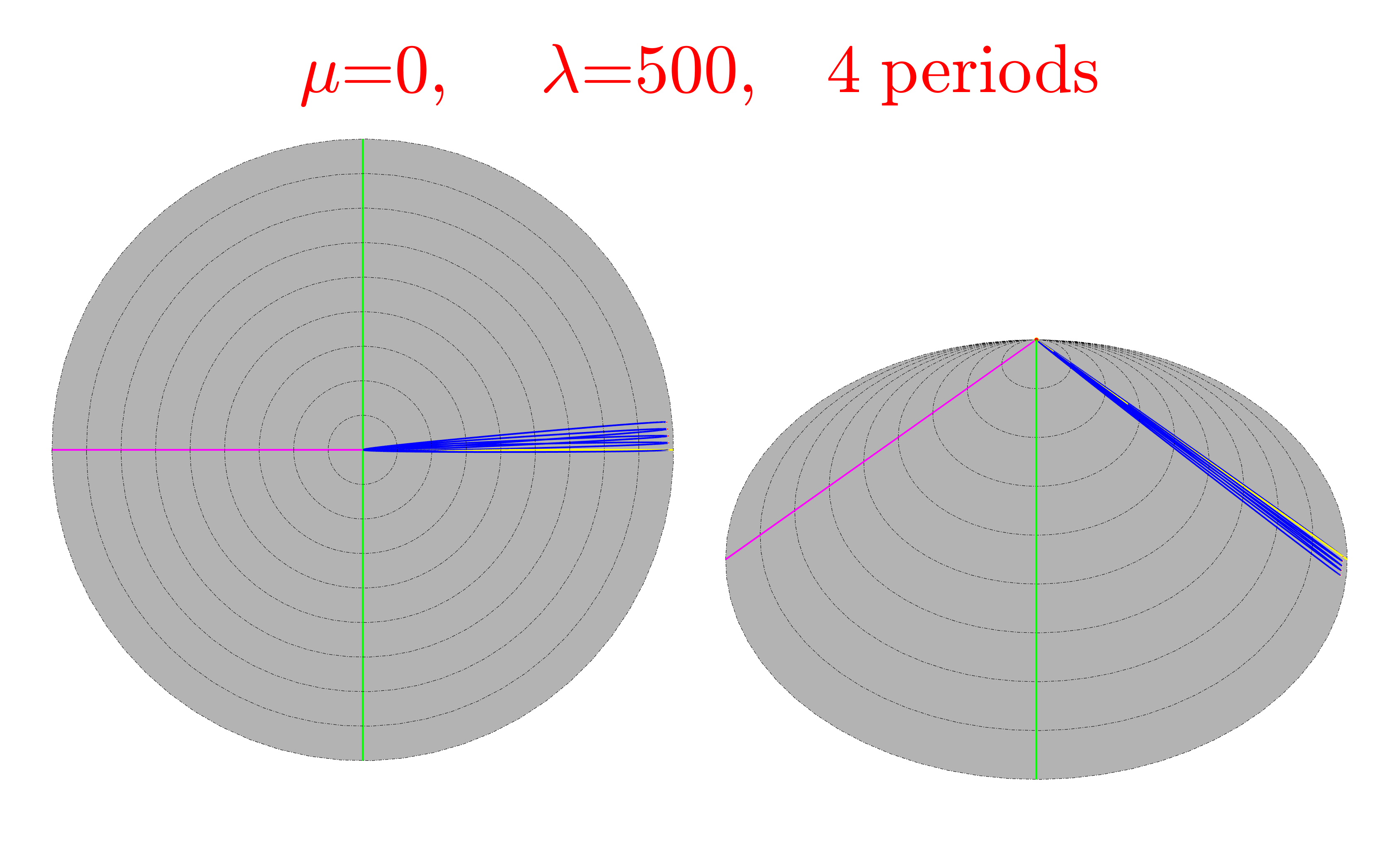}\\\ \ \\
        \includegraphics[width=.289\textwidth]{./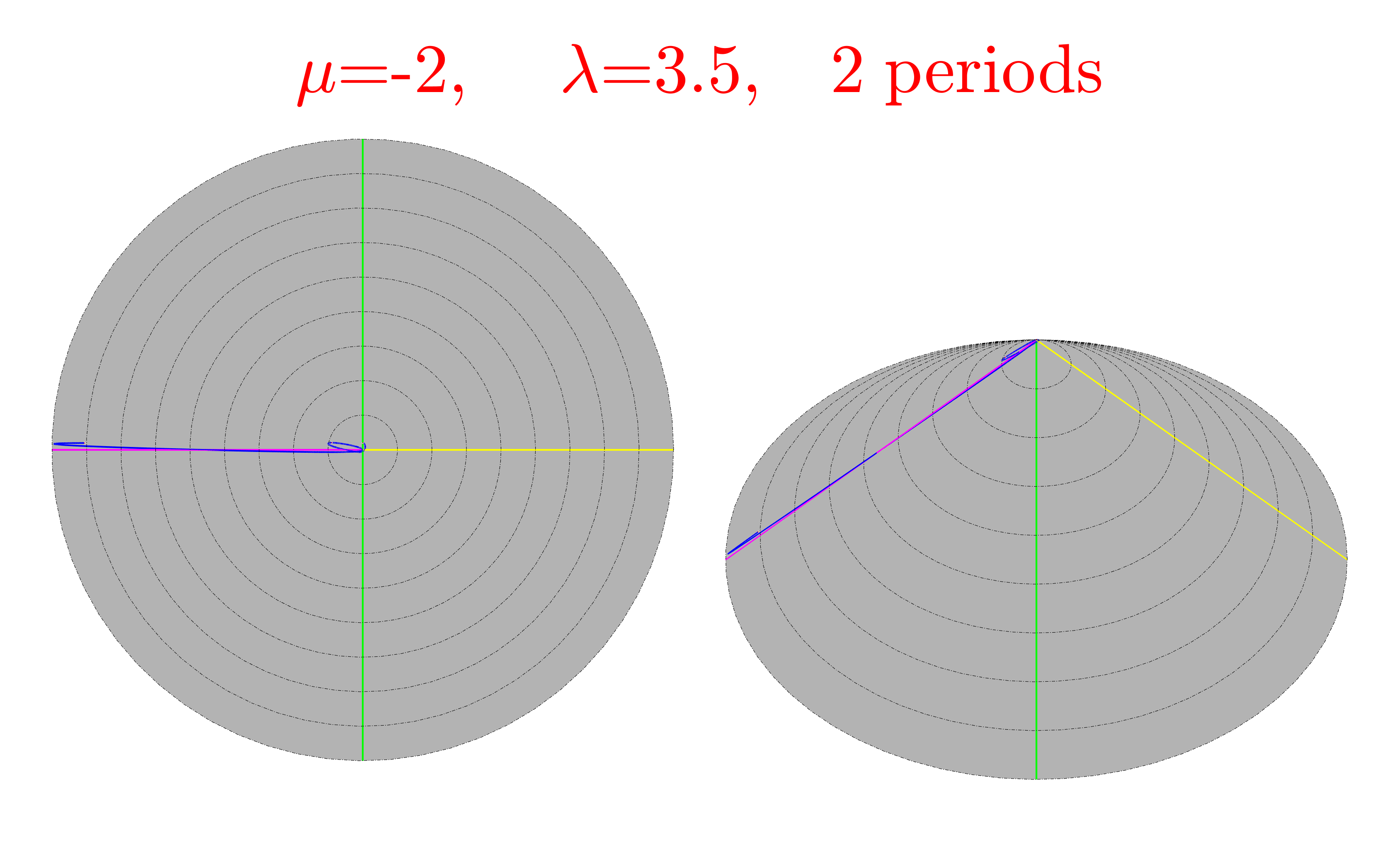}\;		
        \includegraphics[width=.30\textwidth]{./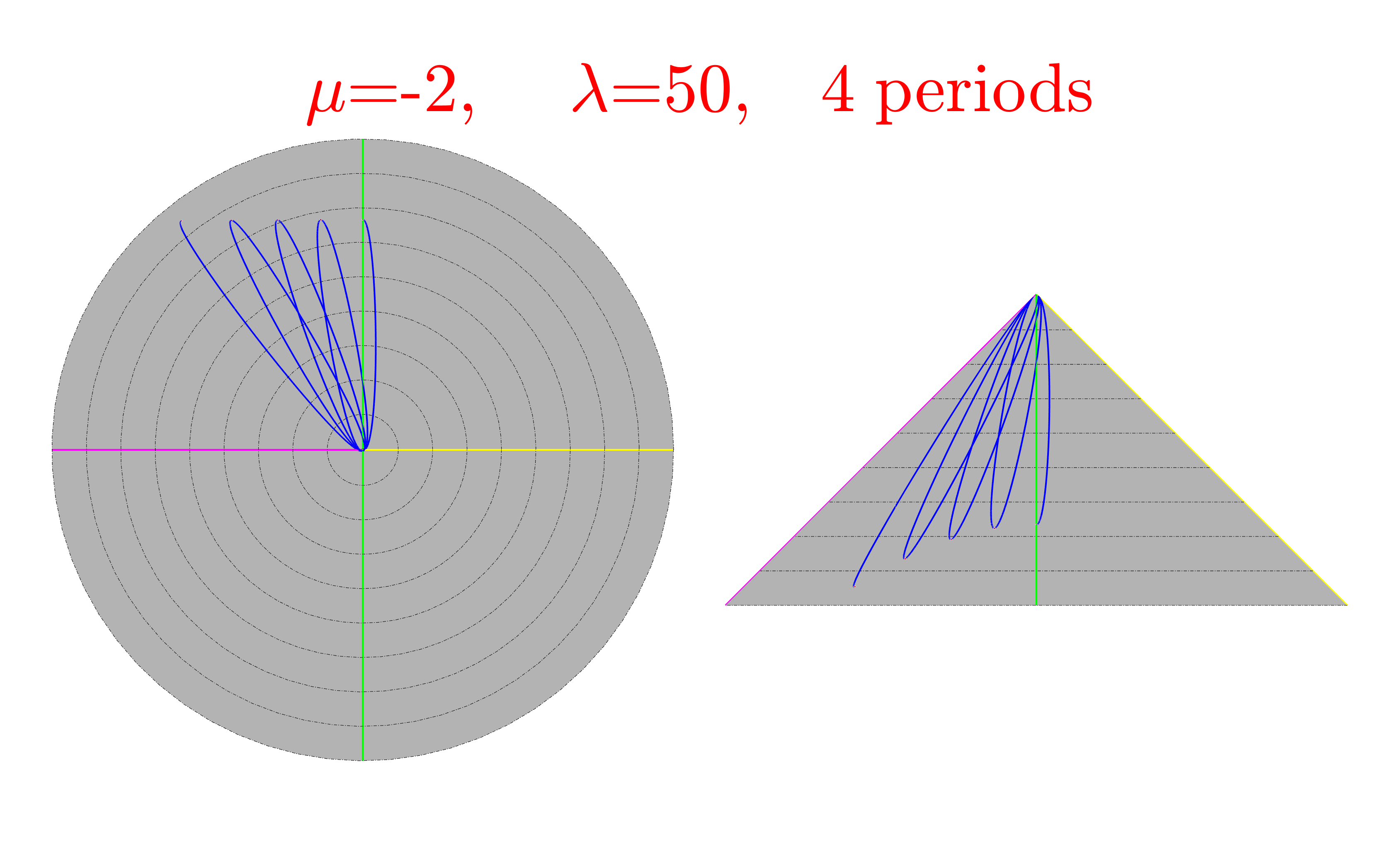}
        \includegraphics[width=.30\textwidth]{./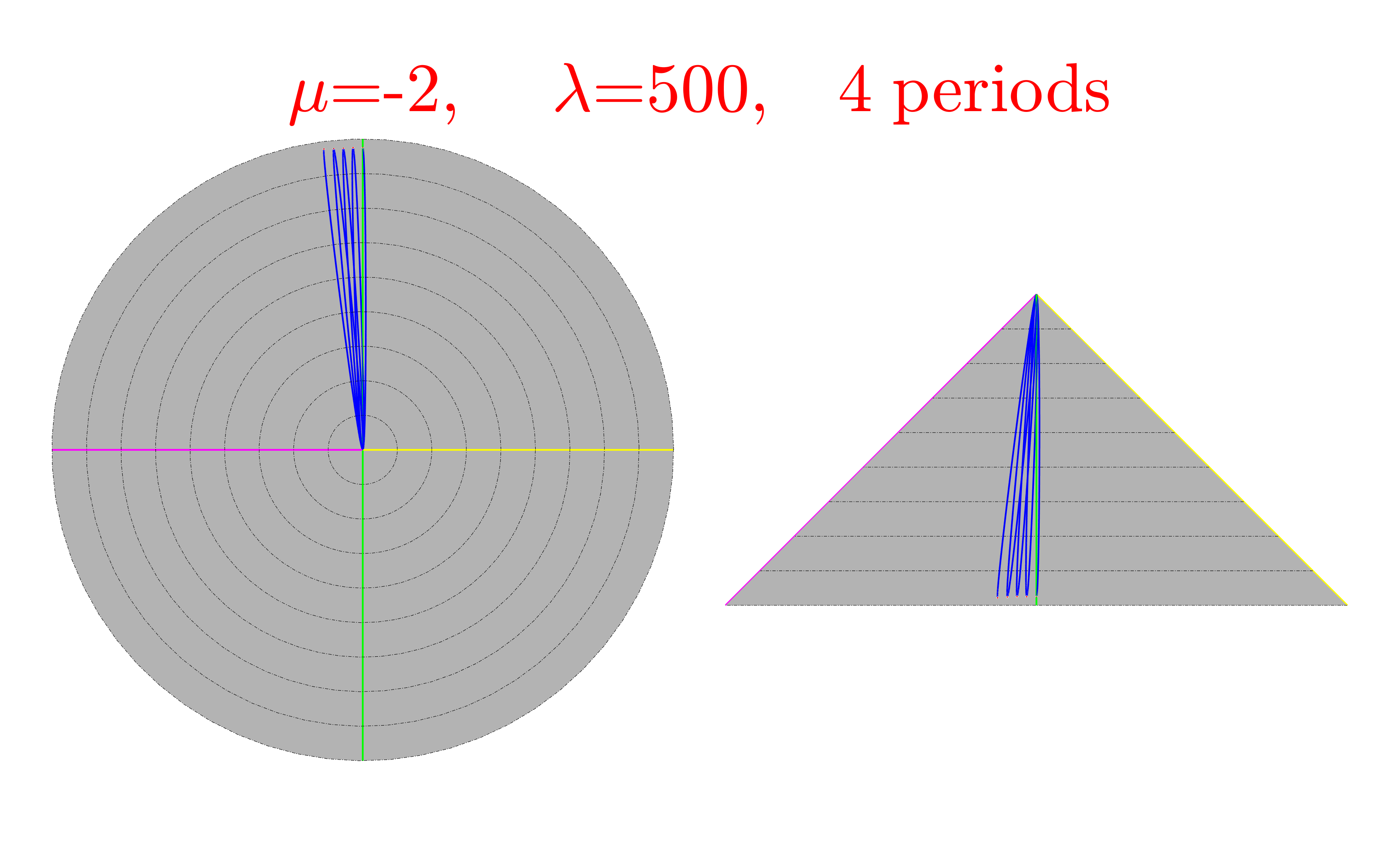}
	\setlength{\belowdisplayskip}{3pt}
	\caption{The non-closed periodic solitons on light-cone.}
    \label{Fig:nonclosed}
\end{figure}

Fig. \ref{Fig:nonclosed}  provides a visual representation of  solitons rotating around the light-like axis $(1,1,0)$ and the $z$-axis for $\mu=0$ and $\mu=-2$, respectively.
As detailed in Corollary \ref{cor-light} and Corollary \ref{cor-space}, the endpoints of each periodic cycle are situated at $\displaystyle (2n\pi, 3n\pi)$ and $\displaystyle \left(2n\pi, \frac{(4n+1)\pi}{2}\right)$,  respectively.
As $n$ increases, the endpoints approach $2n\pi$.
In the first row of Fig. \ref{Fig:nonclosed}, the magenta line represents the rotation axis $(1,1,0)$ with a fixed point situated at  $\displaystyle\psi=\frac{1}{\lambda}$.
Additionally, the yellow line is positioned at $\theta=\pi$.
The pattern is very evident: as  $\lambda$ diminishes towards $0$, the endpoints of the trajectories draw nearer to the magenta line. Conversely, as $\lambda$ augments, The endpoint of each cycle moves closer to the yellow line.
A comparable trend is clear in the second row of  Fig. \ref{Fig:nonclosed}:  as  $\lambda$ diminishes towards $3$, the endpoints of the trajectories draw nearer to the magenta line. Conversely, as $\lambda$ increases, the endpoint of each cycle migrates towards the starting point of the trajectories.

With the hints from the previous three special solutions, for \eqref{secod-ode}, we can present the general form of the solution (see Section \ref{subsec-ode} for the proof).
\begin{thm}\label{thm-ode}
Consider three constants  $x_1$, $x_2$ and $x_3$ that adhere to the conditions $x_1+x_2+x_3=0$ and $x_1<x_2<x_3$. Define the function
$$f(s)=x_1+(x_2-x_1)\mathrm{sn}\left(\frac{\sqrt{x_3-x_1}}{2}s,\;\sqrt{\frac{x_2-x_1}{x_3-x_1}}\right)^2.$$
Then the second order differential equation
\begin{align*}
  \psi_{ss}-\frac{\psi_{s}^2+1}{2\psi}-f(s)\psi=0,
\end{align*}
admits the solution
\begin{align*}
  (-c_1+c_2\cos\theta+c_3\sin\theta)\psi=f(s),
\end{align*}
where the integral constants $c_1,c_2,c_3$ fulfill the relation $-c_1^2+c_2^2+c_3^2=-x_1x_2x_3$ , and $\displaystyle\theta=\int_0^s\frac{dt}{\psi(t)}$.
\end{thm}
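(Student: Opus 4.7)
The plan is a direct verification: interpret the proposed identity $(-c_1 + c_2\cos\theta + c_3\sin\theta)\psi = f(s)$ together with $\theta_s = 1/\psi$ as an implicit definition of $\psi(s)$, and show that differentiating it twice and substituting back forces the claimed ODE. The whole computation is driven by a single quadratic identity in the constants $c_1,c_2,c_3$.

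I set $F := -c_1 + c_2\cos\theta + c_3\sin\theta$ and introduce the companion $G := -c_2\sin\theta + c_3\cos\theta$. A routine trigonometric expansion gives
$$F^2 + 2c_1 F + G^2 = c_2^2 + c_3^2 - c_1^2,$$
and by hypothesis the right-hand side equals $-x_1x_2x_3 = -\mu$. Here $\mu := x_1x_2x_3$ and $\lambda := -(x_1x_2+x_1x_3+x_2x_3)$ are precisely the constants appearing in \eqref{solitoneq}, obtained via Vieta from the cubic $x^3-\lambda x-\mu$ whose roots are $x_1<x_2<x_3$. This identity is the conservation law that will force all spurious terms to cancel.

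Because $\theta_s = 1/\psi$, the chain rule gives $F_s = G/\psi$ and $G_s = -(F+c_1)/\psi$. Differentiating $F\psi = f$ once yields $G = f_s - F\psi_s$. Substituting $F = f/\psi$ and this expression for $G$ into the identity $F^2 + 2c_1 F + G^2 = -\mu$, clearing the denominator $\psi^2$, and then applying the first integral $(f_s)^2 = f^3 - \lambda f - \mu$ from \eqref{solitoneq} (which holds because $f$ is the Jacobi-sn expression displayed there), the $\mu\psi^2$ terms cancel and division by $f$ produces the first-order algebraic relation
$$f + 2c_1\psi + f^2\psi^2 - \lambda\psi^2 - 2f_s\psi\psi_s + f\psi_s^2 = 0.$$

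Differentiating $F\psi = f$ a second time and using the formulas for $F_s,G_s$ to isolate $\psi_{ss}$ gives
$$\psi_{ss} = \frac{\psi f_{ss}}{f} + \frac{1}{\psi} + \frac{c_1}{f} - \frac{f_s\psi_s}{f} + \frac{\psi_s^2}{\psi}.$$
Solving the previous first-order relation for $c_1/f$ and substituting, and then applying the second soliton equation $f_{ss} = \tfrac{3}{2}f^2 - \tfrac{\lambda}{2}$ from \eqref{solitoneq}, all $\lambda$-dependent terms and all $f_s\psi_s/f$ terms cancel, leaving
$$\psi_{ss} = f\psi + \frac{1+\psi_s^2}{2\psi},$$
which is exactly the claimed equation. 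The main, essentially bookkeeping, obstacle is to orchestrate this cancellation cleanly: the cubic first integral for $(f_s)^2$ and the quadratic formula for $f_{ss}$ have to interact with the twice-differentiated defining relation in a compatible way, and the precise match $c_2^2+c_3^2-c_1^2 = -\mu$ is exactly the hypothesis that makes this compatibility hold. The division by $f$ is legitimate on the open dense set where $f\neq 0$, and the conclusion extends by continuity to the zero locus of $f$.
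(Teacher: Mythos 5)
Your verification is correct: the implicit differentiation is carried out accurately, the trigonometric identity $F^2+2c_1F+F_\theta^2=c_2^2+c_3^2-c_1^2$ together with the two integrated relations $f_s^2=f^3-\lambda f-\mu$ and $f_{ss}=\tfrac32 f^2-\tfrac{\lambda}{2}$ (with $\mu=x_1x_2x_3$, $\lambda=-(x_1x_2+x_1x_3+x_2x_3)$) does force exactly the cancellations you describe, and the outcome is the stated ODE. Your route, however, is organized differently from the paper's. The paper does not verify the implicit relation directly; it makes the separation ansatz $\psi(s)=f(s)G(\theta)$ with $\theta_s=1/\psi$, uses the single combined identity $2ff_{ss}-f_s^2-2f^3=x_1x_2x_3$ (which is precisely your two relations merged), and shows the ODE transforms into $G_{\theta\theta}-\tfrac{3G_\theta^2}{2G}-\tfrac{G}{2}+\tfrac{G^3}{2}x_1x_2x_3=0$, i.e.\ the statement that the curve $\psi=G(\theta)$ on the light-cone has constant curvature $-x_1x_2x_3/2$; Proposition \ref{prop-plane} then gives $G=(-c_1+c_2\cos\theta+c_3\sin\theta)^{-1}$ with $-c_1^2+c_2^2+c_3^2=-x_1x_2x_3$. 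What the paper's route buys is structure: it exhibits the solution as a product of the curvature profile and a planar (constant-curvature) section of the cone, and it shows that every solution of the separated form arises this way; your trigonometric identity is exactly the analytic content of Proposition \ref{prop-plane}, used without naming it. What your route buys is self-containedness and the correct logical direction for the literal claim ``admits the solution'': you check that the implicitly defined $\psi$ satisfies the equation, with no appeal to the classification of constant-curvature curves. Two small points to tighten: the relations $f_s^2=f^3-\lambda f-\mu$ and $f_{ss}=\tfrac32 f^2-\tfrac{\lambda}{2}$ for the Jacobi-sn expression deserve the one-line factorization check $f-x_1=(x_2-x_1)\mathrm{sn}^2$, $f-x_2=-(x_2-x_1)\mathrm{cn}^2$, $f-x_3=-(x_3-x_1)\mathrm{dn}^2$ rather than a citation; and the continuity argument at the zero locus of $f$ should mention that these zeros (which occur only when $x_1<0<x_2$) are isolated since $f$ is nonconstant and analytic, so the extension is immediate. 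Neither point is a gap.
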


Specifically,  utilizing the curvature formula provided in \eqref{rem-k_g},
\eqref{secod-ode} can be reconfigured into
\begin{align*}
 2\psi_{\theta\theta}\psi-3\psi^2_{\theta}-2\psi^5(-c_1+c_2\cos\theta+c_3\sin\theta)-\psi^2=0.
\end{align*}
Based on the comprehensive derivation process outlined previously, it becomes evident that the solutions to this equation over the entire interval
can be constructed by rotating the points $(\psi(\theta),\psi(\theta)\cos\theta, \psi(\theta)\sin\theta)$ in the first period around the axis $(c_1,c_2,c_3)$.
Consequently, by substituting $\psi=u^{-2}$, and combining Proposition \ref{prop-b-lignt} and Proposition \ref{prop-b-space},   the following corollary emerges.
\begin{cor} For the ordinary differential equation
\begin{align*}
 u_{\theta\theta}+\frac{1}{2u^{5}}(-c_1+c_2\cos\theta+c_3\sin\theta)+\frac{u}{4}=0,
\end{align*}
where the constants $c_1,c_2$ and $c_3$ cannot all be zero simultaneously, the positive solution $u(\theta)$ demonstrates the subsequent properties.
\begin{itemize}
  \item If $-c_1^2+c_2^2+c_3^2<0$, there exist two positive constant $D_1$ and $D_2$ such that $D_1\leq u(\theta)\leq D_2$.
  \item If $-c_1^2+c_2^2+c_3^2\geq0$, $u(\theta)$ is unbounded and its infimum is $0$.
\end{itemize}
 Notably, when $-c_1^2+c_2^2+c_3^2=0$,
 there exists a fixed point $u(\theta_0)=u(\theta_0+2\pi)$, where $\theta_0$ satisfies the conditions $c_1\sin\theta_0=c_2$ and $c_1\cos\theta_0=c_3$.
\end{cor}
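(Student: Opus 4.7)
The plan is to transport the problem to the $\psi$-side via Theorem \ref{thm-ode} and then to read off the asymptotics from Propositions \ref{prop-b-lignt} and \ref{prop-b-space}. Under the change of variables $\psi = u^{-2}$ together with the reparameterization $d\theta/ds = 1/\psi$, a routine chain-rule computation shows that a positive solution $u(\theta)$ of the corollary's ODE corresponds to a positive solution $\psi(s)$ of the second-order equation of Theorem \ref{thm-ode}, with forcing $f(s)$ identified by the algebraic relation $u^{2}(\theta)\,f(s) = A(\theta)$, where $A(\theta) := -c_1 + c_2\cos\theta + c_3\sin\theta$. Theorem \ref{thm-ode} then furnishes the first integral
\begin{equation*}
A(\theta)\,\psi(s) \;=\; f(s), \qquad -c_1^2 + c_2^2 + c_3^2 \;=\; -x_1 x_2 x_3 \;=:\; -\mu,
\end{equation*}
with $f$ taking values in $[x_1, x_2]$. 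By Vieta's formulas applied to $x^3 - \lambda x - \mu = 0$, the interval $[x_1, x_2]$ lies in $(-\infty, 0)$ when $\mu > 0$, touches $0$ when $\mu = 0$, and straddles $0$ when $\mu < 0$.

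For $-c_1^2+c_2^2+c_3^2 < 0$, we have $\sqrt{c_2^2+c_3^2} < |c_1|$, so $|A(\theta)|$ is pinched in $[|c_1|-\sqrt{c_2^2+c_3^2},\,|c_1|+\sqrt{c_2^2+c_3^2}]$, both bounds positive; simultaneously $|f|$ is bounded away from $0$ in $[|x_2|,|x_1|]$. Thus $u^{2} = A/f$ lies between two positive constants, yielding $D_1 \le u(\theta) \le D_2$. For $-c_1^2+c_2^2+c_3^2 \ge 0$, the oscillation of $A$ dominates $|c_1|$, so $A$ has zeros on $[0,2\pi)$ and $f$ correspondingly touches $0$. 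Invoking the explicit soliton descriptions of Propositions \ref{prop-b-lignt} (for $\mu = 0$, light-like axis) and \ref{prop-b-space} (for $\mu < 0$, space-like axis), $\psi$ diverges to $+\infty$ along its maximal trajectory, forcing $\inf u = 0$ and making $u$ unbounded on its domain. In the boundary subcase $-c_1^2+c_2^2+c_3^2 = 0$, the zero $\theta_0$ of $A$ on $[0,2\pi)$ is unique and characterized by the identities $c_1\sin\theta_0 = c_2$, $c_1\cos\theta_0 = c_3$; Proposition \ref{prop-b-lignt} then pins down that the $\theta$-progression over one full $s$-period of the soliton equals exactly $2\pi$ at $\theta_0$, yielding the fixed-point identity $u(\theta_0) = u(\theta_0 + 2\pi)$.

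The main obstacle will lie in the unbounded regime: because the first integral forces the zeros of $A$ to coincide with those of $f$ under the reparameterization, $u^{2} = A/f$ is a $0/0$ indeterminate form at these points, and a naive limit calculation would only yield finite positive values, concealing the blow-up of $\psi$. The genuine divergence of $\psi$, and thus the statement $\inf u = 0$ together with the unboundedness of $u$, become visible only through the divergent improper integrals for $\theta(s)$ near the singular points; this is precisely the content extracted in Propositions \ref{prop-b-lignt} and \ref{prop-b-space}, where the soliton-specific asymptotic analysis does the essential work of the proof.
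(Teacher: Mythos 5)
Your proposal is correct and follows essentially the paper's own route: substitute $\psi=u^{-2}$, represent the positive solution via Theorem \ref{thm-ode} as $(-c_1+c_2\cos\theta+c_3\sin\theta)\psi=f(s)$ with $-c_1^2+c_2^2+c_3^2=-x_1x_2x_3=-\mu$, obtain the bounded case from the time-like ($\mu>0$) situation — your direct pinching of $u^2=A/f$ between positive constants is just a streamlined form of the paper's closed-soliton bound $-x_2/\sqrt{\mu}\le\psi\le-x_1/\sqrt{\mu}$ — and quote Propositions \ref{prop-b-lignt} and \ref{prop-b-space} for the unboundedness and vanishing infimum of $\psi$ (hence $\sup u=+\infty$ and $\inf u=0$) when $\mu\le0$. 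Two small corrections: the fixed-point identity in the borderline case rests on Remark \ref{rem-02pi} (all points with $k_g=0$ occur at $\theta=2n\pi$ with the same value $\psi=2/(b\lambda)$, i.e.\ the curve repeatedly passes through the single point where it meets the light-like rotation axis), not on Proposition \ref{prop-b-lignt}, which only records unboundedness; and the zero $\theta_0$ of $A$ is actually characterized by $c_1\cos\theta_0=c_2$, $c_1\sin\theta_0=c_3$ — the swapped identities you cite are the ones printed in the corollary, so this is inherited from the statement rather than a flaw in your argument.
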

\subsection*{Outline}  In Section \ref{sec-back}, we dedicate some time to revisiting the geometry of the light-cone and introducing our notations. Moving on to Section \ref{sec-ev}, we derive the relevant evolution equations, ultimately establishing the Li-Yau Harnack inequality for heat flow and highlighting some fascinating properties related to a third-order flow. Then, in Section \ref{sec-soliton}, we deduce the space-periodic solitons for this third-order flow and subsequently obtain analytic solutions to a second-order nonlinear ordinary differential equation.

\subsection*{Acknowledgements} This work was supported by Foreign Expert Key Support Program (Northeast Special) Grants-D20240219.


\section{Preliminaries}\label{sec-back}
Let $V$ be an $n$-dimensional real vector space. A {\it Lorentzian scalar product} on $V$ is defined as a non-degenerate symmetric bilinear form
$\langle\cdot,\cdot\rangle_L$ of index $1$. This implies the existence of a basis $\mathbf{e}_1,\cdots,\mathbf{e}_n$ of $V$
such that
\begin{align*}
  \langle\mathbf{e}_i,\mathbf{e}_j\rangle_L=\left
                \{\begin{array}{cl}
                     -1, & \mathrm{if}\quad i=j=1,  \\
                     1, & \mathrm{if}\quad i=j=2,\ \cdots,\ n, \\
                     0, & \mathrm{otherwise}.
                   \end{array}
  \right.
\end{align*}
A {\it Lorentzian manifold} is a pair $(M, g)$, where $M$ is an $n$-dimensional smooth
manifold and $g$ is a Lorentzian metric, i.e., $g$ associates to each point $p\in M$ a
Lorentzian scalar product $g_p$ on the tangent space $T_pM$.

The {\it Minkowski space} $\E_1^n$ is the vector space $\R^n$ endowed with the Lorentzian scalar product defined as follows:
\begin{equation}\label{inn-p}
\langle\mathbf{u},\mathbf{v}\rangle_L=-u_1v_1+u_2v_2+\cdots+u_nv_n,
\end{equation}
where $\mathbf{u}=(u_1,\cdots,u_n)\in\R^n$ and $\mathbf{v}=(v_1,\cdots,v_n)\in\R^n$.
In cartesian coordinates $(x_1,\cdots , x_n)$ on $\R^n$, the {\it Minkowski metric}
is defined by $\displaystyle g=-(dx_1)^2+(dx_2)^2+\cdots+(dx_n)^2$.

In Minkowski space $\E_1^n$, a vector $\mathbf{v}=\{v_1,\cdots,v_n\}\in \E_1^n$ is classified as follows:
\begin{itemize}
  \item It is called {\it space-like} if$\displaystyle\langle \mathbf v,\mathbf v\rangle_L >0$ or if $\displaystyle\mathbf v=\mathbf 0$.
  \item It is termed {\it time-like} if $\displaystyle\langle \mathbf v,\mathbf v\rangle_L< 0$.
  \item It is designated as {\it light-like} (or {\it null}) if $\displaystyle\langle \mathbf v,\mathbf v\rangle_L= 0$ and $\displaystyle\mathbf v\neq\mathbf 0$.
  \item It is considered {\it causal}, if it is time-like or space-like.
\end{itemize}
The magnitude (or length) of the vector $\mathbf v$  is denoted by $\displaystyle|\mathbf v|$ and is defined as by $\displaystyle|\mathbf v|=\sqrt{|\langle \mathbf v,\mathbf v\rangle_L|}$.

Furthermore, for a curve $\displaystyle \mathbf r = \mathbf r(p):I\subset\R\to \E^n_1$, it is
classified as space-like, time-like, or light-like, if its velocity vector $\mathbf r'(p)$ is space-like, time-like, or light-like, respectively.

The {\it plane} with {\it pseudo-normal} $\mathbf{v}$ in $\E^n_1$ is defined by
\begin{equation*}
  P(\mathbf{v},c)=\{\mathbf{x}\in\E^n_1\;|\;\langle\mathbf{x},\mathbf{v}\rangle_L=c\},
\end{equation*}
where $\mathbf{v}$ is a non-zero vector in $\E^n_1$ and $c$ is constant. Moreover, $P(\mathbf{v},c)$ is called a space-like plane, a time-like plane or light-like plane if
 $\mathbf{v}$ is time-like, space-like or light-like, respectively.

Given a real number $c>0$, we define the following subspaces in Minkowski space $\E^n_1$.
\begin{itemize}
  \item The {\it hyperbolic space} $H^{n-1}(c)$ is given by
  \begin{equation*}
     H^{n-1}(c)=\{\mathbf{x}\in\E^{n-1}_1 \;|\; \langle\mathbf{x},\mathbf{x}\rangle_L=-c^2\}.
  \end{equation*}
  \item The {\it de Sitter space} $S^{n-1}_1(c)$ is defined as
   \begin{equation*}
      S^{n-1}_1(c)=\{\mathbf{x}\in\E^n_1 \;|\; \langle\mathbf{x},\mathbf{x}\rangle_L=c^2\}.
    \end{equation*}
  \item The {\it light-cone} $LC$ is described by
  \begin{equation*}
    LC=\{\mathbf{x}\in\E^n_1\backslash\{0\} \;|\; \langle\mathbf{x},\mathbf{x}\rangle_L=0\}.
  \end{equation*}
\end{itemize}
In $\E^n_1$, the term pseudo-sphere collectively refers to the hyperbolic space, the de Sitter space, and the light-cone.

The {\it arc length parameter} of a curve
$\mathbf{r}(p)\in\E^n_1$ is defined by
\begin{equation*}
  \frac{ds}{dp}=\left|\frac{d\mathbf{r}}{dp}\right|.
\end{equation*}
 Assume $\mathbf{r}$ be a curve in $\E^n_1$ parametered with arc length $s$, the unit tangent vector $\mathbf{T}$ is denoted by $\mathbf{T}=\frac{d\mathbf{r}}{ds}=\mathbf{r}_s$.
In $\E_1^3$, the pseudo vector product of $\mathbf{u}$ and $\mathbf{v}$ is given by
\begin{equation}\label{cross-p}
\mathbf{u}\times_L\mathbf{v}=(u_3v_2-u_2v_3,\, u_3v_1-u_1v_3,\, u_1v_2-u_2v_1).
\end{equation}

\subsection{The Frenet formulas in Minkowski space}
In $\E_1^3$, based on the characteristics of $\mathbf{T}_s$,  we categorize our discussion into the following two scenarios.

 {\bf Case 1.} If $\langle\mathbf{T}_s, \mathbf{T}_s\rangle_L\neq0$,
  the unit principal normal vector field is defined as $\mathbf{N}=\frac{\mathbf{T}_s}{\vert  \mathbf{T}_s\vert}$, and the unit binormal vector is given by $\mathbf{B}= \mathbf{T}\times_L  \mathbf{N}$. Here, $\langle  \mathbf{T}, \; \mathbf{T}\rangle_L=\eta_1$, $\langle \mathbf{N},\; \mathbf{N}\rangle_L=\eta_2,$ and $\langle \mathbf{B},\;\mathbf{B}\rangle_L=\eta_3,$ where $\eta_i=\pm 1,\; i=1,2,3$. Note that according to \eq{inn-p} and \eq{cross-p}, we have
 \begin{equation*}
  \eta_1\eta_2\eta_3=-1,\quad \eta_1+\eta_2+\eta_3=1.
 \end{equation*}
 The Frenet formula is expressed as (see \cite{wj} for more details)
\begin{equation}\label {f-eq}
	\left[
	\begin{array}{c}
		 \mathbf{T}_s\\
		\mathbf{N}_s \\
		\mathbf{B}_s
	\end{array}
	\right]
	=
	\left[
	\begin{array}{ccc}
		0 & \kappa & 0 \\
		\eta_3\kappa & 0 &  \tau \\
		0 & \eta_1 \tau & 0
	\end{array}
	\right]
	\left[
	\begin{array}{c}
		 \mathbf{T} \\
		\mathbf{N} \\
		\mathbf{B}
	\end{array}
	\right],
\end{equation}
where $\kappa,\tau$ are referred to as the curvature and torsion of  $\mathbf r$, respectively.

{\bf Case 2.}  If $\langle\mathbf{T}_s, \mathbf{T}_s\rangle_L=0$, the principal normal vector field $\mathbf{N}$ is defined as $\mathbf{T}_s$. The binormal vector field $\mathbf{B}$ is the unique light-like vector field perpendicular to
$T$ such that $\langle N,B\rangle_L=1$. The Frenet formula  in this case is given by (refer to \cite{wj} for further information)
\begin{equation}\label {f-eq-light}
	\left[
	\begin{array}{c}
		 \mathbf{T}_s\\
		\mathbf{N}_s \\
		\mathbf{B}_s
	\end{array}
	\right]
	=
	\left[
	\begin{array}{ccc}
		0 & 1 & 0 \\
		0 & \bar{\tau} &  0 \\
		-1 & 0 & -\bar{\tau}
	\end{array}
	\right]
	\left[
	\begin{array}{c}
		 \mathbf{T} \\
		\mathbf{N} \\
		\mathbf{B}
	\end{array}
	\right].
\end{equation}
\subsection{The Frenet formulas on light-cone}
 Let us consider the Frenet-Serret type formulae of curve $\mathbf{r}$ on light-cone $LC$ in $\E_1^3$.
 In this paper, our primary focus is on curves that lie on the light-cone. It is important to highlight that vectors orthogonal to a light-like vector can only be either light-like or space-like. Furthermore, two light-like vectors are orthogonal to each other if and only if they are linearly dependent. Therefore, throughout the following sections, we will always assume that the unit tangent vector  $\mathbf{T}$ is space-like and $\displaystyle\frac{ds}{dp}\neq0$.

Since $\mathbf{r}(p)$ is the light-like vector field, $\displaystyle \frac{d\mathbf{r}}{dp}$ is a space-like vector field perpendicular to $\mathbf{r}(p)$. The normal vector field $\mathbf{Y}$ is the unique light-like vector field perpendicular to $\mathbf{T}$ such that $\langle\mathbf{r},\mathbf{Y}\rangle_L=1$. Then the Frenet formula can be expressed by (see \cite{liu2004} for more details)
\begin{equation}\label{FS-eq-light}
	\left[
	\begin{array}{c}
		 \mathbf{r}_s\\
		\mathbf{T}_s \\
		\mathbf{Y}_s
	\end{array}
	\right]
	=
	\left[
	\begin{array}{ccc}
		0 & 1 & 0 \\
	k_g & 0 &  -1 \\
		0 & -k_g & 0
	\end{array}
	\right]
	\left[
	\begin{array}{c}
	\mathbf{r} \\
		\mathbf{T} \\
		\mathbf{Y}
	\end{array}
	\right],
\end{equation}
where $k_g=\langle\mathbf{r}_{ss},\mathbf{Y}\rangle_L.$

If $\langle\mathbf{T}_s, \mathbf{T}_s\rangle_L\neq0$, by \eq{f-eq} and \eq{FS-eq-light}, we derive the following relationships
\begin{equation*}
  k_g=-\eta_2\frac{k^2}{2},\qquad \tau^2\kappa^2=\kappa_s^2.
\end{equation*}
On the other hand, when $\langle\mathbf{T}_s, \mathbf{T}_s\rangle_L=0$, combining of \eq{f-eq-light} and \eq{FS-eq-light} gives
 $k_g=0$ and $\bar{\tau}=0$.

In fact, the right half of a light-cone, denoted as $LC^*$, can be represented using spherical coordinates as $\displaystyle (\psi, \psi\cos\theta, \psi\sin\theta)$ with the constraint $\psi>0$,
and the curve $\mathbf{r}$ on light-cone $LC^*$ can be described by $\psi=\psi(\theta)$. A direct computation leads to
\begin{align*}
\mathbf{T}&\;=\;\frac{1}{\psi}\left(\psi_{\theta}, \psi_{\theta}\cos\theta-\psi\sin\theta, \psi_{\theta}\sin\theta+\psi\cos\theta\right),\\
\mathbf{Y}&\;=\;\frac{1}{2\psi}\Big(-\left(1+\frac{\psi_{\theta}^2}{\psi^2}\right), \left(1-\frac{\psi_{\theta}^2}{\psi^2}\right)\cos\theta+\frac{2\psi_{\theta}}{\psi}\sin\theta, \\ &\qquad\qquad\qquad\qquad\qquad\qquad-\frac{2\psi_{\theta}}{\psi}\cos\theta+\left(1-\frac{\psi_{\theta}^2}{\psi^2}\right)\sin\theta\Big),
\end{align*}
and then
\begin{align}\label{rem-k_g}
\frac{ds}{d\theta}=\psi,\qquad k_g=-\frac{\psi^2+3\psi_{\theta}^2-2\psi_{\theta\theta}\psi}{2\psi^4}.
\end{align}
\begin{prop}\label{prop-plane}
 The curve $\mathbf{r}$ represents the intersection of a plane $P(\mathbf{v},1)$ and $LC^*$ if and only if the curvature $k_g$ remains constant, with its value given by $\displaystyle k_g=\frac{\langle \mathbf v,\mathbf v\rangle_L}{2}$.
 Specifically, if $\mathbf{r}$ is an ellipse, $k_g<0$; if $\mathbf{r}$ is a parabola, $k_g=0$; and if $\mathbf{r}$ is a hyperbola, $k_g>0$.
\end{prop}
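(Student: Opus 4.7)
The plan is to leverage the light-cone Frenet frame $\{\mathbf{r},\mathbf{T},\mathbf{Y}\}$ together with \eqref{FS-eq-light} and repeatedly differentiate the planarity condition $\langle \mathbf{r},\mathbf{v}\rangle_L=1$ along the curve. Since $\mathbf{v}$ is a fixed vector, a first differentiation in $s$ gives $\langle \mathbf{T},\mathbf{v}\rangle_L=0$. A second differentiation, combined with $\mathbf{T}_s=k_g\mathbf{r}-\mathbf{Y}$, produces $\langle \mathbf{Y},\mathbf{v}\rangle_L=k_g$. A third differentiation, using $\mathbf{Y}_s=-k_g\mathbf{T}$, then collapses to $(k_g)_s=0$, so planarity forces $k_g$ to be constant.

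To identify the constant I would expand $\mathbf{v}=a\mathbf{r}+b\mathbf{T}+c\mathbf{Y}$ and invert using the non-standard pairings $\langle \mathbf{r},\mathbf{r}\rangle_L=\langle \mathbf{Y},\mathbf{Y}\rangle_L=0$, $\langle \mathbf{r},\mathbf{Y}\rangle_L=1$, $\langle \mathbf{T},\mathbf{T}\rangle_L=1$. The three scalar identities above then force $a=k_g$, $b=0$, $c=1$, i.e.\ $\mathbf{v}=k_g\mathbf{r}+\mathbf{Y}$, whence $\langle \mathbf{v},\mathbf{v}\rangle_L=2k_g$. For the converse, assume $k_g$ is constant and \emph{define} $\mathbf{v}:=k_g\mathbf{r}+\mathbf{Y}$; differentiating and applying \eqref{FS-eq-light} gives $\mathbf{v}_s=k_g\mathbf{T}-k_g\mathbf{T}=0$, so $\mathbf{v}$ is a fixed vector in $\E^3_1$, and $\langle \mathbf{r},\mathbf{v}\rangle_L=k_g\langle\mathbf{r},\mathbf{r}\rangle_L+\langle\mathbf{r},\mathbf{Y}\rangle_L=1$ automatically places $\mathbf{r}$ in $P(\mathbf{v},1)$.

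The conic-type classification is then inherited from the causal character of the slicing plane. If $\langle \mathbf{v},\mathbf{v}\rangle_L<0$, then $\mathbf{v}$ is time-like and $P(\mathbf{v},1)$ is space-like in the paper's convention, so the cross-section with $LC^*$ is compact and is therefore an ellipse, with $k_g<0$. If $\langle \mathbf{v},\mathbf{v}\rangle_L=0$, the plane is tangent to $LC$ along a null ruling and its intersection with $LC^*$ degenerates to a parabola, giving $k_g=0$. If $\langle \mathbf{v},\mathbf{v}\rangle_L>0$, the plane is time-like and cuts each half of the cone in a hyperbola, giving $k_g>0$. The only mildly delicate step is the expansion $\mathbf{v}=k_g\mathbf{r}+\mathbf{Y}$: the light-cone frame is a null coframe whose two "radial'' vectors have zero self-pairing rather than an orthonormal basis, so the inversion formulas differ from the Euclidean version and the pairing conventions must be tracked carefully rather than imported blindly from the standard Frenet setting.
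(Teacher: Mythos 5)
Your proof is correct, but it takes a genuinely different route from the paper. The paper works entirely in the spherical parametrization $(\psi,\psi\cos\theta,\psi\sin\theta)$: it solves the plane equation for $\psi=1/(-v_1+v_2\cos\theta+v_3\sin\theta)$, substitutes into the explicit curvature formula \eqref{rem-k_g} to read off $k_g=\langle\mathbf{v},\mathbf{v}\rangle_L/2$, and for the converse argues that when $k_g$ is constant this family (with $-c_1^2+c_2^2+c_3^2=2k_g$) is the general solution of the curvature ODE, hence the curve is planar. You instead argue frame-intrinsically with \eqref{FS-eq-light}: differentiating $\langle\mathbf{r},\mathbf{v}\rangle_L=1$ three times gives $\langle\mathbf{T},\mathbf{v}\rangle_L=0$, $\langle\mathbf{Y},\mathbf{v}\rangle_L=k_g$, and then $(k_g)_s=0$; expanding $\mathbf{v}$ in the null frame (your pairings $\langle\mathbf{r},\mathbf{r}\rangle_L=\langle\mathbf{Y},\mathbf{Y}\rangle_L=0$, $\langle\mathbf{r},\mathbf{Y}\rangle_L=1$, $\langle\mathbf{T},\mathbf{T}\rangle_L=1$ are the right ones, since $\mathbf{T}$ is assumed space-like and orthogonal to both $\mathbf{r}$ and $\mathbf{Y}$) identifies $\mathbf{v}=k_g\mathbf{r}+\mathbf{Y}$ and hence $\langle\mathbf{v},\mathbf{v}\rangle_L=2k_g$; and for the converse you define the conserved vector $\mathbf{v}:=k_g\mathbf{r}+\mathbf{Y}$ directly. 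Your argument is coordinate-free and arguably cleaner for the converse, since it replaces the paper's ``general solution of the ODE'' step by an explicit first integral; the paper's coordinate computation, on the other hand, produces the explicit formula $\psi=1/(-c_1+c_2\cos\theta+c_3\sin\theta)$ that is reused later (e.g.\ in the proof of Theorem \ref{thm-ode}). One small inaccuracy in your classification step: when $\langle\mathbf{v},\mathbf{v}\rangle_L=0$ the plane $P(\mathbf{v},1)$ is not tangent to $LC$ along a null ruling --- the tangent plane is $P(\mathbf{v},0)$, whose intersection with the cone is a null line; $P(\mathbf{v},1)$ is parallel to that tangent plane, which is precisely why the section is a (nondegenerate) parabola. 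The conclusion is unaffected, and the remaining two cases (time-like $\mathbf{v}$: compact section, ellipse, $k_g<0$; space-like $\mathbf{v}$: hyperbola, $k_g>0$) match the paper, which states this classification with the same brevity.
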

\begin{proof}
  Assume $\mathbf{v}=(v_1,v_2,v_3)$, and then the equation of the plane is  $-v_1\psi+v_2\psi\cos\theta+v_3\sin\theta=1$. Solving for $\psi$,  we find
  \begin{equation*}
    \psi=\frac{1}{-v_1+v_2\cos\theta+v_3\sin\theta}.
  \end{equation*}
  Substituting this expression into the formula for the curvature \eqref{rem-k_g} and performing the necessary calculations, we obtain $\displaystyle k_g=\frac{\langle \mathbf v,\mathbf v\rangle_L}{2}$.

  On the other hand, the nature of $\mathbf{v}$ determines the type of curve $\mathbf{r}$. If $\mathbf{v}$ is time-like, $\mathbf{r}$ is an ellipse; if $\mathbf{v}$ is light-like, $\mathbf{r}$ is a parabola; and if $\mathbf{v}$ is space-like, $\mathbf{r}$ is a hyperbola.

Conversely, if $k_g$ is constant, we can select three constants $c_1$, $c_2$ and $c_3$ (not all zero) such that $\displaystyle -c_1^2+c_2^2+c_3^2=2k_g$. Then $\displaystyle\psi=\frac{1}{-c_1+c_2\cos\theta+c_3\sin\theta}$ is the general solution of  the curvature equation \eqref{rem-k_g}. This, in turn, implies that $\mathbf{r}$ is a planar curves.
\end{proof}

In $\E_1^3$, the rotation matrices around the $x$-axis, $z$-axis and light-like axis $(1,1,0)$ are respectively given by
\begin{eqnarray}\label{RM}
  \begin{aligned}
&\left(
  \begin{array}{ccc}
    1 & 0 & 0 \\
    0 & \cos\omega & -\sin\omega \\
    0 & \sin\omega & \cos\omega \\
  \end{array}
\right),\\ \\
&\left(
  \begin{array}{ccc}
    \cosh\omega & \sinh\omega & 0 \\
    \sinh\omega & \cosh\omega & 0\\
    0 & 0 & 1 \\
  \end{array}
\right),\\ \\
&\left(
  \begin{array}{ccc}
    1+\frac{\omega^2}{2} & -\frac{\omega^2}{2} & \omega \\
    \frac{\omega^2}{2} & 1-\frac{\omega^2}{2} & \omega\\
    \omega & -\omega & 1 \\
  \end{array}
\right).
\end{aligned}
\end{eqnarray}
The corresponding Killing vector fields are
\begin{equation*}
   -z\partial_y+y\partial_z,\qquad y\partial_x+x\partial_y, \qquad z\partial_x+z\partial_y+(x-y)\partial_z.
\end{equation*}
\begin{lem}\label{lem=kf}
  On light-cone $(\psi, \psi\cos\theta, \psi\sin\theta)$, the  vector fields
  $\partial_{\theta}$, $\psi\cos\theta\partial_{\psi}-\sin\theta\partial_{\theta}$, $\psi\sin\theta\partial_{\psi}-(1-\cos\theta)\partial_{\theta}$ are Killing vector fields.
\end{lem}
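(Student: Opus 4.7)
The plan is to recognize each of the three listed vector fields on $LC^*$ as the restriction of an ambient Killing vector field of the Minkowski space $\E_1^3$ displayed immediately before the lemma, namely
\[
X_1 = -z\partial_y + y\partial_z,\qquad X_2 = y\partial_x + x\partial_y,\qquad X_3 = z\partial_x + z\partial_y + (x-y)\partial_z.
\]
Each $X_i$ generates the corresponding one-parameter subgroup of Lorentz isometries in \eqref{RM}, and each of those subgroups preserves the quadratic form $\langle\mathbf{x},\mathbf{x}\rangle_L$ and hence maps $LC^*$ into itself. Consequently the $X_i$ are tangent to $LC^*$, and their restrictions generate one-parameter groups of symmetries of the induced pseudo-metric on $LC^*$; that is exactly the content of being Killing in this setting.

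The first step is to write an arbitrary tangent vector at $(\psi,\psi\cos\theta,\psi\sin\theta)$ as $A\partial_\psi + B\partial_\theta$. Pushing it into $\E_1^3$ through the parametrization yields
\[
\bigl(A,\; A\cos\theta - B\psi\sin\theta,\; A\sin\theta + B\psi\cos\theta\bigr).
\]
Matching this with $X_i$ evaluated at $(x,y,z)=(\psi,\psi\cos\theta,\psi\sin\theta)$ gives an overdetermined $3\times 2$ linear system in $(A,B)$, which is consistent because the right-hand side is tangent to $LC^*$. Solving yields $(A,B)=(0,1)$ for $X_1$, $(\psi\cos\theta,-\sin\theta)$ for $X_2$, and $(\psi\sin\theta,\,-(1-\cos\theta))$ for $X_3$, matching the three vector fields in the lemma.

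As an independent verification I would compute the induced pseudo-metric and check the Killing equation directly. Pulling $-dx^2+dy^2+dz^2$ back through the parametrization gives $\bar g=\psi^2\,d\theta^2$. A short calculation shows that for $X=A\partial_\psi+B\partial_\theta$ the vanishing of $\mathcal{L}_X\bar g$ is equivalent to the two scalar identities $A+\psi\,B_\theta=0$ and $B_\psi=0$. All three candidates in the lemma satisfy both identities by inspection: for $X_1$ one has $A=0$ and $B$ constant; for $X_2$, $A=\psi\cos\theta$ and $B_\theta=-\cos\theta$; for $X_3$, $A=\psi\sin\theta$ and $B_\theta=-\sin\theta$; and in every case $B_\psi=0$.

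The only mildly subtle point is that $\bar g$ is degenerate since $LC^*$ is a null hypersurface of $\E_1^3$, so ``Killing vector field'' must be understood as the infinitesimal generator of a flow of $\bar g$-preserving diffeomorphisms, equivalently as a solution of $\mathcal{L}_X\bar g=0$. The identification through the ambient isometries in \eqref{RM} sidesteps this subtlety cleanly, so I would structure the proof around it and use the Lie-derivative computation only as a cross-check.
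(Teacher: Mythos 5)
Your proof is correct and takes essentially the same route as the paper: the paper likewise proves the lemma by expressing the ambient Lorentz Killing fields $-z\partial_y+y\partial_z$, $y\partial_x+x\partial_y$, $z\partial_x+z\partial_y+(x-y)\partial_z$ in the coordinates $(\psi,\theta)$ on $LC^*$ and identifying them with the three stated vector fields, which is exactly your matching computation written in the other direction. Your additional cross-check via $\mathcal{L}_X\bar g=0$ for the degenerate induced metric $\bar g=\psi^2\,d\theta^2$ is a harmless extra verification not present in the paper.
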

\begin{proof} A direct computation shows
\begin{equation*}
  \partial_{\psi}=\partial_x+\cos\theta\partial_y+\sin\theta\partial_z,\qquad \partial_{\theta}=-\psi\sin\theta\partial_y+\psi\cos\theta\partial_z.
\end{equation*}
On the other hand,
\begin{align*}
-z\partial_y+y\partial_z&\;=\;-\psi\sin\theta\partial_y+\psi\cos\theta\partial_z,\\
y\partial_x+x\partial_y&\;=\;\psi\cos\theta\partial_x+\psi\partial_y,\\
z\partial_x+z\partial_y+(x-y)\partial_z&\;=\;\psi\sin\theta(\partial_x+\partial_y)+\psi(1-\cos\theta)\partial_z.
\end{align*}
It is easy to verify
\begin{align*}
-z\partial_y+y\partial_z&\;=\;\partial_{\theta},\\
y\partial_x+x\partial_y&\;=\;\psi\cos\theta\partial_{\psi}-\sin\theta\partial_{\theta},\\
z\partial_x+z\partial_y+(x-y)\partial_z&\;=\;\psi\sin\theta\partial_{\psi}-(1-\cos\theta)\partial_{\theta}.
\end{align*}
\end{proof}
For the convenience of  the subsequent discussion, we introduce the following notations in advance. Let $\Pi$ represent the elliptic integral of the third kind, defined as
    \begin{align*}
     \Pi(\varphi,\alpha^2,k)&=\int^{\varphi}_0\frac{d\theta}{(1-\alpha^2\sin^2\theta)\sqrt{1-k^2\sin^2\theta}}\\
     &=\int_0^{\sin\varphi}\frac{dt}{(1-\alpha^2t^2)\sqrt{(1-k^2t^2)(1-t^2)}}.
    \end{align*}
Additionally, the elliptic integral of the first kind and the complete elliptic integral of the first kind are respectively expressed as
\begin{align*}
  F(\varphi,k)= \Pi\left(\varphi, 0,k\right),\qquad K(k)=F\left(\frac{\pi}{2},k\right).
\end{align*}
\section{The evolutions of curves on light-cone}\label{sec-ev}
Let us next investigate the evolutionary processes of curves on $LC^*$. We study a time-dependent family of smooth, local embedded, curves on $LC^*$, evolving by the flow:
\begin{equation*}
  \frac{\partial \mathbf{r}}{\partial t}=U\mathbf{r}+V\mathbf{Y}+W\mathbf{T}.
\end{equation*}
Then $\langle\mathbf{r}, \mathbf{r}\rangle_L=0$ and $\langle\mathbf{r},\mathbf{Y}\rangle_L=1$ gives $V=0$, which simplifies the evolution equation to
\begin{equation*}
  \frac{\partial \mathbf{r}}{\partial t}=U\mathbf{r}+W\mathbf{T}.
\end{equation*}
According to $g^2=\langle\mathbf{r}_p,\mathbf{r}_p\rangle_L$ and \eq{FS-eq-light}, we find
\begin{align*}
  gg_t&=\;\langle\mathbf{r}_{pt},\mathbf{r}_p\rangle_L\\
      &=\;g^2\langle(U\mathbf{r}+WT)_s,\mathbf{T}\rangle_L\\
      &=\;g^2(U+W_s).
\end{align*}
From this, we obtain
\begin{equation}\label{light-gt}
  \frac{g_t}{g}=U+W_s.
\end{equation}
Furthermore, we have the following derivatives
\begin{align*}
  \mathbf{T}_t&=\;(U_s+Wk_g)\mathbf{r}-W\mathbf{Y},\\
  \mathbf{Y}_t&=\;-(W_s+Wk_g)\mathbf{T}-U\mathbf{Y}.
\end{align*}
Concurrently, we compute
\begin{align*}
  (k_g)_t=\left(\langle\mathbf{T}_s,\mathbf{Y}\rangle_L\right)_t=\langle\mathbf{T}_{st},\mathbf{Y}\rangle_L+\langle\mathbf{T}_s,\mathbf{Y}_t\rangle_L,
\end{align*}
which yields
\begin{equation}\label{light-kt}
  (k_g)_t=U_{ss}-2k_gU+W(k_g)_s.
\end{equation}
Using \eqref{light-gt} and \eqref{light-kt}, we establish the following lemma
\begin{lem}\label{lem-kf-c}
    On $LC^*$, a vector field $\mathbf{J}=U\mathbf{r}+WT$  is designated as a Killing vector field along $\mathbf{r}$ if and only if it satisfies the following conditions:
  \begin{eqnarray*}
  \begin{aligned}
    &U+W_s=0,\\
    &U_{ss}-2k_gU+W(k_g)_s=0.
  \end{aligned}
  \end{eqnarray*}
  \end{lem}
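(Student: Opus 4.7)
The plan is to recognize that a vector field $\mathbf{J} = U\mathbf{r} + W\mathbf{T}$ along a curve on $LC^*$ is Killing along $\mathbf{r}$ precisely when the one-parameter deformation of $\mathbf{r}$ it generates preserves the intrinsic geometry of the curve. Since the Frenet system \eqref{FS-eq-light} shows that a curve on $LC^*$ is determined up to an ambient isometry by its arc-length metric $g$ together with its light-cone curvature $k_g$, the Killing condition is equivalent to requiring $g_t = 0$ and $(k_g)_t = 0$ under the flow $\mathbf{r}_t = \mathbf{J}$.

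From this identification the proof becomes a direct substitution into the evolution identities already established. Imposing $g_t = 0$ in \eqref{light-gt} yields $U + W_s = 0$, and imposing $(k_g)_t = 0$ in \eqref{light-kt} gives $U_{ss} - 2 k_g U + W (k_g)_s = 0$. This is the forward implication. For the converse, if both PDEs are satisfied then reading \eqref{light-gt} and \eqref{light-kt} backwards forces $g_t = 0$ and $(k_g)_t = 0$, so by the fundamental theorem for curves on the light-cone the deformation moves $\mathbf{r}$ through a one-parameter family of congruent curves, which means that $\mathbf{J}$ arises as the restriction along $\mathbf{r}$ of an infinitesimal isometry, i.e.\ a Killing vector field. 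The three explicit Killing fields listed in Lemma \ref{lem=kf} confirm that this class is nonempty and consistent with the stated form $U\mathbf{r} + W\mathbf{T}$.

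The only genuinely non-mechanical step is the interpretive one: the equivalence between being Killing along $\mathbf{r}$ and preserving the pair $(g, k_g)$ relies on the rigidity statement that $(g, k_g)$ is a complete local invariant for curves on $LC^*$. Once this is granted, the remainder of the argument is purely algebraic and involves no further analytic input beyond the two evolution formulas that precede the lemma.
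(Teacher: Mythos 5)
Your proposal is correct and follows essentially the same route as the paper, which states the lemma as an immediate consequence of the evolution equations \eqref{light-gt} and \eqref{light-kt}: the Killing condition along $\mathbf{r}$ is identified with $g_t=0$ and $(k_g)_t=0$ under the deformation $\mathbf{r}_t=\mathbf{J}$, and those two vanishing conditions are precisely the stated equations. Your additional remark that the converse rests on $(g,k_g)$ being a complete local invariant of curves on $LC^*$ only makes explicit the rigidity fact the paper leaves implicit in its Frenet framework \eqref{FS-eq-light}.
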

\subsection{Harnack inequality on light-cone}\label{subsec-hk}
In particular, for the geometric heat equation on light-cone
\begin{equation*}
  \frac{\partial \mathbf{r}}{\partial t}=k_g\mathbf{r},
\end{equation*}
we have
\begin{equation*}
\frac{g_t}{g}=k_g,\qquad (k_g)_t=(k_g)_{ss}-2k_g^2.
\end{equation*}

Now let $k=-k_g$, and we obtain
\begin{equation}\label{klight-evo}
\frac{g_t}{g}=-k,\qquad k_t=k_{ss}+2k^2.
\end{equation}
\begin{lem}
If $k$ satisfies \eq{klight-evo}, then
$\displaystyle k_{\min}(t)=\inf_s\{k(s,t)\}$ is a nondecreasing function.
\end{lem}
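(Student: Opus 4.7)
The plan is to apply the parabolic maximum principle (in the form of Hamilton's trick) to the reaction--diffusion equation $k_t = k_{ss} + 2k^2$. The idea is very standard: at a spatial minimum of $k(\cdot,t)$, the first spatial derivative vanishes and the second spatial derivative is nonnegative, so the evolution equation forces $k_t \geq 2k^2 \geq 0$ there. Hence the pointwise minimum value cannot decrease in time.

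More precisely, I would proceed in three short steps. First, argue that for each $t$ in the time interval of existence, the infimum $k_{\min}(t) = \inf_s k(s,t)$ is attained at some point $s_0 = s_0(t)$; this is immediate when the evolving curves are closed (compact), which is the case of interest in the present paper. Second, at the minimum point one has $k_s(s_0(t),t) = 0$ and $k_{ss}(s_0(t),t) \geq 0$, and substituting these into the evolution equation \eqref{klight-evo} gives
\begin{equation*}
k_t(s_0(t),t) \;=\; k_{ss}(s_0(t),t) + 2\, k(s_0(t),t)^2 \;\geq\; 2\, k_{\min}(t)^2 \;\geq\; 0.
\end{equation*}
Third, invoke Hamilton's trick (i.e.\ the fact that although $k_{\min}$ is only Lipschitz in $t$ in general, its lower Dini derivative equals $k_t(s_0(t),t)$ at any minimum point $s_0(t)$) to conclude that the Dini derivative of $k_{\min}$ is nonnegative, and therefore $k_{\min}$ is nondecreasing.

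The main (and essentially only) obstacle is the lack of classical differentiability of $t \mapsto k_{\min}(t)$, since the minimum may jump between different spatial points. I would handle this either by appealing directly to Hamilton's trick as a black box, or by a small regularization argument: compare $k$ with the spatially constant solution $\phi(t)$ of the ODE $\phi' = 2\phi^2$ with initial value $\phi(t_1) = k_{\min}(t_1) - \varepsilon$ via the parabolic maximum principle on $[t_1,t]$ (which is valid on any time subinterval where $\phi$ stays finite), obtain $k(s,t) \geq \phi(t) \geq \phi(t_1) = k_{\min}(t_1) - \varepsilon$, and let $\varepsilon \to 0$. Either route yields the desired monotonicity.
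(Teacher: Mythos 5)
Your argument is correct, and in fact the paper offers no proof of this lemma at all -- it is stated as a standard consequence of the maximum principle and the text moves straight on to the proof of the Harnack inequality -- so your proposal fills in exactly the kind of argument the author takes for granted. Two small points are worth making explicit. First, since the arc-length parameter evolves in time (the paper uses the commutator $[\partial_t,\partial_s]=k\partial_s$ in the Harnack computation), one should regard $k$ as a function of the fixed curve parameter $p$ and of $t$, with $k_t$ taken at fixed $p$; this causes no trouble for your argument, because at a spatial minimum $k_p=0$, so $k_s=0$ and $k_{ss}=g^{-2}k_{pp}\geq 0$, and the inequality $k_t\geq 2k^2\geq 0$ holds as you claim (note also that positivity of $k$ is not needed here, since $2k^2\geq 0$ in any case). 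Second, the attainment of the infimum does require compactness (closed curves) or some control at spatial infinity, a hypothesis the paper leaves implicit for its immersed solutions; you flag this honestly, and your alternative comparison argument with the spatially constant supersolution $\phi'=2\phi^2$, $\phi(t_1)=k_{\min}(t_1)-\varepsilon$, is a legitimate way to bypass the nondifferentiability of $k_{\min}$, with the only caveat that if $\phi$ were to blow up before time $t$ the comparison would force $k$ itself to blow up, contradicting smoothness of the solution on the given interval, so the conclusion $k_{\min}(t)\geq k_{\min}(t_1)$ survives. Either route (Hamilton's trick or comparison) is sound.
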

Let us here give the proof of Theorem \ref{thm-hk}.
\begin{proof}[Proof of Theorem \ref{thm-hk}]
Define the Harnack quantity
  \begin{equation*}
      Q:=\frac{k_{ss}}{k}-\frac{k_s^2}{k^2}+k.
  \end{equation*}
  Then
  \begin{equation*}
    k_{ss}=kQ+\frac{k_s^2}{k}-k^2,
  \end{equation*}
  and by \eqref{klight-evo} we have
  \begin{equation*}
    k_{t}=kQ+\frac{k_s^2}{k}+k^2.
  \end{equation*}
  Using the commutation relation $[\partial_t, \partial_s]=k\partial_s$, we have
  \begin{align*}
    k_{st}&=\;k_{ts}+kk_s=Q_sk+3Qk_s+\frac{k_s^3}{k^2}+kk_s,\\
    k_{sst}&=\;kQ_{ss}+4Q_sk_s+3kQ^2+(\frac{6k_s^2}{k}-k^2)Q+\frac{k_s^4}{k^3}-2k^3.
  \end{align*}
  Thus,
  \begin{align}\label{Q0-exp}
    Q_{t}&=\;\frac{k_{sst}}{k}-\frac{k_{ss}k_t}{k^2}-\frac{2k_{s}k_{st}}{k^2}+\frac{2k_s^2k_t}{k^3}+k_t\triangleq \sum\limits_{j=1}^5I_j,
  \end{align}
  where
  \begin{eqnarray*}
   \begin{aligned}
   I_1=&\frac{k_{sst}}{k}=Q_{ss}+\frac{4Q_sk_s}{k}+3Q^2+(\frac{6k_s^2}{k^2}-k)Q+\frac{k_s^4}{k^4}-2k^2,\\
   I_2=&-Q^2-\frac{2k_s^2}{k^2}Q-\frac{k_s^4}{k^4}+k^2,\\
   I_3=&-\frac{6k_s^2}{k^2}Q-\frac{2k_s}{k}Q_s-\frac{2k_s^4}{k^4}-\frac{2k_s^2}{k},\\
   I_4=&\frac{2k_s^2}{k^2}Q+\frac{k_s^4}{k^4}+\frac{2k_s^2}{k}\\
   I_5=&kQ+k^2+\frac{k_s^2}{k}.
  \end{aligned}
  \end{eqnarray*}
  Inserting these expressions into \eqref{Q0-exp}, we arrive at
  \begin{equation*}
     Q_t=Q_{ss}+\frac{2k_s}{k}Q_s+2Q^2+\frac{k_s^2}{k}\geq Q_{ss}+\frac{2k_s}{k}Q_s+2Q^2.
  \end{equation*}
  Let $q(t)=-\frac{1}{2(t-t_0)}$. Then $q(t)$ solves the associated ODE
  \begin{equation*}
       \frac{dq}{dt}=2q^2
  \end{equation*}
  with condition $\displaystyle \lim_{t\to t_0}q(t)=-\infty$ and so the maximum principle gives
  \begin{equation*}
      Q(s,t)\geq q(t), \qquad \forall t\in(t_0,T).
  \end{equation*}
\end{proof}
\subsection{A third order curvature flow}
If the curve remains unstretched during its motion, meaning that the distance between any two points on the curve (as measured along the curve itself) remains constant over time, then $s$ and $t$ can function as local coordinates on the light-cone. Utilizing equation \eq{light-gt}, we derive
\begin{equation*}
U+W_s=0,
\end{equation*}
which subsequently allows us to express equation \eq{light-kt} as
\begin{align*}
(k_g)_t=U_{ss}-2k_gU-(k_g)_s\partial^{-1}U.
\end{align*}
By choosing $U=(k_g)_s$, we obtain the KdV equation
\begin{align*}
(k_g)_t=(k_g)_{sss}-3k_g(k_g)_s.
\end{align*}
Assume the curvature $k_g$ is periodic with the parameter $s$, that is, $k_g(s+T,t)=k_g(s,t)$, where $T$ is its period.
We can verify that
\begin{align*}
\frac{d}{dt}\int_0^Tds=0, \quad \frac{d}{dt}\int_0^Tk_gds=0, \quad \frac{d}{dt}\int_0^Tk_g^2ds=0.
\end{align*}

\section{The soliton solutions for the third order curvature flow}\label{sec-soliton}
Now let us consider the solitons for the flow \eqref{soflow}.
Solitons of the geometric flow represent self-similar solutions that maintain their shape under the evolution induced by the flow. To formulate the definition of a soliton, let $N^n$ be a $n$-dimensional Riemannian manifold with metric $g$, equipped with a Killing vector field $\mathbf{J}$ related to an isometry group $\varphi:N\times\R\to N$.
   The isometry group $\varphi$ characterizes transformations that preserve the metric $g$ and hence the geometric properties of the manifold. The relationship between $\mathbf{J}$ and $\varphi$ is given by the following differential equation and initial condition
  \begin{align*}
      \frac{d\varphi(x,t)}{dt}&=\;\mathbf{J}(\varphi(x,t)),\\
      \varphi(x,0)&=x.
  \end{align*}
  Here, $\frac{d\varphi(x,t)}{dt}$ denotes the time derivative of the point $\varphi(x,t)$ in the direction of the flow induced by the Killing vector field $\mathbf{J}$.
  The initial condition $\varphi(x,0)=x$ signifies that at time $t=0$, the isometry group leaves each point $x$ in $N^n$ unchanged.

  A curve $\mathbf{r}$ on $LC^*$ is a soliton of the geometric flow \eqref{lightflow} if, under the action of the isometry group $\varphi$ parameterized by the flow induced by $\mathbf{J}$, the curve evolves in such a way that its shape remains constant up to reparametrization. This property ensures that the soliton maintains its geometric characteristics throughout the evolution process.

 Hence, by using $U=(k_g)_s$ and $W=-k_g$, we have
 \begin{equation*}
    (k_g)_{sss}-3k_g(k_g)_s=0.
 \end{equation*}
 Through integration, we obtain
 \begin{equation*}
   (k_g)_{ss}-\frac{3}{2}k_g^2+\frac{\lambda}{2}=0, \quad \left((k_g)_s\right)^2-k_g^3+\lambda k_g+\mu=0,
 \end{equation*}
 where $\lambda$ and $\mu$ are the integration constants.
 \begin{rem} The trivial solution to \eqref{solitoneq} occurs when $k_g$ is constant. Through direct computation, it is evident that \eqref{solitoneq} possesses nontrivial periodic solutions if and only if the cubic equation $x^3-\lambda x-\mu=0$ has three distinct real roots. This condition is satisfied when $\displaystyle \lambda>3\left(\frac{\mu}{2}\right)^{2/3}$.
 \end{rem}
 Thus, the three solutions  of cubic equation $x^3-\lambda x-\mu=0$ are
 \begin{align*}
   x_1&\;=\;-\frac{2\sqrt{3\lambda}}{3}\cos\frac{\theta}{3},\\
   x_2&\;=\;\frac{2\sqrt{3\lambda}}{3}\cos\frac{\theta+\pi}{3},\\
   x_3&\;=\;\frac{2\sqrt{3\lambda}}{3}\cos\frac{\theta-\pi}{3},
 \end{align*}
 where $\displaystyle \theta=\arccos\left(-\frac{\mu}{2}\cdot\left(\frac{3}{\lambda}\right)^{3/2}\right)$ and $x_1<x_2<x_3$.
\begin{rem} If $\displaystyle \lambda>3\left(\frac{\mu}{2}\right)^{2/3}$, the periodic solutions for \eqref{solitoneq}  can be expressed in terms of the Jacobi elliptic sine function
\begin{equation*}
  k_g=x_1+(x_2-x_1)\mathrm{sn}^2\left(\frac{\sqrt{x_3-x_1}}{2}s,\;\sqrt{\frac{x_2-x_1}{x_3-x_1}}\right).
\end{equation*}
\end{rem}
\begin{rem} By  invoking \eqref{rem-k_g}, the task of identifying periodic solitons for the flow described by \eqref{soflow} is now equivalent to solving the equation
\begin{equation*}
   \psi_{ss}-\frac{\psi_{s}^2+1}{2\psi}-\left(x_1+(x_2-x_1)\mathrm{sn}^2\left(\frac{\sqrt{x_3-x_1}}{2}s,\;\sqrt{\frac{x_2-x_1}{x_3-x_1}}\right)\right)\psi=0.
\end{equation*}
In what follows, we employ the Killing vector field to derive analytic solutions for this second-order nonlinear differential equation.
\end{rem}
\subsection{Solitons rotating around a time-like axis}\label{subsec-time}
In fact, a periodic curve can be generated  by rotating the initial curve through one full period around an axis. The crucial step lies in determining the appropriate rotation matrix.
If the rotation axis is time-like, we can always assume, through a suitable rotation, that this axis aligns with the $x$-axis. Consequently, the rotation matrix can be expressed as
\begin{equation*}
\left(
  \begin{array}{ccc}
    1 & 0 & 0 \\
    0 & \cos\omega & -\sin\omega \\
    0 & \sin\omega & \cos\omega \\
  \end{array}
\right).
\end{equation*}
\begin{prop}\label{prop-mug0}
If the solitons rotate around the $x$-axis, then $\mu>0$ and $\psi$  is given by $\displaystyle\psi=-\frac{k_g}{\sqrt{\mu}}$.
\end{prop}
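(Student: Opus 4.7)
The plan is to exploit the soliton characterization that, for any soliton of flow~\eqref{soflow}, the vector field $\mathbf{J}=(k_g)_s\mathbf{r}-k_g\mathbf{T}$ along the curve is a Killing vector field of the ambient $\E^3_1$ restricted to $LC^*$. Indeed, Lemma~\ref{lem-kf-c} applied with $U=(k_g)_s$ and $W=-k_g$ has its first condition $U+W_s=0$ satisfied automatically, while its second condition reduces to $(k_g)_{sss}-3k_g(k_g)_s=0$, which is the third-order soliton equation behind \eqref{solitoneq}. The hypothesis that $\mathbf{r}$ rotates around the $x$-axis then forces this Killing field to coincide with $\omega_1\partial_\theta$ for some nonzero constant $\omega_1$ (the angular velocity), so that one has the pointwise identity
\[
(k_g)_s\,\mathbf{r}-k_g\,\mathbf{T}=\omega_1\,\partial_\theta.
\]

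Next, I would decode this identity in the ambient coordinates using $\mathbf{r}=(\psi,\psi\cos\theta,\psi\sin\theta)$, $\partial_\theta=(0,-\psi\sin\theta,\psi\cos\theta)$ and the explicit formula for $\mathbf{T}$ recorded in Section~\ref{sec-back}; the substitution $\psi_\theta=\psi\psi_s$ that comes from $ds/d\theta=\psi$ is the central bookkeeping device. The $x$-component of the identity reads $(k_g)_s\psi-k_g\psi_s=0$, equivalently $(k_g/\psi)_s=0$, so $k_g=K\psi$ for some constant $K$. Plugging this back into the $y$-component then collapses to $K\sin\theta=-\omega_1\sin\theta$, which forces $K=-\omega_1$ and yields $\psi=-k_g/\omega_1$.

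It remains to identify $\omega_1^2=\mu$. I would substitute $k_g=-\omega_1\psi$ into the second-order ODE $\psi_{ss}=(\psi_s^2+1)/(2\psi)+k_g\psi$ coming from \eqref{rem-k_g}, eliminate $\psi_{ss}$ by means of the first conservation law $(k_g)_{ss}=\tfrac{3}{2}k_g^2-\tfrac{\lambda}{2}$ in \eqref{solitoneq}, and multiply through by $k_g$; a short simplification produces
\[
((k_g)_s)^2=k_g^3-\lambda k_g-\omega_1^2.
\]
Comparison with the second integrated equation $((k_g)_s)^2=k_g^3-\lambda k_g-\mu$ in~\eqref{solitoneq} then forces $\omega_1^2=\mu$, so in particular $\mu>0$, and selecting the root $\omega_1=\sqrt{\mu}$ compatible with $\psi>0$ delivers the stated formula $\psi=-k_g/\sqrt{\mu}$. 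I expect the only real delicacy to lie in converting cleanly between the $s$- and $\theta$-derivatives and in recognizing that the two integrated soliton equations combine with the curvature ODE for $\psi$ to give exactly the polynomial identity needed to pin down $\omega_1^2$; once the Killing-field dictionary is in place, both the identification of $K$ and the matching of coefficients are routine.
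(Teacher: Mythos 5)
Your argument is correct, and its first half coincides with the paper's own proof: both start from Lemma \ref{lem-kf-c}, observe that $\mathbf{J}=(k_g)_s\mathbf{r}-k_g\mathbf{T}$ is a Killing field along the soliton, identify it (via Lemma \ref{lem=kf}) with a constant multiple of the rotation generator $\partial_\theta$, and extract $\psi=-k_g/\omega_1$; the paper does this by pairing with $\mathbf{T}$ under the Lorentzian product, you by reading off ambient components with $\psi_\theta=\psi\psi_s$, which is only a cosmetic difference. Where you genuinely diverge is in pinning down the constant and the sign of $\mu$. The paper forms the unit integral curve $\bar{\mathbf{T}}=\mathbf{J}/|\mathbf{J}|$, computes its curvature at a point where $(k_g)_s=0$ to get $\bar k_g=-\mu/(2k_g^2)$, and compares with the curvature $-1/(2\psi^2)$ of the latitude circle supplied by Proposition \ref{prop-plane}; you instead substitute $k_g=-\omega_1\psi$ into the curvature relation \eqref{rem-k_g} written in arclength, eliminate $(k_g)_{ss}$ with the first equation of \eqref{solitoneq}, and land on $\left((k_g)_s\right)^2=k_g^3-\lambda k_g-\omega_1^2$, whence $\omega_1^2=\mu$ by comparison with the second equation --- the computation checks out. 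Your route is more elementary: it needs no auxiliary integral curve, no Frenet computation of $\bar k_g$, and no critical point of $k_g$ (the paper's evaluation at $(k_g)_s=0$ tacitly presupposes one, harmless for the bounded periodic solutions of interest), while the paper's route exposes the geometric meaning of $\mu$ through the orbit circles and sets up the template reused for the light-like and space-like axes (Proposition \ref{prop-mu0} and the $z$-axis case). Both arguments share the same mild implicit assumptions: the rotation is nontrivial ($\omega_1\neq0$, else $k_g\equiv0$), and the final selection $\omega_1=\sqrt{\mu}$ rather than $-\sqrt{\mu}$ rests on $\psi>0$ together with $k_g<0$ for the relevant solutions --- the paper's choice of the positive root for $1/b$ glosses over exactly the same point, so you are not below its level of rigor.
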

\begin{proof}
By Lemma \ref{lem-kf-c} and \eqref{solitoneq}, we discover
  \begin{align*}
    \mathbf{J}=(k_g)_s\mathbf{r}-k_g\mathbf{T}
  \end{align*}
  constitutes a Killing vector field along the curve $\mathbf{x}$.
  Now we can employ the coordinates $\mathbf{r}(\theta,\psi)=(\psi,\psi\cos\theta,\psi\sin\theta)$,
  so that according to Lemma \ref{lem=kf} its equator gives the only integral curve around $x$-axis of $\mathbf{J}:\partial_{\theta}=b\mathbf{J}$.
  Taking the inner product of both sides of this equation with $\mathbf{T}$ generates
 \begin{align*}
    \psi=-bk_g.
  \end{align*}
  Let $\bar{\mathbf{r}}$ denote the  integral curve of vector field $\displaystyle \bar{\mathbf{T}}=\frac{\mathbf{J}}{\left|\mathbf{J}\right|}$. Specifically,
  \begin{align*}
     \bar{\mathbf{T}}=\frac{(k_g)_s}{k_g}\mathbf{r}-\mathbf{T}.
  \end{align*}
  Therefore at the point where $\left(k_g\right)_s=0$, we need to find the derivative
  \begin{align*}
    \bar{\mathbf{T}}_{\bar{s}}\;=&\frac{ds}{d\bar{s}}\left(\frac{(k_g)_{ss}k_g-k_g^3}{k_g^2}\mathbf{r}+\mathbf{N}\right).
  \end{align*}
  At that given point, we have $\mathbf{r}=\bar{\mathbf{r}}$, which implies
  \begin{align*}
     \frac{ds}{d\bar{s}}=-1.
  \end{align*}
  Accordingly, by \eqref{solitoneq} and $(k_g)_s=0$,
  \begin{align*}
    \bar{k}_g\;=&-\frac{(k_g)_{ss}k_g-k_g^3}{k_g^2}=-\frac{k_g^3-\lambda k_g}{2k_g^2}=-\frac{\mu}{2k_g^2}.
  \end{align*}
  On the other hand, based on Proposition \ref{prop-plane}, the curvature of the plane curve that is perpendicular to the $x$-axis is given by $\displaystyle \bar{k}_g=-\frac{1}{2\psi^2}$.
  Consequently, we can derive
  \begin{align*}
    \mu>0,\quad \mathrm{and} \quad \frac{1}{b}=\sqrt{\mu}.
  \end{align*}
\end{proof}
 The following theorem provides the detailed expression for $\omega$.
 \begin{thm}\label{thm-ang}
 If $\mu>0$, by a rotation, then the progression angle  $\Lambda^{\Theta}$ in one period of the curvature can be expressed as follows
 \begin{equation}\label{ang}
     \Lambda^{\Theta}=-2\sqrt{\mu}\int^{x_2}_{x_1}\frac{1}{x\sqrt{x^3-\lambda x-\mu}}dx.
 \end{equation}
 \end{thm}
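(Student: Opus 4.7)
The plan is to combine Proposition \ref{prop-mug0} with the coordinate expression $ds/d\theta = \psi$ from \eqref{rem-k_g}, and then convert the resulting arc-length integral into a curvature integral using the first-order quadrature in \eqref{solitoneq}. From Proposition \ref{prop-mug0} we already have the explicit identity $\psi = -k_g/\sqrt{\mu}$ after the time-like rotation axis has been aligned with the $x$-axis, so
$$\frac{d\theta}{ds} = \frac{1}{\psi} = -\frac{\sqrt{\mu}}{k_g}.$$
The progression angle in one period $T$ of the curvature is therefore
$$\Lambda^{\Theta} = \int_0^T \frac{d\theta}{ds}\, ds = -\sqrt{\mu}\int_0^T \frac{ds}{k_g}.$$

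Next I would change variables via $((k_g)_s)^2 = k_g^3 - \lambda k_g - \mu$. In one period, $k_g$ rises monotonically from $x_1$ to $x_2$ and then descends back to $x_1$; on the ascending half $(k_g)_s = +\sqrt{k_g^3 - \lambda k_g - \mu}$, and on the descending half $(k_g)_s = -\sqrt{k_g^3 - \lambda k_g - \mu}$. After substitution the two halves give identical contributions (the minus sign on the descending branch is absorbed by reversing the limits of integration), producing
$$\Lambda^{\Theta} = -2\sqrt{\mu}\int_{x_1}^{x_2}\frac{dk_g}{k_g\sqrt{k_g^3 - \lambda k_g - \mu}},$$
which is precisely \eqref{ang}.

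The only non-automatic point to check is that the integrand has no interior singularity, i.e.\ that $k_g$ does not vanish on $[x_1,x_2]$. This follows from Vieta's formulas applied to $x^3 - \lambda x - \mu = 0$: one reads off $x_1 + x_2 + x_3 = 0$ and $x_1 x_2 x_3 = \mu > 0$, which combined with $x_1 < x_2 < x_3$ force $x_1 < x_2 < 0 < x_3$. Thus $k_g < 0$ throughout $[x_1,x_2]$, consistent with the required positivity of $\psi = -k_g/\sqrt{\mu}$ on $LC^*$, and the only singularities of the integrand are the integrable square-root endpoints at $x_1$ and $x_2$. The main conceptual work for this theorem was therefore already absorbed into Proposition \ref{prop-mug0}; once that identity is in hand, the present statement reduces to the routine change-of-variable computation sketched above, with the factor of $2$ accounting for the symmetric ascent-and-descent of $k_g$ within a single period.
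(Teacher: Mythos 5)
Your proposal is correct and follows essentially the same route as the paper: apply Proposition \ref{prop-mug0} together with $d\theta/ds=1/\psi$ to get $\Lambda^{\Theta}=-\sqrt{\mu}\int_0^T ds/k_g$, then use the quadrature $((k_g)_s)^2=k_g^3-\lambda k_g-\mu$ to convert to the curvature variable, with the factor $2$ from the two monotone halves of the period. Your explicit check via Vieta's formulas that $x_1<x_2<0<x_3$ (so the integrand has no interior singularity and $\psi>0$) is a detail the paper leaves implicit, but it is not a different method.
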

 \begin{proof}
 It is straightforward to observe that in one period, the arc length $T$ is given by
  \begin{align*}
    T=2\int^{x_2}_{x_1}\frac{1}{\sqrt{x^3-\lambda x-\mu}}dx=\frac{4}{\sqrt{x_3-x_1}}K\left(\sqrt{\frac{x_2-x_1}{x_3-x_1}}\right),
  \end{align*}
  where $K$ denotes the complete elliptic integral of the first kind.
  By Proposition \ref{prop-mug0}, $\displaystyle \frac{d\theta}{ds}=\frac{1}{\psi}$ in \eqref{rem-k_g}, and \eqref{solitoneq}, we have
  \begin{align}\label{or-A}
  \Lambda^{\Theta}=-\sqrt{\mu}\int^T_0\frac{1}{k_g}ds=-2\sqrt{\mu}\int^{x_2}_{x_1}\frac{1}{x\sqrt{x^3-\lambda x-\mu}}dx,
  \end{align}
  which achieves the anticipated  outcome.
 \end{proof}
 \begin{rem} If $\mu>0$, by selecting the initial conditions $\displaystyle \psi(0)=\frac{-x_1}{\sqrt{\mu}}$ and $\psi_s(0)=0$, we obtain the solution to equation \eqref{secod-ode} as follows $$\psi=\frac{x_1+(x_2-x_1)\mathrm{sn}^2\left(\frac{\sqrt{x_3-x_1}}{2}s,\;\sqrt{\frac{x_2-x_1}{x_3-x_1}}\right)}{\sqrt{\mu}}.$$
 \end{rem}
 \begin{rem}
 If $\mu\leq0$, then Proposition \ref{prop-mug0}  implies that the curve cannot be rotated around the time-like axis. According to the rotation matrix \eqref{RM}, the curves rotation round the space-like or light-like axis
 can not result in the formation of closed curves.  It is only when $\mu>0$ that the solitons are capable of forming closed curves.
 \end{rem}
 Based on \cite{gr}, we obtain an elegant expression for \eqref{ang}, that is,
   \begin{align*}
       \Lambda^{\Theta}=\frac{4\sqrt{\mu}}{-x_1\sqrt{x_3-x_1}}\Pi\left(\frac{\pi}{2},\frac{x_2-x_1}{-x_1},\sqrt{\frac{x_2-x_1}{x_3-x_1}}\right).
    \end{align*}
\begin{prop}\label{prop-lim} Assume $\mu>0$. Then,
\begin{itemize}
  \item[(1)] as $\displaystyle\lambda\to3\left(\frac{\mu}{2}\right)^{2/3}$, $\displaystyle \Lambda^{\Theta}\to 2\sqrt{2/3}\pi$;
  \item[(2)] as $\displaystyle\lambda\to+\infty$, $\displaystyle \Lambda^{\Theta}\to 2\pi$.
\end{itemize}
\end{prop}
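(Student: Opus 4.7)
The plan is to exploit the closed-form expression
$$\Lambda^\Theta=\frac{4\sqrt{\mu}}{-x_1\sqrt{x_3-x_1}}\,\Pi\!\left(\frac{\pi}{2},\;\frac{x_2-x_1}{-x_1},\;\sqrt{\frac{x_2-x_1}{x_3-x_1}}\right)$$
obtained just before the proposition, and to analyze the three real roots $x_1<x_2<x_3$ of $x^3-\lambda x-\mu=0$ in each limiting regime by means of Vieta's relations $x_1+x_2+x_3=0$ and $x_1x_2x_3=\mu$.

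For part (1), I would set $a=(\mu/2)^{1/3}$ so that the boundary value is $\lambda=3a^2$; then the cubic factors as $(x+a)^2(x-2a)$ and hence $x_1,x_2\to -a$ and $x_3\to 2a$. Both the characteristic $(x_2-x_1)/(-x_1)$ and the modulus $\sqrt{(x_2-x_1)/(x_3-x_1)}$ tend to $0$, so the integrand in the defining formula for $\Pi$ converges uniformly to $1$ and gives $\Pi\to \pi/2$. Substituting $-x_1\to a$, $x_3-x_1\to 3a$, and $\sqrt{\mu}=\sqrt{2}\,a^{3/2}$ into the prefactor produces $\Lambda^\Theta\to \frac{4\sqrt{2}\,a^{3/2}}{a\sqrt{3a}}\cdot\frac{\pi}{2}=2\pi\sqrt{2/3}$, as claimed.

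Part (2) is the main obstacle. From $\lambda\to+\infty$ together with $x_1x_2x_3=\mu$ and $x_1+x_2+x_3=0$ one derives $x_1\sim -\sqrt{\lambda}$, $x_3\sim \sqrt{\lambda}$ and $x_2\sim -\mu/\lambda\to 0^-$; hence the modulus satisfies $k^2\to 1/2$ and stays away from $1$, while the characteristic $\alpha^2=(x_2-x_1)/(-x_1)\to 1$, which drives $\Pi$ to infinity. The classical asymptotic, obtained by localizing the defining integral near $\theta=\pi/2$ and evaluating the resulting $\arctan$, reads
$$\Pi(\pi/2,\alpha^2,k)=\frac{\pi}{2\alpha\sqrt{(1-\alpha^2)(1-k^2)}}+O(1)\qquad (\alpha\to 1^-,\ k\text{ fixed}<1).$$

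The crucial algebraic identity that makes the limit clean uses $1-\alpha^2=x_2/x_1$ and $1-k^2=(x_3-x_2)/(x_3-x_1)$ together with $x_1x_2x_3=\mu$:
$$(-x_1)^2(x_3-x_1)(1-\alpha^2)(1-k^2)=x_1x_2(x_3-x_2)=\frac{\mu(x_3-x_2)}{x_3}.$$
The leading contribution therefore is
$$\Lambda^\Theta\sim\frac{4\sqrt{\mu}}{-x_1\sqrt{x_3-x_1}}\cdot\frac{\pi}{2\alpha\sqrt{(1-\alpha^2)(1-k^2)}}=\frac{2\pi}{\alpha}\sqrt{\frac{x_3}{x_3-x_2}}\longrightarrow 2\pi,$$
since $\alpha\to 1$ and $x_2/x_3\to 0$. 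The remaining verification is that the $O(1)$ error in the asymptotic for $\Pi$ is multiplied by a prefactor of order $\lambda^{-3/4}\to 0$ and therefore vanishes in the limit, which amounts to a routine dominated-convergence argument applied to the integrand of $\Pi$ on any interval $[0,\pi/2-\delta]$.
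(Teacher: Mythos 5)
Your proposal is correct and follows essentially the same route as the paper: both limits are read off from the closed-form expression $\Lambda^{\Theta}=\frac{4\sqrt{\mu}}{-x_1\sqrt{x_3-x_1}}\,\Pi\bigl(\frac{\pi}{2},\alpha^2,k\bigr)$, with part (1) handled by the degenerate limit $\Pi\to\pi/2$ and part (2) by isolating the $(1-\alpha^2)^{-1/2}$ divergence of $\Pi$ as $\alpha^2\to1$, $k^2\to1/2$, and observing that the bounded remainder is killed by the prefactor of order $\lambda^{-3/4}$. The only difference is cosmetic: you derive the asymptotics of $\Pi$ near $\alpha^2=1$ yourself (localization plus the Vieta identity $(-x_1)^2(x_3-x_1)(1-\alpha^2)(1-k^2)=\mu(x_3-x_2)/x_3$), whereas the paper quotes the corresponding series expansion from Byrd--Friedman and computes the same limit of the singular term times the prefactor.
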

\begin{proof} For $\displaystyle\lambda\to3\left(\frac{\mu}{2}\right)^{2/3}$, we have
\begin{equation*}
  x_1\to-\frac{\sqrt{3\lambda}}{3},\quad x_2\to-\frac{\sqrt{3\lambda}}{3}, \quad x_3\to\frac{2\sqrt{3\lambda}}{3}.
\end{equation*}
Through $\displaystyle \Pi\left(\frac{\pi}{2},0,0\right)=\frac{\pi}{2}$, we get $\displaystyle \Lambda^{\Theta}\to 2\sqrt{2/3}\pi$.

For $\displaystyle\lambda\to+\infty$, we have
\begin{equation*}
\theta\to\frac{\pi}{2},\qquad x_1\to-\infty,\qquad x_3\to+\infty.
\end{equation*}
Since $x_1x_2x_3=\mu$ and $x_1+x_2+x_3=0$, it is easy to find
\begin{equation*}
x_2\to 0,\qquad  \frac{x_1}{x_3}=-1,\qquad x_1^2x_2\to-\mu, \qquad \alpha^2\to1, \qquad k^2\to\frac{1}{2},
\end{equation*}
where $\displaystyle \alpha^2=\frac{x_2-x_1}{-x_1},\quad k^2=\frac{x_2-x_1}{x_3-x_1}$.
According to \cite{bf}, we obtain the expansion
 \begin{align*}
   \int^{\frac{\pi}{2}}_0\frac{d\theta}{(1-\alpha^2\sin^2\theta)\sqrt{1-k^2\sin^2\theta}}\;=&\;\int^{\frac{\pi}{2}}_0\frac{d\theta}{\sqrt{1-k^2\sin^2\theta}}\\
   &\quad\qquad+\frac{1}{1-k^2}\int^{\frac{\pi}{2}}_0\sqrt{1-k^2\sin^2\theta}d\theta\\
   &\quad\qquad+\frac{(2-k^2(1+\alpha^2))\pi}{4(1-k^2)\sqrt{1-k^2}\sqrt{1-\alpha^2}}\\
   &\quad\qquad +O(1-\alpha^2),
 \end{align*}
 where $\alpha^2\neq1$ and $O(1-\alpha^2)$ signifies that the remaining terms are bounded by $(1-\alpha^2)C$ for some constant $C$.
 One can verify that
 \begin{align*}
 \lim_{k^2\to\frac{1}{2}}\int^{\frac{\pi}{2}}_0\frac{d\theta}{\sqrt{1-k^2\sin^2\theta}}=0, \quad
 \lim_{k^2\to\frac{1}{2}}\int^{\frac{\pi}{2}}_0\sqrt{1-k^2\sin^2\theta}d\theta=0.
 \end{align*}
  Direct computation shows
 \begin{equation*}
   \lim_{\lambda\to+\infty}\frac{(2-k^2(1+\alpha^2))\pi}{4(1-k^2)\sqrt{1-k^2}\sqrt{1-\alpha^2}}\times\frac{4\sqrt{\mu}}{-x_1\sqrt{x_3-x_1}}=2\pi.
 \end{equation*}
 Hence, we complete the proof.
\end{proof}
Now let us consider the monotonicity  of $\Lambda^{\Theta}$.
\begin{rem}
If $\mu>0$, we can make $\mu=2$ in \eqref{solitoneq}. In fact, one can check that this amounts to replacing
	 $k_g,\; s$ by $\displaystyle \left(\frac{\mu}{2}\right)^{1/3}\bar{k}_g,\; \left(\frac{\mu}{2}\right)^{-2/3}p$, respectively.
In the following, we only consider the case $\mu=2$.
\end{rem}
By series expansions in \cite{bf}, if $k^2<1$ and $ k^2< \alpha^2$, it follows
\begin{equation}
	\Pi(v,k)=\Pi\left(\frac{\pi}{2},\alpha^2,k\right)=\sum_{m=0}^{\infty} c_{m}k^{2m},\label{vk}
	\end{equation}
where
\begin{align*}
  c_{0}&\;=\;\frac{\pi}{2\sqrt{1-\alpha^2}},\\
  c_{1}&\;=\;\frac{\pi}{4\alpha^2}\left(\frac{1}{\sqrt{1-\alpha^2}}-1\right),\\
  c_{2}&\;=\;\frac{3\pi}{32\alpha^4}\left(\frac{2}{\sqrt{1-\alpha^2}}-2-\alpha^2\right),\\
  c_{3}&\;=\;\frac{5\pi}{256\alpha^6}\left(\frac{8}{\sqrt{1-\alpha^2}}-8-4\alpha^2-3\alpha^4\right),
\end{align*}
\begin{align*}
	2(m+1)\alpha^2 c_{m+1} = \frac{\pi}{2(2m-1)} \genfrac{(}{)}{0pt}{}{-\frac{1}{2}}{m}^{2} + (1-2m) c_{m-1} + (2m+1+2mv)c_{m},
\end{align*}
and $\displaystyle \genfrac{(}{)}{0pt}{}{-\frac{1}{2}}{m}$ is the binomial coefficient.
Let $\displaystyle x=\left(3/\lambda\right)^{3/2}$, and then
\begin{align*}
  \frac{4\sqrt{\mu}}{-x_1\sqrt{x_3-x_1}}c_{0}&\;=\;\sqrt{2}\pi+\frac{\sqrt{6}}{18}\pi x+O(x^2),\\
  \frac{4\sqrt{\mu}}{-x_1\sqrt{x_3-x_1}}c_{1}k^2&\;=\;\frac{\sqrt{2}}{4}\pi-\frac{\sqrt[4]{3}}{6}\pi x^{1/2}-\frac{\sqrt{6}}{72}\pi x+O(x^2),\\
  \frac{4\sqrt{\mu}}{-x_1\sqrt{x_3-x_1}}c_{2}k^4&\;=\;\frac{3\sqrt{2}}{32}\pi-\frac{3\sqrt[4]{3}}{32}\pi x^{1/2}-\frac{\sqrt{6}}{64}\pi x+\frac{5\sqrt[4]{27}}{288}\pi x^{3/2}+O(x^2),\\
  \frac{4\sqrt{\mu}}{-x_1\sqrt{x_3-x_1}}c_{3}k^6&\;=\;\frac{5\sqrt{2}}{128}\pi-\frac{25\sqrt[4]{3}}{512}\pi x^{1/2}-\frac{25\sqrt{6}}{2304}\pi x\\
                    &\qquad+\frac{125\sqrt[4]{27}}{6912}\pi x^{3/2}+O(x^2),\\
  \frac{4\sqrt{\mu}}{-x_1\sqrt{x_3-x_1}}c_{4}k^8&\;=\;\frac{35\sqrt{2}}{2048}\pi-\frac{1225\sqrt[4]{3}}{49152}\pi x^{1/2}-\frac{245\sqrt{6}}{36864}\pi x\\
                    &\qquad+\frac{6125\sqrt[4]{27}}{442368}\pi x^{3/2}+O(x^2),\\
  \frac{4\sqrt{\mu}}{-x_1\sqrt{x_3-x_1}}c_{5}k^{10}&\;=\;\frac{63\sqrt{2}}{8192}\pi-\frac{6615\sqrt[4]{3}}{524288}\pi x^{1/2}-\frac{63\sqrt{6}}{16384}\pi x\\
                    &\qquad+\frac{1225\sqrt[4]{27}}{131072}\pi x^{3/2}+O(x^2),\\
  \frac{4\sqrt{\mu}}{-x_1\sqrt{x_3-x_1}}c_{6}k^{12}&\;=\;\frac{231\sqrt{2}}{65536}\pi-\frac{53361\sqrt[4]{3}}{8388608}\pi x^{1/2}-\frac{847\sqrt{6}}{393216}\pi x\\
                    &\qquad+\frac{148225\sqrt[4]{27}}{25165824}\pi x^{3/2}+O(x^2),\\
  \frac{4\sqrt{\mu}}{-x_1\sqrt{x_3-x_1}}c_{7}k^{14}&\;=\;\frac{32947\sqrt{2}}{201326592}\pi-\frac{429429\sqrt[4]{3}}{134217728}\pi x^{1/2}-\frac{1859\sqrt{6}}{1572864}\pi x\\
                   &\qquad+\frac{715715\sqrt[4]{27}}{201326592}\pi x^{3/2}+O(x^2).
\end{align*}
Finally, $\Lambda^{\Theta}$ can be expressed as
\begin{eqnarray}\label{app-r}
\begin{aligned}
  \Lambda^{\Theta}&\;=\;\frac{370345\sqrt{2}}{262144}\pi-\frac{47827845\sqrt[4]{3}}{134217728}\pi x^{1/2}-\frac{715\sqrt{6}}{524288}\pi x\\
                    &\qquad+\frac{123361315\sqrt[4]{27}}{1811939328}\pi x^{3/2}+O(x^2).
\end{aligned}
\end{eqnarray}
By the first four parts of this equation, we can see that $\Lambda^{\Theta}(\lambda)$ is monotonically increasing when $\displaystyle\lambda>3\left(\frac{\mu}{2}\right)^{2/3}$. By Proposition \ref{prop-lim}, we have $\Lambda^{\Theta}(\lambda)$  converges to $2\pi$ as $\lambda\to+\infty$, and converges to $\displaystyle2\sqrt{2/3}\pi$ as $\lambda\to3\left(\frac{\mu}{2}\right)^{2/3}$, which clearly shows $\displaystyle 2\sqrt{2/3}\pi<\Lambda^{\Theta}(\lambda)<2\pi$.
\begin{rem}
In Fig. \ref{Fig:compare},  the curves depicting $\Lambda^{\Theta}(\lambda)$ are precisely rendered using two distinct methods: the numerical integral approach defined in \eqref{ang}, and the approximate series expansions \eqref{app-r} with respect to $\lambda$. Notably, these two representations exhibit remarkable congruence, with minimal deviations among them, essentially indicating a near-perfect overlap.
 \end{rem}
\begin{figure}[htpb]
\centering
\setlength{\abovecaptionskip}{0.cm}
\setlength{\abovecaptionskip}{0.cm}
	\includegraphics[width=0.7\textwidth]{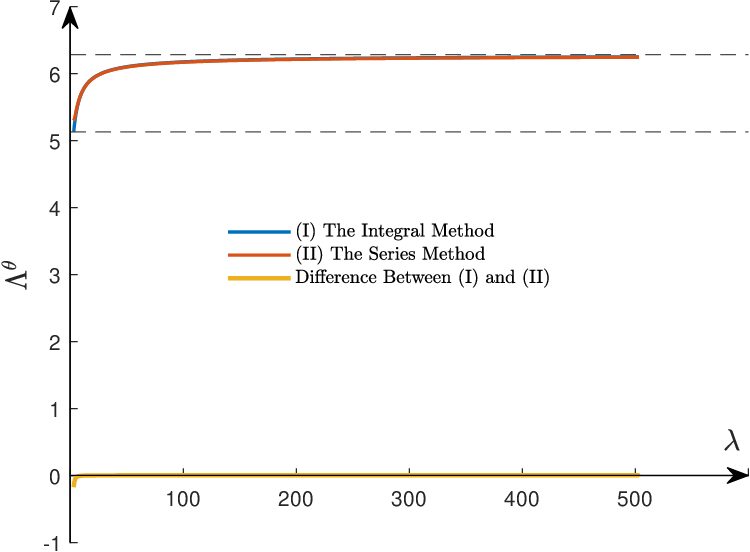}
	\caption{The comparison of different method to calculate $\Lambda^{\Theta}$.}
\label{Fig:compare}
\end{figure}

\subsection{Solitons rotating around a light-like axis}
If the rotation axis is light-like, it is always possible, through an appropriate rotation, to assume it aligns with the vector $(1,1,0)$. Consequently, the corresponding rotation matrix can be formulated as
\begin{equation}\label{lightmatrix}
\left(
  \begin{array}{ccc}
    1+\frac{\omega^2}{2} & -\frac{\omega^2}{2} & \omega \\
    \frac{\omega^2}{2} & 1-\frac{\omega^2}{2} & \omega\\
    \omega & -\omega & 1 \\
  \end{array}
\right).
\end{equation}
\begin{prop}\label{prop-mu0}
When solitons rotate around the light-like axis  $(1,1,0)$, it follows that $\mu=0$ and $\psi$  is given by
\begin{align*}
(1-\cos\theta)\psi=-bk_g,
\end{align*}
 where $b$ denotes an arbitrary positive constant.
\end{prop}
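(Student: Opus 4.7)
The plan is to follow the template set by the proof of Proposition \ref{prop-mug0} but with the light-like rotation $(1,1,0)$ replacing the rotation about the $x$-axis. First, by Lemma \ref{lem-kf-c} together with the soliton ODE \eqref{solitoneq}, the vector field $\mathbf{J}=(k_g)_s\mathbf{r}-k_g\mathbf{T}$ is Killing along the curve. If the soliton rotates about $(1,1,0)$, then $\mathbf{J}$ must be proportional along the curve to the generating Killing field identified in Lemma \ref{lem=kf}, namely $\mathbf{K}=\psi\sin\theta\,\partial_{\psi}-(1-\cos\theta)\,\partial_{\theta}$. I would then decompose $\mathbf{K}$ in the moving frame along the curve by using $\partial_{\psi}=\mathbf{r}/\psi$ (so that $\psi\sin\theta\,\partial_{\psi}=\sin\theta\,\mathbf{r}$) and $\partial_{\theta}=\psi\mathbf{T}-(\psi_{\theta}/\psi)\mathbf{r}$, which yields
\begin{equation*}
\mathbf{K}=\Bigl[\sin\theta+(1-\cos\theta)\tfrac{\psi_{\theta}}{\psi}\Bigr]\mathbf{r}-(1-\cos\theta)\psi\,\mathbf{T}.
\end{equation*}
Setting $b\mathbf{J}=-\mathbf{K}$ (the sign chosen so that the resulting constant $b$ comes out positive, given $k_g\le 0$ on the relevant periodic branch $k_g\in[x_1,x_2]$) and comparing the $\mathbf{T}$-coefficients gives $(1-\cos\theta)\psi=-bk_g$, which is the claimed formula.

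To prove that $\mu=0$ is forced, I would examine the integral curve $\bar{\mathbf r}$ of $\bar{\mathbf T}=\mathbf{J}/|\mathbf{J}|$ at a point where $(k_g)_s=0$, repeating the calculation of Proposition \ref{prop-mug0}. Using $\langle\mathbf{r},\mathbf{T}\rangle_L=0$ and $\langle\mathbf{T},\mathbf{T}\rangle_L=1$ one finds $|\mathbf{J}|=|k_g|$, so at the critical point $\bar{\mathbf T}$ coincides with $\pm\mathbf{T}$, whence $d\bar s/ds=\pm 1$, $\bar{\mathbf r}=\mathbf r$, and $\bar{\mathbf Y}=\mathbf Y$. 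Differentiating $\bar{\mathbf T}=-\tfrac{(k_g)_s}{k_g}\mathbf{r}+\mathbf{T}$ with respect to $s$, invoking \eqref{FS-eq-light} and then the soliton identities $(k_g)_{ss}=\tfrac32 k_g^2-\tfrac{\lambda}{2}$ and $k_g^3-\lambda k_g-\mu=0$, simplifies the $\mathbf{r}$-coefficient to $-\mu/(2k_g^2)$ and leaves the $\mathbf{Y}$-coefficient equal to $-1$. Matching with the light-cone Frenet formula $\bar{\mathbf T}_{\bar s}=\bar k_g\bar{\mathbf r}-\bar{\mathbf Y}$ gives $\bar k_g=-\mu/(2k_g^2)$ at the critical point.

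Independently, the orbit of the light-like rotation through any point lies in a hyperplane of the form $x-y=\text{const}$; this hyperplane has pseudo-normal $(1,-1,0)$, which is light-like, so by Proposition \ref{prop-plane} its intersection with $LC^*$ is a parabola of vanishing intrinsic curvature, that is, $\bar k_g=0$. Combining with the previous formula forces $\mu=0$. The main obstacle is the careful bookkeeping at the critical point: one must identify the frames $\{\bar{\mathbf r},\bar{\mathbf T},\bar{\mathbf Y}\}$ and $\{\mathbf r,\mathbf T,\mathbf Y\}$ there, verify $d\bar s/ds=\pm 1$, and correctly track the sign conventions so as to obtain $b>0$ in the resulting formula $(1-\cos\theta)\psi=-bk_g$. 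The remaining computations — the decomposition of $\mathbf{K}$ in the frame, the derivative $\bar{\mathbf T}_s$, and the use of the cubic ODE at $(k_g)_s=0$ — are straightforward and parallel to those already carried out in Proposition \ref{prop-mug0}.
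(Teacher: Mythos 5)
Your proposal is correct and follows essentially the same route as the paper: identify $\mathbf{J}=(k_g)_s\mathbf{r}-k_g\mathbf{T}$ as the Killing field, match it against the light-like rotational field of Lemma \ref{lem=kf} to read off $(1-\cos\theta)\psi=-bk_g$, and then compare $\bar{k}_g=-\mu/(2k_g^2)$ at a critical point of $k_g$ with the vanishing curvature of the rotation orbits (parabolas, via Proposition \ref{prop-plane}) to force $\mu=0$. One tiny slip: with the Lorentzian scalar product the pseudo-normal of the orbit plane $x-y=\mathrm{const}$ is $\pm(1,1,0)$ (the axis itself), not $(1,-1,0)$; since both vectors are light-like, Proposition \ref{prop-plane} still gives $\bar{k}_g=0$ and your conclusion is unaffected.
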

\begin{proof}
By Lemma \ref{lem-kf-c} and \eqref{solitoneq}, we discover
  \begin{align*}
    \mathbf{J}=(k_g)_s\mathbf{r}-k_g\mathbf{T}
  \end{align*}
  constitutes a Killing vector field along the curve $\mathbf{x}$.
  Now we can employ the coordinates $\mathbf{x}(\theta,\psi)=(\psi,\psi\cos\theta,\psi\sin\theta)$,
  so that according to Lemma \ref{lem=kf} its equator gives the only integral curve around light-like axis $(1,1,0)$ of $\mathbf{J}:\psi\sin\theta\partial_{\psi}-(1-\cos\theta)\partial_{\theta}=-b\mathbf{J}$.
  Taking the inner product of both sides of this equation with $\mathbf{T}$ generates
 \begin{align*}
    (1-\cos\theta)\psi=-bk_g.
  \end{align*}
  Let $\bar{\mathbf{r}}$ denote the  integral curve of vector field $\displaystyle \bar{\mathbf{T}}=\frac{\mathbf{J}}{\left|\mathbf{J}\right|}$.
  Similar calculations as in Proposition \ref{prop-mug0}, we have
  \begin{align*}
    \bar{k}_g\;=&-\frac{(k_g)_{ss}k_g-k_g^3}{k_g^2}=-\frac{k_g^3-\lambda k_g}{2k_g^2}=-\frac{\mu}{2k_g^2}.
  \end{align*}
  On the other hand, based on Proposition \ref{prop-plane}, the curvature of the plane curve that is perpendicular to the axis $(1,1,0)$ is given by $\displaystyle \bar{k}_g=0$.
  Consequently, we can derive  $\mu=0$.
\end{proof}
Then we have
\begin{align*}
  \sin\theta+(1-\cos\theta)\psi_s&\;=\;-b(k_g)_s,\\
  \frac{\cos\theta}{\psi}+\frac{\psi_s}{\psi}\sin\theta+(1-\cos\theta)\psi_{ss}&\;=\;-b(k_g)_{ss}.
\end{align*}
 \begin{rem}\label{rem-02pi}
 In the first period, the maximum value of $k_g$ is zero, which we assume is achieved at the point where $\theta=2\pi$.
 Subsequently, all instances where $k_g=0$ arise at $\theta=2n\pi$, where $n$ is any positive integer.
 Notably, at these points, $\psi$ takes on a constant value of $\displaystyle \frac{2}{b\lambda}$. Consequently, these points manifest as a same point on the light-cone.
\end{rem}
It is straightforward to observe that in one period, the arc length $T$ is given by
  \begin{align*}
    T=2\int^{0}_{-\sqrt{\lambda}}\frac{1}{\sqrt{x^3-\lambda x}}dx=\frac{4}{\sqrt{2\sqrt{\lambda}}}K\left(\sqrt{\frac{1}{2}}\right),
  \end{align*}
  where $K$ denotes the complete elliptic integral of the first kind.

Now, let us consider the initial conditions $\psi(0)=-x_1$ and $\psi_s(0)=0$. These conditions imply that $\theta=\pi$ at $s=0$ and $b=2$. Furthermore,
\begin{align*}
 \psi\sin^2\left(\frac{1}{2}\int_0^s\frac{dt}{\psi(t)}+\frac{\pi}{2}\right)=-k_g.
\end{align*}
Then we have
\begin{align*}
 \psi=-\left(1+\frac{1}{4}\left(\int_0^s\frac{dt}{k_g(t)}\right)^2\right)k_g(s).
\end{align*}
Based on \cite{gr}, according to \eqref{solitoneq}, we have when $\displaystyle s<\frac{T}{2}$,
\begin{align*}
 \psi\;=\;&-\left(1+\frac{1}{4}\left(\int_{x_1}^{k_g(s)}\frac{dx}{x\sqrt{x^3-\lambda x}}\right)^2\right)k_g(s)\\
     \;=\;&-\left(1+\frac{1}{2\lambda\sqrt{\lambda}}\left(\Pi\left(\arcsin\sqrt{\frac{k_g(s)+\sqrt{\lambda}}{\sqrt{\lambda}}},\;1,\;\sqrt{\frac{1}{2}}\right)\right)^2\right)k_g(s)\\
     \;=\;&-\left(1+\frac{1}{2\lambda\sqrt{\lambda}}\left(\Pi\left(\arcsin\left|\mathrm{sn}\left(\frac{\sqrt[4]{\lambda}}{\sqrt{2}}s,\;\sqrt{\frac{1}{2}}\right)\right|,\;1,\;\sqrt{\frac{1}{2}}\right)\right)^2\right)\times\\
          &\qquad\sqrt{\lambda}\left(-1+\mathrm{sn}^2\left(\frac{\sqrt[4]{\lambda}}{\sqrt{2}}s,\;\sqrt{\frac{1}{2}}\right)\right).
\end{align*}
\begin{rem} When $\mu=0$,  by specifying the initial conditions $\displaystyle \psi(0)=-x_1$ and $\psi_s(0)=0$, we find that the solution to equation \eqref{secod-ode} is given by
 \begin{equation*}
   \psi\sin^2\left(\frac{1}{2}\int_0^s\frac{dt}{\psi(t)}+\frac{\pi}{2}\right)=x_1\left(-1+\mathrm{sn}^2\left(\frac{\sqrt{-x_1}}{\sqrt{2}}s,\;\sqrt{\frac{1}{2}}\right)\right).
 \end{equation*}
 Alternatively, if $\displaystyle s<\frac{T}{2}$, this can be expressed as
 \begin{align*}
 \psi\;=\;&-\left(1-\frac{1}{2x_1^3}\left(\Pi\left(\phi,\;1,\;\sqrt{\frac{1}{2}}\right)\right)^2\right)\times\\
          &\qquad\qquad\qquad\qquad\qquad\qquad x_1\left(1-\mathrm{sn}^2\left(\frac{\sqrt{-x_1}}{\sqrt{2}}s,\;\sqrt{\frac{1}{2}}\right)\right),
 \end{align*}
 where $\displaystyle \phi=\mathrm{am}\left(\frac{\sqrt{-x_1}}{\sqrt{2}}s, \;\sqrt{\frac{1}{2}}\right)$, and $\mathrm{am}$ denotes  the jacobi amplitude function.
\end{rem}
\begin{prop}\label{prop-b-lignt}
If the solitons rotate around the light-like axis $(1,1,0)$, then $\psi$ is unbounded and its  infimum  is $0$.
\end{prop}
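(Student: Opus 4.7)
\noindent\textit{Proof strategy.} My plan is to combine the algebraic identity $(1-\cos\theta)\psi = -bk_g$ from Proposition~\ref{prop-mu0} with the relation $d\theta/ds = 1/\psi$, and then track how $\psi$ behaves as $s$ ranges over the entire parameter interval of the soliton. The core idea is that, once the numerator and denominator of $\psi = -bk_g/(1-\cos\theta)$ fail to have their zeros in resonance, $\psi$ must take arbitrarily large and arbitrarily small positive values.

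First I would record that $\theta$ is strictly monotone in $s$ (since $d\theta/ds = 1/\psi>0$), and that at each zero of $k_g$ the constraint forces $\cos\theta=1$, i.e.\ $\theta\in2\pi\mathbb Z$; by monotonicity $\theta$ passes through each such value exactly once. Then, using the analysis of the $\theta$-increment per period of $k_g$ sketched in the introduction and to be recorded in Corollary~\ref{cor-light}---namely that this increment lies strictly between $\pi$ and $2\pi$ and is not a rational multiple of $2\pi$ for generic $\lambda$---the zeros of the denominator $1-\cos\theta$ become \emph{desynchronized} from the zeros of the numerator $k_g$ as we advance through successive periods.

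Writing $\psi = -bk_g/(1-\cos\theta)$, I would then argue in two symmetric steps. For unboundedness: produce a sequence $s_n\to\infty$ along which $\theta(s_n)$ approaches a multiple of $2\pi$ while $s_n$ itself stays away from the zeros of $k_g$, so that $1-\cos\theta(s_n)\to 0$ while $|k_g(s_n)|$ is bounded below by a positive constant; this forces $\psi(s_n)\to\infty$. For $\inf\psi=0$: produce $s_n'\to\infty$ where $s_n'$ approaches a zero of $k_g$ while $\theta(s_n')$ stays bounded away from $2\pi\mathbb Z$, so that $k_g(s_n')\to 0$ while $1-\cos\theta(s_n')$ is bounded below by a positive constant; this gives $\psi(s_n')\to 0^+$.

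The main obstacle is justifying the existence of both sequences rigorously. This reduces to a density/equidistribution statement for the iterates of the fractional angular displacement $\Delta\theta/(2\pi)\in(1/2,1)$ modulo $1$, exploiting the non-resonance of the $\theta$-increment per $k_g$-period with $2\pi$; handling the exceptional rational values of $\Delta\theta/(2\pi)$ would require a separate, slightly more delicate argument using the local behavior of $\psi$ near the misaligned near-zeros of $1-\cos\theta$ and $k_g$.
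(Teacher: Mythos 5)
There is a genuine gap: the dynamical picture your argument rests on is not the one governing this soliton. You correctly note that the identity $(1-\cos\theta)\psi=-bk_g$ together with $\psi>0$ forces $\cos\theta=1$ at every zero of $k_g$ --- but this synchronization is permanent (it is exactly Remark \ref{rem-02pi}: all zeros of $k_g$ sit at $\theta\in 2\pi\Z$, where $\psi=2/(b\lambda)$), so the zeros of numerator and denominator can never ``become desynchronized,'' and your appeal to an equidistribution/irrational-rotation argument has nothing to act on. The underlying symmetry here is the parabolic (null) rotation \eqref{lightmatrix}, not an elliptic rotation by a fixed angle: there is no constant angular displacement per period of $k_g$ to iterate modulo $2\pi$. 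Indeed Corollary \ref{cor-light} says the opposite of what you attribute to it --- it asserts that $\theta(nT)-2n\pi$ decreases monotonically to $0$, so the per-period increments converge to $2\pi$ rather than equidistributing, and no genericity assumption on $\lambda$ enters. Your unboundedness mechanism (denominator $\to 0$ while $|k_g|$ stays bounded below) is in fact the right one, realized at $s=nT$ where $k_g=x_1=-\sqrt{\lambda}$ and $\theta(nT)\to 2n\pi$; but to know $\theta(nT)\to 2n\pi$ you need precisely the quantitative statement of Corollary \ref{cor-light}, which the paper obtains from the explicit formulas $\sin\theta(nT)=2n\omega/(1+(n\omega)^2)$, $\cos\theta(nT)=((n\omega)^2-1)/(1+(n\omega)^2)$ --- i.e.\ from the very computation your proposal tries to bypass. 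As written, your argument has no independent way to produce that sequence. The infimum half is in worse shape: because small values of $|k_g|$ occur only near $\theta\in2\pi\Z$, a sequence with $k_g(s_n')\to0$ and $1-\cos\theta(s_n')$ bounded below is not something you can simply ``produce''; establishing that such points exist is essentially equivalent to the statement $\inf\psi=0$ itself, so the step is circular.

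The paper's proof is a short explicit computation exploiting the orbit structure: the arc over the $n$-th period is the image of the first-period arc under the null rotation with parameter $n\omega$, so on the $n$-th cycle $\psi$ equals the first-period value multiplied by the factor
\begin{equation*}
f_n(\theta)=1+\frac{(n\omega)^2}{2}-\frac{(n\omega)^2}{2}\cos\theta+n\omega\sin\theta .
\end{equation*}
Evaluating at $\theta=\pi$ gives $\psi_n=(1+(n\omega)^2)\psi_0\to\infty$ (unboundedness), and minimizing $f_n$ over $\theta$ gives $\min_\theta f_n=1-2\big/\big(1+\sqrt{1+4/(n\omega)^2}\big)\to0$ (infimum $0$). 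If you want to salvage your route via $(1-\cos\theta)\psi=-bk_g$, you would still need this rotation-matrix computation (or an equivalent asymptotic for $\theta(nT)$ and for the location of the per-period minimum of $\psi$) as input; at that point the paper's direct argument is both shorter and complete.
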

\begin{proof}
 By applying the rotation matrix given in \eqref{lightmatrix}, after $n$ cycles, points $(\psi,\psi\cos\theta,\psi\sin\theta)$  in the first cycle are rotated to the points
 \begin{align*}
 \Bigg(1+\frac{(n\omega)^2}{2}&-\frac{(n\omega)^2}{2}\cos\theta+n\omega\sin\theta, \\
 &\frac{(n\omega)^2}{2}+\left(1-\frac{(n\omega)^2}{2}\right)\cos\theta+n\omega\sin\theta,
  n\omega-n\omega\cos\theta+\sin\theta \Bigg)\psi.
 \end{align*}
For $\theta_0=\pi$, $\psi_n=\left(1+(n\omega)^2\right)\psi_0$, where $\psi_0$ is the value for $\psi$ at $\theta_0$ in the first period. $\psi_n$ tends to $+\infty$ as $n\to+\infty$, which demonstrates that $\psi$ is unbounded.

Next, we consider the function
\begin{equation*}
  f(\theta)=1+\frac{(n\omega)^2}{2}-\frac{(n\omega)^2}{2}\cos\theta+n\omega\sin\theta.
\end{equation*}
Its derivative is given by
\begin{equation*}
  f'(\theta)=\frac{(n\omega)^2}{2}\sin\theta+n\omega\cos\theta.
\end{equation*}
Setting $f'(\bar{\theta}_n)=0$ leads to
\begin{align*}
\sin{\bar{\theta}_n}=-\frac{2}{\sqrt{4+(n\omega)^2}},\quad \cos{\bar{\theta}_n}=\frac{n\omega}{\sqrt{4+(n\omega)^2}}.
\end{align*}
Furthermore, the second derivative of $f$ at $\bar{\theta}_n$ is
\begin{align*}
  f''(\bar{\theta}_n)=\frac{(n\omega)^2}{2}\cos{\bar{\theta}_n}-n\omega\sin{\bar{\theta}_n}=\frac{n\omega\sqrt{4+(n\omega)^2}}{2}>0,
\end{align*}
indicating that $\bar{\theta}_n$ is a minimum point of $f$.
At this minimum point, the value of $f$ is:
\begin{align*}
  f(\bar{\theta}_n)=1-\frac{2}{1+\sqrt{1+4/(n\omega)^2}}.
\end{align*}
As $n\to +\infty$, $f(\bar{\theta}_n)$ tends to $0$.
Therefore, the infimum of $\psi$ (considering its dependence on $f(\theta)$ through the rotation) is $0$.
\end{proof}
If $\theta(0)=\pi$, according to the preceding proof, we derive the following expressions
\begin{align*}
  \sin\theta(nT)=\frac{2n\omega}{1+(n\omega)^2},\quad \cos\theta(nT)=\frac{(n\omega)^2-1}{1+(n\omega)^2}.
\end{align*}
From these expressions, we deduce the following corollary.
\begin{cor}\label{cor-light}
When solitons rotate around the light-like axis $(1,1,0)$ with the initial condition $\theta(0)=\pi$, it holds that  $2n\pi<\theta(nT)<(2n+1)\pi$.
The angle $\theta(nT)-2n\pi$ decreases monotonically as $n$ increases, and as the integer $n$ approaches positive infinity, $\theta(nT)-2n\pi$ converges to zero.
\end{cor}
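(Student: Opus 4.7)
The plan is to exploit the explicit closed forms
\begin{equation*}
\sin\theta(nT)=\frac{2n\omega}{1+(n\omega)^2},\qquad \cos\theta(nT)=\frac{(n\omega)^2-1}{1+(n\omega)^2}
\end{equation*}
derived just before the corollary, and to invert them via a Weierstrass-type substitution. Setting $\phi_n=2\arctan(n\omega)$ and applying the half-angle identities gives $\sin\phi_n=\tfrac{2n\omega}{1+(n\omega)^2}$ and $\cos\phi_n=\tfrac{1-(n\omega)^2}{1+(n\omega)^2}$. Consequently $\sin\theta(nT)=\sin(\pi-\phi_n)$ and $\cos\theta(nT)=\cos(\pi-\phi_n)$, so
\begin{equation*}
\theta(nT)\,\equiv\,\pi-2\arctan(n\omega)\pmod{2\pi}.
\end{equation*}

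To pin down the correct branch I invoke Remark \ref{rem-02pi} together with monotonicity. The identity $(1-\cos\theta)\psi=-2k_g$ with $\psi>0$ shows that $k_g(s)=0$ if and only if $\theta(s)\in 2\pi\Z$. Since $\mu=0$ by Proposition \ref{prop-mu0}, the curvature $k_g$ is a $T$-periodic function that vanishes exactly once per period at $s=(2k+1)T/2$; combined with the strict increase of $\theta$ in $s$ (because $d\theta/ds=1/\psi>0$) and the initial value $\theta(0)=\pi$, continuity forces $\theta((2k-1)T/2)=2k\pi$ for every $k\geq 1$. Hence for $n\geq 1$ the point $s=nT$ lies strictly between two consecutive zeros of $k_g$, yielding $\theta(nT)\in(2n\pi,\,2(n+1)\pi)$. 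The positivity $\sin\theta(nT)>0$ then narrows this to $(2n\pi,\,(2n+1)\pi)$, which is the first assertion of the corollary.

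Matching the congruence above with this interval pins down the exact value $\theta(nT)-2n\pi=\pi-2\arctan(n\omega)\in(0,\pi)$ for every $n\geq 1$. Monotone decrease in $n$ is then immediate since $\arctan$ is strictly increasing and $\omega>0$, while $\theta(nT)-2n\pi\to 0$ follows from $\arctan(n\omega)\to \pi/2$ as $n\to\infty$.

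The main obstacle is the branch identification in the second step: inverting $\sin$ and $\cos$ alone determines $\theta(nT)$ only modulo $2\pi$, and any symmetry argument one might try to apply to $k_g$ fails because $\psi$ is not symmetric about the half-period. Remark \ref{rem-02pi}, which anchors the zeros of $k_g$ exactly at $\theta\in 2\pi\Z$, is precisely what resolves the ambiguity.
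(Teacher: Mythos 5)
Your proposal is correct and follows essentially the same route as the paper: the paper likewise obtains $\sin\theta(nT)=\tfrac{2n\omega}{1+(n\omega)^2}$, $\cos\theta(nT)=\tfrac{(n\omega)^2-1}{1+(n\omega)^2}$ from the $n$-fold application of the light-like rotation matrix to the point at $\theta_0=\pi$ and then deduces the corollary directly from these expressions. The one thing you add is the explicit branch identification $\theta(nT)=2n\pi+\pi-2\arctan(n\omega)$, pinned down via Remark \ref{rem-02pi} (zeros of $k_g$ occur exactly at $\theta\in2\pi\Z$, once per period at the half-period points) together with $d\theta/ds=1/\psi>0$; the paper leaves this step implicit, so your argument fills in a detail the paper asserts rather than proves, and is sound provided one notes $\omega>0$, which is forced by the strict increase of $\theta$ over one period.
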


\begin{rem} Given the initial conditions  $\psi(0)=-x_1$ and $\psi_s(0)=0$, our numerical calculations (depicted in the left-hand side of  Fig. \ref{Fig:0lambda}) reveal that
$\theta(T,\lambda)$ exhibits monotonic increasing behavior with respect to $\lambda$. Furthermore, we observe that  $\displaystyle \lim_{\lambda\to0}\theta(T,\lambda)=2\pi$, whereas $\displaystyle \lim_{\lambda\to+\infty}\theta(T,\lambda)=3\pi$.
\end{rem}

\begin{figure}[htpb]
\centering
\setlength{\abovecaptionskip}{0.cm}
\setlength{\abovecaptionskip}{0.cm}
	\includegraphics[width=0.45\textwidth]{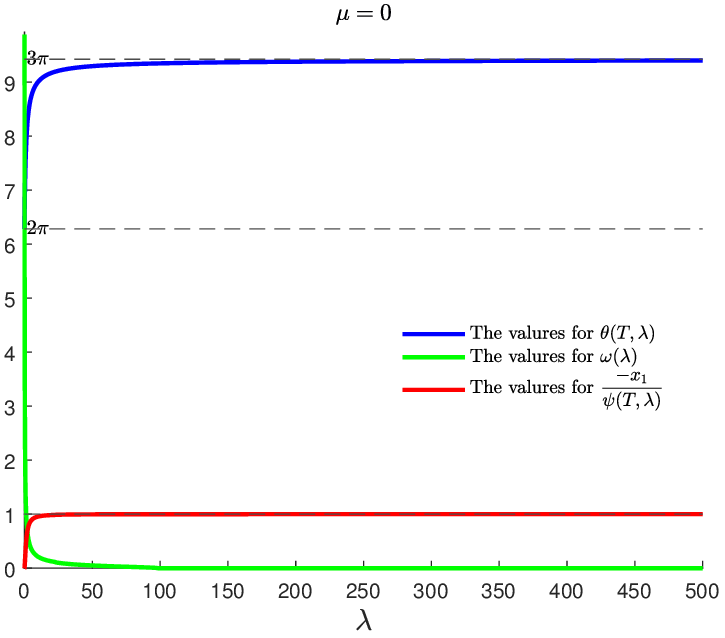}\qquad
    \includegraphics[width=0.45\textwidth]{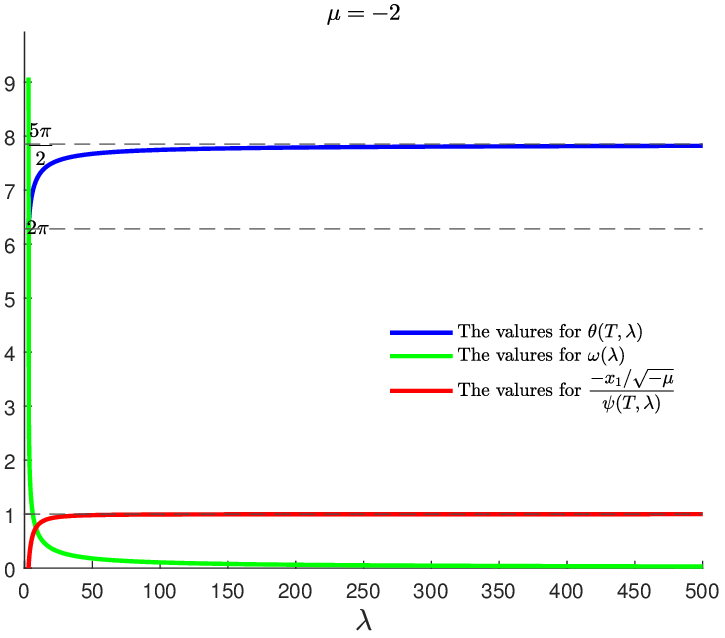}
	\caption{Monotonicity \& asymptotics at $s=T$ with varying $\lambda$.}
\label{Fig:0lambda}
\end{figure}

\subsection{Solitons rotating around a space-like axis}
If the rotation axis is space-like, we can always assume, through a suitable rotation, that this axis aligns with the $z$-axis. Consequently, the rotation matrix can be expressed as
\begin{equation}\label{spacematrix}
\left(
  \begin{array}{ccc}
    \cosh\omega & \sinh\omega & 0 \\
    \sinh\omega & \cosh\omega & 0\\
    0 & 0 & 1 \\
  \end{array}
\right).
\end{equation}
By employing methodologies akin to those utilized in Proposition \ref{prop-mug0} and Proposition \ref{prop-mu0}, we can formulate the subsequent proposition.
\begin{prop}
If the solitons rotate around the $z$-axis, it necessitates that $\mu<0$ and $\psi$  is expressed as
\begin{align*}
\psi\sin\theta=\frac{-k_g}{\sqrt{-\mu}}.
\end{align*}
\end{prop}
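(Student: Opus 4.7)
The proof mirrors the templates of Propositions \ref{prop-mug0} and \ref{prop-mu0}, adapted to the space-like axis. First, Lemma \ref{lem-kf-c} combined with \eqref{solitoneq} tells us that $\mathbf{J} = (k_g)_s\mathbf{r} - k_g\mathbf{T}$ is a Killing vector field along the soliton. Since the curve rotates about the $z$-axis, Lemma \ref{lem=kf} identifies the generating Killing field on $LC^*$ as $\psi\cos\theta\,\partial_\psi - \sin\theta\,\partial_\theta$, so along the integral curve I set this equal to $b\mathbf{J}$ for some constant $b$.

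Next, I take the Lorentzian inner product of both sides with $\mathbf{T}$. Using the Cartesian form $y\partial_x + x\partial_y$ provided by Lemma \ref{lem=kf} together with the expression for $\mathbf{T}$ displayed just before \eqref{rem-k_g}, a direct computation gives $\langle \psi\cos\theta\,\partial_\psi - \sin\theta\,\partial_\theta,\,\mathbf{T}\rangle_L = -\psi\sin\theta$, while from $\langle\mathbf{r},\mathbf{T}\rangle_L = 0$ and $\langle\mathbf{T},\mathbf{T}\rangle_L = 1$ one obtains $\langle b\mathbf{J},\mathbf{T}\rangle_L = -bk_g$. Equating these gives $\psi\sin\theta = bk_g$.

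To pin down $b$, I introduce the unit field $\bar{\mathbf{T}} = \mathbf{J}/|\mathbf{J}|$ and focus on a point where $(k_g)_s = 0$. At such a point the integral curve $\bar{\mathbf{r}}$ of $\bar{\mathbf{T}}$ passing through it is exactly the orbit of the $z$-axis boost through that point. By Proposition \ref{prop-plane}, this orbit is the hyperbola obtained by intersecting $LC^*$ with a plane $\{z = c\}$, whose curvature on the light-cone is the positive constant $\bar{k}_g = 1/(2c^2) = 1/(2(\psi\sin\theta)^2)$. Repeating the differentiation performed in Proposition \ref{prop-mug0}, which uses the Frenet equations \eqref{FS-eq-light} together with the identities $(k_g)_{ss} = \tfrac{3}{2}k_g^2 - \tfrac{\lambda}{2}$ and $k_g^3 - \lambda k_g = \mu$ at $(k_g)_s = 0$, I obtain $\bar{k}_g = -\mu/(2k_g^2)$.

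Equating the two expressions gives $-\mu/(2k_g^2) = 1/(2b^2 k_g^2)$, hence $\mu < 0$ and $1/b^2 = -\mu$. Choosing the sign of $b$ compatibly with the orientation fixed in the earlier propositions then yields $\psi\sin\theta = -k_g/\sqrt{-\mu}$. The main technical point requiring care is the curvature computation for $\bar{\mathbf{r}}$ at $(k_g)_s = 0$: one must verify that in the space-like situation the sign of $ds/d\bar{s}$, and therefore the sign appearing in $\bar{k}_g$, works out precisely so that $-\mu/(2k_g^2)$ matches the positive curvature of the hyperbolic orbit rather than its negative; otherwise the argument would force the opposite sign on $\mu$.
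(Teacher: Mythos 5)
Your proof is correct and takes essentially the same route the paper intends: it is precisely the adaptation of Propositions \ref{prop-mug0} and \ref{prop-mu0} that the text invokes without writing out, with the projection onto $\mathbf{T}$ yielding $\psi\sin\theta = bk_g$ and the comparison of $\bar{k}_g=-\mu/(2k_g^2)$ with the curvature $1/(2(\psi\sin\theta)^2)$ of the hyperbolic orbit from Proposition \ref{prop-plane} forcing $\mu<0$ and $1/b^2=-\mu$. The only residual freedom is the sign of $b$, which, as you observe, is an orientation convention fixed exactly as in the earlier cases.
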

It is straightforward to observe that in one period, the arc length $T$ is given by
  \begin{align*}
    T=2\int^{x_2}_{x_1}\frac{1}{\sqrt{x^3-\lambda x-\mu}}dx=\frac{4}{\sqrt{x_3-x_1}}K\left(\sqrt{\frac{x_2-x_1}{x_3-x_1}}\right),
  \end{align*}
  where $K$ denotes the complete elliptic integral of the first kind.

 In  one period,  $k_g$ demonstrates the presence of two zero points, occurring at $\theta=\pi$ and $\theta=2\pi$,  respectively,  due to the condition that $\psi>0$.
 According to \eqref{solitoneq}, we have
 \begin{align}\label{mumd}
   \cos\theta+\psi_s\sin\theta=\frac{-(k_g)_s}{\sqrt{-\mu}}.
 \end{align}
 \eqref{solitoneq} yields $(k_g)_s=\pm\sqrt{-\mu}$ at $\theta=\pi$ and $\theta=2\pi$.

 Then, we choose the initial condition  $\displaystyle\psi(0)=\frac{-x_1}{\sqrt{-\mu}}$, which yields  $\displaystyle\theta=\frac{\pi}{2}$ at $s=0$.
 From \eqref{mumd}, we deduce that  $\psi_s(0)=0$.
 Given these conditions, we obtain
 \begin{equation*}
   \psi\sin\left(\int_0^s\frac{dt}{\psi(t)}+\frac{\pi}{2}\right)=\frac{-k_g}{\sqrt{-\mu}}.
 \end{equation*}
 Since  $\theta=\pi$ and $\theta=2\pi$ arise in the initial period, the range for $\theta$ is $\pi/2\leq\theta<3\pi$, and at the endpoint, $\theta>2\pi$.
 Once again, utilizing $\displaystyle \frac{ds}{d\theta}=\psi$ in \eqref{rem-k_g}, we find when $\displaystyle s<\frac{2}{\sqrt{x_3-x_1}}F\left(\arcsin\sqrt{\frac{-x_1}{x_2-x_1}},\sqrt{\frac{x_2-x_1}{x_3-x_1}}\right)$,
 \begin{equation*}
   \psi=\cosh\left(\int^s_0\frac{\sqrt{-\mu}}{k_g(t)}dt\right)\frac{-k_g(s)}{\sqrt{-\mu}}.
 \end{equation*}
 Based on \cite{gr}, according to \eqref{solitoneq}, we have
 \begin{align*}
   \psi\;=\;&\cosh\left(\sqrt{-\mu}\int^{k_g(s)}_{x_1}\frac{1}{x\sqrt{x^3-\lambda x-\mu}}dx\right)\frac{-k_g(s)}{\sqrt{-\mu}}\\
       \;=\;&\cosh\left(\frac{2\sqrt{-\mu}}{x_1\sqrt{x_3-x_1}}\Pi\left(\arcsin\sqrt{\frac{k_g-x_1}{x_2-x_1}},\frac{x_2-x_1}{-x_1},\sqrt{\frac{x_2-x_1}{x_3-x_1}}\right)\right)\frac{-k_g(s)}{\sqrt{-\mu}},
 \end{align*}
 when $\displaystyle s<\frac{2}{\sqrt{x_3-x_1}}F\left(\arcsin\sqrt{\frac{-x_1}{x_2-x_1}},\sqrt{\frac{x_2-x_1}{x_3-x_1}}\right)$.
 \begin{rem} When $\mu<0$,  by specifying the initial conditions $\displaystyle \psi(0)=\frac{-x_1}{\sqrt{-\mu}}$ and $\psi_s(0)=0$, we find that the solution to equation \eqref{secod-ode} is given by
 \begin{equation*}
   \psi\sin\left(\int_0^s\frac{dt}{\psi(t)}+\frac{\pi}{2}\right)=-\frac{x_1+(x_2-x_1)\mathrm{sn}^2\left(\frac{\sqrt{x_3-x_1}}{2}s,\;\sqrt{\frac{x_2-x_1}{x_3-x_1}}\right)}{\sqrt{-\mu}}.
 \end{equation*}
 Alternatively, when $\displaystyle s<\frac{2}{\sqrt{x_3-x_1}}F\left(\arcsin\sqrt{\frac{-x_1}{x_2-x_1}},\sqrt{\frac{x_2-x_1}{x_3-x_1}}\right)$,
 this can be expressed as
 \begin{align*}
 \psi\;=\;&-\frac{1}{\sqrt{-\mu}}\cosh\left(\frac{2\sqrt{-\mu}}{x_1\sqrt{x_3-x_1}}\Pi\left(\phi,\frac{x_2-x_1}{-x_1},\sqrt{\frac{x_2-x_1}{x_3-x_1}}\right)\right)\times\\
          &\left(x_1+(x_2-x_1)\mathrm{sn}^2\left(\frac{\sqrt{x_3-x_1}}{2}s,\;\sqrt{\frac{x_2-x_1}{x_3-x_1}}\right)\right),
 \end{align*}
 where $\displaystyle \phi=\mathrm{am}\left(\frac{\sqrt{x_3-x_1}}{2}s, \;\sqrt{\frac{x_2-x_1}{x_3-x_1}}\right)$, and $\mathrm{am}$ denotes  the jacobi amplitude function.
\end{rem}
\begin{prop}\label{prop-b-space}
If the solitons rotate around the $z$-axis,, then $\psi$ is unbounded and its  infimum  is $0$.
\end{prop}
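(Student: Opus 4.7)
The plan is to mirror the argument used for Proposition \ref{prop-b-lignt}, replacing the parabolic one-parameter group of null rotations by the hyperbolic rotations \eqref{spacematrix}. The key observation is that iterating the matrix in \eqref{spacematrix} $n$ times amounts to replacing $\omega$ by $n\omega$. Applying this $n$-th iterate to a point $(\psi,\psi\cos\theta,\psi\sin\theta)$ of the initial period produces a new point on $LC^{*}$ whose first coordinate equals
\begin{equation*}
\psi_n(\theta) = \psi\bigl(\cosh(n\omega)+\sinh(n\omega)\cos\theta\bigr),
\end{equation*}
so the question reduces to studying the elementary function $f_n(\theta)=\cosh(n\omega)+\sinh(n\omega)\cos\theta$ together with the bounded positive factor $\psi$ from the first cycle.

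For unboundedness, I would pick a convenient angle, say $\theta_{0}=0$, where $f_n(0)=e^{n\omega}\to+\infty$, giving $\psi_n(\theta_0)\to +\infty$ as $n\to +\infty$. For the infimum, a direct differentiation $f_n'(\theta)=-\sinh(n\omega)\sin\theta$ identifies $\theta=\pi$ as the unique minimum on $[0,2\pi)$, with $f_n(\pi)=e^{-n\omega}$. Since the first-period solution $\psi$ is continuous, strictly positive on $LC^{*}$, and attains the finite positive value $-x_1/\sqrt{-\mu}$ (so is bounded between two positive constants on one cycle), multiplication by $f_n(\pi)\to 0$ produces a sequence of points on successive cycles along which $\psi$ tends to $0$. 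Together with the positivity $\psi>0$, this yields $\inf\psi=0$ (not attained).

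The main subtlety I anticipate is ensuring that the relevant rotation parameter $\omega$ is genuinely non-zero, so that the iterates actually disperse rather than revisit the same fibre. This is automatic from the fact that the soliton is non-closed in the space-like case (numerically illustrated in Fig.\ \ref{Fig:0lambda} and reflected in the analysis following Proposition \ref{prop-b-lignt}); more formally, if $\omega$ were zero, the full trajectory would coincide with its first cycle and $\mu<0$ would force closure, contradicting the rotation axis being space-like. A small bookkeeping point is verifying that the images still lie on the right half-cone $LC^{*}$, which is immediate from $f_n(\theta)\geq e^{-n|\omega|}>0$.
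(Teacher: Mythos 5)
Your proposal is correct and essentially reproduces the paper's own proof: apply the $n$-th iterate of the hyperbolic rotation \eqref{spacematrix}, observe that the first coordinate of a first-cycle point is multiplied by $\cosh(n\omega)+\sinh(n\omega)\cos\theta$, and evaluate at $\theta=\pi$ (factor $e^{-n\omega}\to 0$, giving infimum $0$) and at a point with $\cos\theta=1$ (factor $e^{n\omega}\to+\infty$, giving unboundedness). The only cosmetic difference is your choice $\theta_0=0$, which does not lie in the first period's angular range $\pi/2\leq\theta<3\pi$; the paper uses $\theta_0=2\pi$, which is on the first cycle and yields the identical factor, so your argument goes through after this trivial relabeling.
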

\begin{proof}
 By applying the rotation matrix given in \eqref{spacematrix}, after $n$ cycles, points $(\psi,\psi\cos\theta,\psi\sin\theta)$  in the first cycle are rotated to the points
 \begin{align*}
 \Bigg(\cosh(n\omega)+\sinh(n\omega)\cos\theta,\sinh(n\omega)+\cosh(n\omega)\cos\theta, \sin\theta\Bigg)\psi.
 \end{align*}
 To analyze the behavior of $\psi$, we consider specific values of $\theta$.
For $\theta_0=\pi$, $\psi_n=\exp{\left(-n\omega\right)}\psi_0$, where $\psi_0$ is the value of $\psi$ at $\theta=\theta_0$ in the first period.  As $n\to+\infty$, $\psi_n\to 0$.
For $\theta_0=2\pi$, $\psi_n=\exp{\left(n\omega\right)}\psi_0$, where again $\psi_0$ is the value of $\psi$ at $\theta=\theta_0$ in the first period.  As $n\to+\infty$, $\psi_n\to +\infty$.
Thus, we have shown that $\psi$ is unbounded and its infimum is $0$, as desired.
\end{proof}

Given the initial condition $\displaystyle \theta(0)=\frac{\pi}{2}$, and following the preceding proof, we arrive at the subsequent expressions
\begin{align*}
  \sin\theta(nT)=\frac{1}{\cosh(n\omega)},\quad \cos\theta(nT)=\frac{\sinh(n\omega)}{\cosh(n\omega)}.
\end{align*}
From these derived expressions, we formulate the following corollary.
\begin{cor}\label{cor-space}
When solitons rotate around the $z$-axis with the initial condition $\displaystyle \theta(0)=\frac{\pi}{2}$, it holds that  $\displaystyle 2n\pi<\theta(nT)<\frac{(4n+1)\pi}{2}$.
The angle $\theta(nT)-2n\pi$ decreases monotonically as $n$ increases, and as the integer $n$ approaches positive infinity, $\theta(nT)-2n\pi$ converges to zero.
\end{cor}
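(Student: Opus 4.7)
The plan is to derive every assertion in the corollary directly from the two closed-form identities
$$\sin\theta(nT) = \frac{1}{\cosh(n\omega)}, \qquad \cos\theta(nT) = \frac{\sinh(n\omega)}{\cosh(n\omega)},$$
which are displayed immediately before the statement. First, I would observe that for every $n \geq 1$ both quantities are strictly positive (assuming $\omega > 0$, which is the natural orientation choice coming from the space-like boost in \eqref{spacematrix}), so $\theta(nT)$ reduces modulo $2\pi$ to a value in the open first quadrant $(0, \pi/2)$.

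Second, I would promote this residue information to a statement about the true value of $\theta(nT)$. For this I would use (i) the monotonicity of $\theta(s)$ in arc length, which follows at once from $d\theta/ds = 1/\psi > 0$ in \eqref{rem-k_g}, and (ii) the structural observation recorded just above the corollary that $k_g$ has two zeros at $\theta = \pi$ and $\theta = 2\pi$ inside each curvature period. Starting from $\theta(0) = \pi/2$ and moving monotonically through the first period of length $T$ must therefore sweep past both $\pi$ and $2\pi$, so $\theta(T) > 2\pi$. Combined with the residue calculation this pins $\theta(T) \in (2\pi, 5\pi/2)$. An inductive iteration — replacing the initial segment by its image under $n$-fold application of the Lorentz boost in \eqref{spacematrix} — then yields $2n\pi < \theta(nT) < (4n+1)\pi/2$ for every positive integer $n$.

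Third, for the monotone decay of $\theta(nT) - 2n\pi$, I would simply divide the two identities to obtain
$$\tan\bigl(\theta(nT) - 2n\pi\bigr) = \frac{1}{\sinh(n\omega)}.$$
The right-hand side is strictly decreasing in $n$ and tends to $0$ as $n \to +\infty$. Because $\tan$ is strictly increasing on $(0, \pi/2)$, and the argument lies in that interval by the previous step, both the strict monotonic decrease and the vanishing limit transfer verbatim to $\theta(nT) - 2n\pi$.

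The principal obstacle is the lifting in the second paragraph: going from the value of $\theta(nT)\bmod 2\pi$ to the correct representative requires committing to the geometric picture in which the soliton is generated by repeatedly boosting the first-period arc, and in which $\theta$ is known to advance by strictly more than $2\pi$ but strictly less than $5\pi/2$ in one period. Once that bookkeeping is in place, the trigonometric manipulation that produces monotonicity and the limit is elementary.
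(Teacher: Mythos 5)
Your proposal is correct and follows essentially the same route as the paper: the corollary is read off from the displayed identities $\sin\theta(nT)=1/\cosh(n\omega)$, $\cos\theta(nT)=\sinh(n\omega)/\cosh(n\omega)$ obtained from the $n$-fold boost, exactly as you do, with the quotient $\tan(\theta(nT)-2n\pi)=1/\sinh(n\omega)$ giving monotone decay to zero. Your extra bookkeeping for lifting the residue to the correct branch (monotonicity of $\theta$ in $s$ plus the first-period range $\theta(T)\in(2\pi,5\pi/2)$ propagated by the boost) is just a more explicit version of what the paper leaves implicit.
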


\begin{rem} With the initial conditions  $\displaystyle\psi(0)=\frac{-x_1}{\sqrt{-\mu}}$ and $\psi_s(0)=0$, our numerical calculations (depicted in the right-hand side of  Fig. \ref{Fig:0lambda}) reveal that
$\theta(T,\lambda)$ exhibits a monotonically increasing behavior with respect to $\lambda$.
Additionally, we observe the following asymptotic behavior:  $\theta(T,\lambda)\to2\pi$ as $\displaystyle \lambda\to3\left(\mu/2\right)^{2/3}$, and $\theta(T,\lambda)\to\frac{5\pi}{2}$ as $\displaystyle \lambda\to+\infty$.
\end{rem}

\subsection{Proof of Theorem \ref{thm-ode}}\label{subsec-ode}
Based on previous insights, we have developed a method for solving the differential equation \eqref{secod-ode}.
\begin{proof}[Proof of Theorem \ref{thm-ode}]
  Through meticulous computation, we verify that
  \begin{align*}
     2ff_{ss}-f_s^2-2f^3=x_1x_2x_3.
  \end{align*}
  Following the guidance provided by previous hints, we postulate that the solution can be formulated as
  \begin{align*}
     \psi(s)=f(s)G(\theta),
  \end{align*}
  and $\theta$ is defined through the differential relationship $\displaystyle \frac{d\theta}{ds}=\frac{1}{\psi}$.
Consequently, this second-order differential equation can be rewritten as
\begin{align*}
  G_{\theta\theta}-\frac{3G_{\theta}^2}{2G}-\frac{G}{2}+\frac{G^3}{2}x_1x_2x_3=0.
\end{align*}
Utilizing Proposition \ref{prop-plane}, we are able to draw the desired conclusions.
\end{proof}

\noindent {\bf Data availability} No data availability statement is required, as no experimental data is involved.

\noindent{\bf Conflict of interest} The authors have no conflict of interest to declare that are relevant to the content of this article.


\end{document}